\numberwithin{equation}{section}
\theoremstyle{plain}
\newtheorem{thm}{Theorem}[section]
\newtheorem{lem}{Lemma}[section]
\theoremstyle{definition}
\newtheorem{definition}{Definition}[section]
\newtheorem{assumption}{Assumption}[section]
\newtheorem{remark}{Remark}[section]
\numberwithin{equation}{section}
\begin{document}

\begin{frontmatter}
\title{Asymptotic Minimaxity, Optimal Posterior Concentration and Asymptotic Bayes Optimality of Horseshoe-type Priors Under Sparsity}

\runtitle{Asymptotic Minimaxity and ABOS of Horseshoe-type Priors Under Sparsity}
%

\begin{aug}
\author{\fnms{Prasenjit} \snm{Ghosh}\ead[label=e1]{prasenjit$\_$r@isical.ac.in}}
\and
\author{\fnms{Arijit} \snm{Chakrabarti}\ead[label=e2]{arc@isical.ac.in}}

\runauthor{P. Ghosh and A. Chakrabarti}

\affiliation{Indian Statistical Institute}

\address{Applied Statistics Unit, Indian Statistical Institute, Kolkata, India.\\
\printead{e1},
\printead*{e2}}

\end{aug}

\begin{abstract}
In this article, we consider a general class of global-local scale mixtures of normals that is rich enough to include a wide variety of one-group shrinkage priors, such as the three parameter beta normal mixtures, the generalized double Pareto priors, and in particular, the horseshoe prior, the Strawderman$-$Berger prior and the normal$-$exponential$-$gamma priors. Asymptotic minimaxity of Bayes estimates under the $l_2$ norm and posterior concentration properties corresponding to this general class of priors are studied when the data is assumed to be generated according to a multivariate normal distribution with a fixed unknown mean vector that is assumed to be sparse. Assuming that the number of non-zero means is known, we show that Bayes estimators based on this general class of shrinkage priors attain the minimax risk in the $l_2$ norm (up to some multiplicative constants). In particular, for the ``horseshoe-type'' priors, such as the horseshoe, the Strawderman$-$Berger and the standard double Pareto priors, the corresponding Bayes estimates are shown to attain the minimax $l_2$ risk asymptotically up to the correct constant. Moreover, it is shown that posterior distributions based on the chosen class of priors contract around the true mean vector at the minimax $l_2$ rate and around the corresponding Bayes estimates at least as fast as the minimax rate for a wide range of choices of the global shrinkage parameter. We also provide a lower found to the total posterior spread corresponding to an important sub-family of the ``horseshoe-type'' priors, which gives important insight regarding the choice of the global shrinkage parameter for this sub-family of priors. This part of this article is inspired by the work of \citet{PKV2014}. We come up with novel unifying arguments and extend their results over the general class of one-group priors under study. In the process, we settle a conjecture made in \citet{BPPD2012} regarding such optimal posterior contraction properties of the normal$-$exponential$-$gamma and the generalized double Pareto priors.\newline

In the second half of this article, we focus on the problem of simultaneous testing for the means of independent normal observations. We consider a Bayesian decision theoretic framework, where the overall loss is assumed to be the number of misclassified hypotheses and the data is modeled through a two-groups normal mixture distribution. We adopt the asymptotic framework of \cite{BCFG2011} who introduced the notion of asymptotic Bayes optimality under sparsity (ABOS) in multiple hypothesis testing. The multiple testing procedures under study are induced by the general class of shrinkage priors considered in this paper and were also considered in \citet{GTGC2015}. Using one key technique essential for proving the aforesaid asymptotic minimaxity result, we obtain exact asymptotic expressions of the Bayes risks of such induced decisions. A remarkable consequence of this is that such induced testing procedures based on the ``horseshoe-type'' priors are asymptotically Bayes optimal under sparsity (ABOS). This, to the best of our knowledge, is the first result in the literature where in a sparse problem, a class of one-group prior distributions is shown to achieve asymptotically the optimal two-groups solution up to the correct constant. 
\end{abstract}

\begin{keyword}
\kwd{Posterior concentration}
\kwd{minimaxity}
\kwd{ABOS}
\kwd{three parameter beta}
\kwd{horseshoe}
\kwd{normal-exponential-gamma}
\kwd{Strawderman$-$Berger}
\kwd{generalized double Pareto}
\end{keyword}
\end{frontmatter}

\section[Introduction]{Introduction}
With rapid advancements in modern technology and computing facilities, high throughput data have become common in real life problems across diverse scientific fields such as genomics, biology, medicine, cosmology, finance, economics and climate studies etc. As a result, inferential problems involving a large number of unknown parameters are coming to the fore. Problems where the number of unknown parameters grows as least as fast as the number of observations are typically called high-dimensional. In such problems, often times only a few of these parameters are of real importance. For example, in a high-dimensional regression problem, the proportion of non-zero regressors or regressors with non-zero coefficients is often small compared to the total number of candidate regressors. This is called the phenomenon of sparsity. A common Bayesian approach to model data of this kind is to use a two-component point mass mixture prior for the parameters under consideration. Such priors put a positive mass at zero to model the null entries and a continuous distribution to identify the non-null effects. These are also referred to as ``spike and slab priors'' or two-groups priors. This is a very natural way of modeling data of this kind from a Bayesian view point. See \citet{JS2004} and \citet{Efron2004} in this context.\newline

Use of the two-groups prior, although very natural, often poses a very daunting task computationally, especially in high-dimensional problems. Sometimes it is also possible that most of the parameters are very close to zero, but not exactly equal to zero. So in such a case a continuous prior may be able to capture sparsity in a more flexible manner. Due to these reasons, significant efforts have gone into modeling sparse high-dimensional data in recent times through hierarchical one-group continuous priors, which are also called one-group shrinkage priors. Bayesian analysis for such one-group priors are computationally much more tractable compared to the two-groups priors using MCMC techniques. A great variety of such one-group continuous shrinkage priors have appeared in the literature over the years. Some notable early examples are the $t$-prior in \citet{Tipping2001}, the double$-$exponential prior in \citet{PC2008} and \citet{Hans2009}, and the normal$-$exponential$-$gamma priors in \citet{GB2005}. \citet{CPS2009, CPS2010} introduced the horseshoe prior, which has very appealing properties. Subsequently, many other one-group priors have been proposed in the literature, e.g, in \citet{PS2011, PS2012}, \citet{ADC2011}, \citet{ADL2012} and \citet{GB2010, GB2012, GB2013}. The class of ``three parameter beta normal'' mixture priors was introduced in \citet{ADC2011}, while the ``generalized double Pareto'' class of priors was introduced by \citet{ADL2012}. The three parameter beta normal mixture family of priors contains among others the horseshoe, the Strawderman$-$Berger and the normal$-$exponential$-$gamma priors. A different class of one-group shrinkage priors referred to as the Dirichlet$-$Laplace (DL) priors has been introduced in \citet{BPPD2012, BPPD2014}. Almost all such one-group priors can be expressed as multivariate scale-mixtures of normals by employing two levels of parameters to express the prior variances for the parameters under consideration, referred to as the ``local shrinkage parameters'' and a ``global shrinkage parameter''. While the global shrinkage parameter accounts for the overall sparsity in the data, the local shrinkage parameters are helpful in detecting the obvious signals. Many of these priors are capable of handling sparsity by assigning a significant chunk of probability around zero, while at the same time they have tails which are heavy enough to ensure a priori large probabilities for the occurrence of large signals. As a result, noise observations are shrunk towards zero, while large observations almost remain unshrunk by such priors. The latter property is referred to as the ``tail robustness'' property and the corresponding one-group priors are called ``tail robust'' priors.\newline

In recent times, researchers have started to investigate various optimality properties of estimators and testing rules based on one-group shrinkage priors, where the unknown mean or coefficient vectors are modeled through such hierarchical  one-group formulation. \citet{DG2013} showed a near oracle optimality property of multiple testing rules due to \citet{CPS2010} based on the horseshoe prior in the context of multiple testing. \citet{GTGC2015} extended the work of \citet{DG2013} for a broad class of one-group tail robust shrinkage priors that includes the horseshoe in particular. \citet{BPPD2012, BPPD2014} showed that for the estimation of a sparse multivariate normal mean vector under the quadratic loss function, the posterior arising from their proposed Dirichlet$-$Laplace prior contracts around the true mean vector at the minimax optimal rate with respect to the $l_2-$norm for an appropriate choice of the underlying Dirichlet concentration parameter. In a recent article, \citet{PKV2014} showed that for the recovery of a sparse normal mean vector, the horseshoe estimator asymptotically achieves the minimax risk with respect to the $l_2$ loss up to a multiplicative constant and the corresponding posterior distribution contracts around the true mean vector at this minimax optimal rate. In addition, they showed that the posterior distribution based on the horseshoe prior contracts around the horseshoe estimator at least as fast as the minimax rate. See \citet{BRT2009} and \citet{CSHV2014} where such issues were investigated for the Bayesian Lasso prior. \newline

It is worth mentioning in this context that studying the concentration properties of posterior distributions has several important aspects in Bayesian inference. {\it Firstly}, it is often of interest to know whether a posterior distribution puts adequate mass around the neighborhood of the true distribution. For the sparse normal means model such as those considered in \citet{BPPD2014} and \citet{PKV2014}, one typically needs to show that the resulting posterior distribution puts probability mass 1 on balls centered around the true mean vector and having square radius proportional to the minimax risk under the $l_2$ norm. {\it Secondly}, for realistic uncertainty quantification, it is necessary that a posterior distribution contracts around its center at the same rate at which it converges to the true parameter value. See \citet{PKV2014} in this context. As commented in \citet{CV2012} that although construction of prior distributions in Bayesian inference is not driven by the ultimate goal of producing posterior distributions which have optimal concentration properties in terms of the frequentist minimax risk, for theoretical investigations, the minimax rate can be taken as a benchmark.\newline

A question which is therefore natural to ask and also posed in \citet{PKV2014} is what aspects of one-group shrinkage priors are essential towards attaining optimal posterior concentration properties. \citet{PKV2014} also raised an important question that whether a sharp pick at the origin, combined with a thick tail, is essential for achieving such optimal concentration properties. \citet{BPPD2012} conjectured that heavy tailed prior distributions, such as the horseshoe, the normal$-$exponential$-$gamma and the generalized double Pareto priors, should possess optimal posterior contraction properties in terms of the minimax $l_2$ risk. This has already been established for the horseshoe prior by \citet{PKV2014}, while the case for the other two families of prior distributions remain unanswered. In an insightful article, \citet{PS2011} argued that, in sparse high-dimensional problems, one should choose the prior distribution corresponding to the local shrinkage parameters to be appropriately heavy-tailed so that large signals can escape the ``gravitational pull'' of the global variance component and are almost left unshrunk which is essential for the recovery and identification of true signals.\newline 

This motivates us to consider in the first half of this article, the problem of estimating a sparse multivariate normal mean vector based on a very general class of tail robust one-group prior distributions. The aforesaid general class is rich enough to include a wide variety of one-group shrinkage priors, such as the three parameter beta normal mixtures, the generalized double Pareto priors, the inverse-gamma priors, the half-t priors and many more. We work under the same framework as in \citet{PKV2014} and assume that the number of non-zero entries of the true mean vector is known. Our goal here is to study asymptotic risk properties of Bayes estimates based on the general class of one-group priors under consideration and also to investigate the concentration properties of the corresponding posterior distributions in terms of the minimax optimal rate under the $l_2$ norm. For that we take the global shrinkage parameter as a tuning parameter that we are free to choose depending on the proportion of non-null means. It is shown that when the underlying true mean vector is sparse in the nearly-black sense, the mean square errors of Bayes estimates based on such tail robust priors, are within a constant factor of the corresponding minimax risk with respect to the $l_2-$norm, asymptotically. In particular, it is shown that for the ``horseshoe-type'' priors (to be defined in Section 2), such as, the three parameter beta normal mixtures with parameters $a=0.5$, $b>0$ (e.g. the horseshoe and the Strawderman$-$Berger priors), the generalized double Pareto priors with shape parameter $\alpha=1$ (e.g. the standard double Pareto prior) and the inverse-gamma priors with shape parameter $\alpha=0.5$, the corresponding Bayes estimates are asymptotically minimax in the sense that they asymptotically attain the minimax $l_2$ risk up to the correct constant, provided the global shrinkage parameter is asymptotically of the order of the proportion of non-null means or up to some logarithmic factor of it. This, to the best of our knowledge, is the first result in the literature which gives asymptotic minimaxity property up to the correct constant of Bayes estimates arising out of such one-group priors. Till now, such result is known only for the horseshoe prior, and that too, only up to a multiplicative constant. This provides a useful guideline while deciding over the choice of such one-group priors for the recovery of sparse signals.\newline

It is further shown that for a wide range of values of the global shrinkage parameter depending on the proportion of non-null means, the posterior distributions arising out of our general class of one-group priors under study, contract around the true mean vector at the minimax optimal rate under the $l_2$ norm and around the corresponding Bayes estimates at least as fast as this minimax rate for a broad range of choices of the global shrinkage parameter depending on the proportion of non-null means. Moreover, we also derive a lower found to the total posterior spread corresponding to an important sub-family of the ``horseshoe-type'' priors, which provides important insight regarding the choice of the global shrinkage parameter for this sub-family of priors. A major contribution of our theoretical investigation is to show that shrinkage priors which are appropriately heavy-tailed, are able to attain the minimax optimal rate of contraction and one does not need a sharp peak at the origin, provided the global tuning parameter is chosen carefully. We provide some novel unifying arguments that work for this general class under study and extend the work of \citet{PKV2014} over this class. As an immediate consequence of our general theoretical results, we settle the conjecture of \citet{BPPD2012} regarding such optimal posterior concentration properties of the normal$-$exponential$-$gamma and the generalized double Pareto priors.\newline

One key technique used for proving the aforesaid asymptotic minimaxity property turns out to be very handy in the study of asymptotic optimality (in a Bayesian decision theoretic framework) of simultaneous hypothesis testing for independent normal means using one-group shrinkage priors. The works of \citet{DG2013} and \citet{GTGC2015} already mentioned before established near asymptotic optimality properties of multiple testing rules based on one-group shrinkage priors when applied to data generated from a two-groups normal mixture model. Using an additive loss function, they showed that such multiple testing rules asymptotically attain the optimal Bayes risk for the corresponding two-groups formulation, up to a certain multiplicative factor. This indicates that the one-group solution can be a reasonable approximation asymptotically in the two-groups problem when the underlying mean vector is sparse. However, an interesting question that remains unanswered in these papers is whether such multiple testing procedures can asymptotically attain the optimal Bayes risk up to the correct constant. This question can be addressed satisfactorily by using the key technique mentioned before. We obtain exact asymptotic expressions of the Bayes risks of such multiple testing rules using some novel unifying arguments based on the aforesaid technique. It turns out that the answer is indeed in the affirmative for the ``horseshoe-type'' priors, whereby such decisions rules become asymptotically Bayes optimal under sparsity (ABOS), a notion that was introduced by \citet{BCFG2011} in the context of multiple testing. This finding is remarkable since it is the first result of its kind where the one-group answer achieves exactly the optimal performance asymptotically when applied to a two-groups formulation. Moreover, it is theoretically established that for the present multiple testing problem, when the global shrinkage parameter is treated as a tuning parameter only, its optimal choice corresponding to the horseshoe-type priors, should be asymptotically of the order of the proportion of true alternatives, provided this proportion is known. \newline

We organize the paper as follows. In Section 2, we describe the problem of estimating a sparse normal mean vector and the general class of tail robust shrinkage priors considered in this work. Section 3 contains the theoretical results involving asymptotic minimaxity of Bayes estimators arising out of this general class of shrinkage priors under study and the corresponding posterior contraction results. Section 4 contains the theoretical results on asymptotic Bayes optimality under sparsity of the induced multiple testing procedures under study, followed by some concluding remarks in Section 5. Proofs of the main theorems and other supporting results are given in the Appendix.

\subsection*{Notations and Definition}
We adopt the same convention of notation used in \citet{PKV2014} and \citet{GTGC2015}. Let $\{A_n\}$ and $\{B_n\}$ be any two sequences of non-negative real numbers indexed by $n$, such that, $B_n \neq 0$ for all sufficiently large $n$. We write $A_n\asymp B_n$ to denote $0 < \lim_{n\rightarrow\infty} \inf_{n} \frac{A_n}{B_n} \leqslant \lim_{n\rightarrow\infty} \sup_{n} \frac{A_n}{B_n} < \infty$ and $A_n \lesssim B_n $ to denote that there exists some constant $c > 0$ independent of $n$ such that $A_n \leqslant cB_n$, provided $n$ is sufficiently large. If $\lim_{n\rightarrow \infty} A_n/B_n=1$ we write it as $A_n \sim B_n$. Moreover, if $|\frac{A_n}{B_n}| \leq D $ for all sufficiently $n$ it is written as $A_n=O(B_n)$, where $D>0$ is some positive real number independent of $n$, and if $\lim_{n\rightarrow\infty}A_n/B_n=0$, it is expressed as $A_n=o(B_n)$. Thus $A_n=o(1)$ if $\lim_{n\rightarrow\infty}A_n=0$. Given any two non-negative real valued functions $f(x)$ and $g(x)$, both having a common domain of definition $(A,\infty)$, $A \geq 0$, such that $g(x)\neq 0$ for all sufficiently large $x$, we write $f(x) \sim g(x)$ as $x \rightarrow \infty$ to denote $\lim_{x \rightarrow \infty} f(x)/g(x)=1$.\newline

Throughout this paper, $Z$ is used to denote a $N(0,1)$ random variable having cumulative distribution function and probability density function $\Phi(\cdot)$ and $\phi(\cdot)$, respectively.

\begin{definition}\label{DEFN_SVF}
A positive measurable function $L$ defined over some $(A,\infty)$, $A \geqslant 0$, is said to be slowly varying (in Karamata's sense) if for each fixed $\alpha>0$, $L(\alpha x)\sim L(x)$ as $x\rightarrow\infty$.
\end{definition}

\section{Estimation of a Sparse Normal Mean Vector and a General Class of One-Group Tail Robust Priors}

Suppose that we observe an n-component random observation $(X_1,\dots,X_n) \in \mathbb{R}^n$, such that
\begin{equation}
 X_i = \theta_i + \epsilon_i  \mbox{  for } i =1,\dots,n, \label{NORMAL_MEANS_MODEL}
\end{equation}
where the unknown parameters $\theta_1,\dots,\theta_n$ denote the effects under investigation and $\epsilon=(\epsilon_1,\dots,\epsilon_n) \sim N_{n}(0, I_n)$.\newline

Let $l_0[q_n]$ denote the subset of $\mathbb{R}^n$ given by,
\begin{equation}
l_0[q_n] = \{\theta \in \mathbb{R}^n : \#(1 \leqslant j \leqslant n : \theta_j \neq 0) \leqslant q_n\}. 
\end{equation}
 
Suppose we want to estimate the unknown mean vector $\theta=(\theta_1,\dots,\theta_n)$. For that we assume $\theta$ to be sparse in the ``nearly black sense'', that is, $\theta \in l_0[q_n] $ with $q_n=o(n)$ as $n\rightarrow\infty$. Let $\theta_0=(\theta_{01},\dots,\theta_{0n})$ denote the true mean vector. Then the corresponding minimax rate with respect to the $l_2$-norm is given by (see \citet{DJHS1992}),
\begin{eqnarray}
 \inf_{\hat{\theta}}\sup_{\theta_{0}\in l_{0}[q_{n}]}E_{\theta_{0}}||\hat{\theta}-\theta_{0}||^2=2q_n\log\big(\frac{n}{q_n}\big)(1+o(1)),\mbox{ as } n\rightarrow\infty\label{MINIMAX_RATE}. 
 \end{eqnarray}
 In (\ref{MINIMAX_RATE}) and throughout this paper $E_{\theta_{0}}$ denotes an expectation with respect to the $N_{n}(\theta_0, I_n)$ distribution. Our goal is to find an estimate of $\theta$ from a Bayesian view point having some good theoretical properties. As stated already in the introduction that a natural Bayesian approach to model (\ref{NORMAL_MEANS_MODEL}) is to use a two-component point mass mixture prior for the $\theta_i$'s, given by,
\begin{equation}\label{TWO_GROUP_THETA}
 \theta_i \stackrel{i. i. d.}{\sim} (1-p)\delta_{\{0\}} + p\cdot f, \mbox{ $i=1,\dots,n.$}
 \end{equation} 
where $\delta_{\{0\}}$ denotes the distribution having probability mass 1 at the point 0, and $f$ denotes an absolutely continuous distribution over $\mathbb{R}$. Here the mixing proportion $p$ is often interpreted as the theoretical proportion of non-null $\theta_i$'s. See \citet{MB1988} and \citet{JS2004} in this context. It is usually recommended to choose a heavy tailed absolutely continuous distribution $f$ over $\mathbb{R}$ so that large observations can be recovered with higher degree of accuracy. \citet{JS2004} used a $t$ distribution in this context and used an empirical Bayes approach in order to estimate the unknown mixing proportion $p$ via the method of marginal maximum likelihood and showed that if the coordinate-wise posterior median estimate is used, the resulting estimator attains the minimax rate with respect to the $l_r$ loss, $r \in (0, 2]$. \citet{CV2012} studied the full Bayes approach where they found conditions on the two-groups prior that ensure contraction of the posterior distribution at the minimax rate. A comprehensive list of other empirical Bayes approaches using a two-groups model can be found in \citet{YL2005}, \citet{JZ2009}, \citet{CV2012},  \citet{MW2014} and references therein.\newline

 As already mentioned in the introduction that although the two groups prior (\ref{TWO_GROUP_THETA}) is considered to be the most natural formulation for handling sparsity from a Bayesian view point, it offers a daunting computational challenge in high-dimensional problems. Due to this reason, the one-group formulation to model sparse data has received considerable attention from researchers over the years. \citet{PS2011} showed that almost all such shrinkage priors can be expressed as multivariate scale-mixture of normals. In this article, we consider Bayes estimators based on a general class of one-group shrinkage priors given through the following hierarchical one-group formulation:
  \begin{align*}
  \begin{array}{ll}\label{ONE_GRP_PRIOR}
  X_i|\theta_i &\sim \mbox{ } N(\theta_i,1), \mbox{ independently for } i=1,\dots,n\\
  \theta_{i}|(\lambda_i^2, \tau^2) &\sim \mbox{ }N(0,\lambda_i^2\tau^2), \mbox{ independently for } i=1,\dots,n \nonumber\\
  \lambda_i^2 &\sim \mbox{ } \pi(\lambda_i^2), \mbox{ independently for } i=1,\dots,n
  \end{array}
 \end{align*} 
 with $\pi(\lambda_i^2)$ being given by,
 \begin{equation}
 \label{LAMBDA_PRIOR}
  \pi(\lambda_i^2) =K (\lambda_i^2)^{-a-1}L(\lambda_i^2),
 \end{equation}
 where $K \in (0,\infty)$ is the constant of proportionality, $a$ is a positive real number and $L:(0,\infty) \rightarrow (0,\infty)$ is a measurable, non-constant slowly varying function. For the theoretical development in this paper, we assume that the function $L(\cdot)$ in (\ref{LAMBDA_PRIOR}) satisfies the following:
\begin{assumption}\label{ASSUMPTION_LAMBDA_PRIOR}
\qquad
 \begin{enumerate}
 \item  $\lim_{t \rightarrow\infty}L(t) \in (0,\infty),$ that is, there some exists $c_{0}(>0)$ such that $L(t) \geqslant c_{0}$ for all $t\geqslant t_{0}$, for some $t_{0}>0$, which depends on both $L$ and $c_{0}$. 
 
 \item There exists some $0 < M < \infty $ such that $\sup_{t\in(0,\infty)}L(t) \leqslant M.$
\end{enumerate}
\end{assumption}

Each $\lambda_i^2$ is referred to as a local shrinkage parameter and the parameter $\tau$ is called the global shrinkage parameter. For estimation of the unknown mean vector in the present normal means model (\ref{NORMAL_MEANS_MODEL}), we treat $\tau$ as a tuning parameter that we are free to choose depending on the proportion of non-null means. From Theorem 1 of \citet{PS2011} it follows that the above general class of one-group priors will be ``tail-robust'' in the sense that for any given $\tau>0$, $E(\theta_i|X_i,\tau) \approx X_i,$ for large $X_i$'s. We would like to mention here that a very broad class of one-group shrinkage priors actually fall inside this general class. \citet{GTGC2015} established that the three parameter beta normal mixtures and the generalized double Pareto priors can be expressed in the above general form by showing that the corresponding prior distribution of the local shrinkage parameters can be written in the form given in (\ref{LAMBDA_PRIOR}) with the corresponding $L(\cdot)$ satisfying Assumption \ref{ASSUMPTION_LAMBDA_PRIOR}. It is easy to verify that some other well known shrinkage priors such as the families of inverse-gamma priors and the half-t priors are also covered by this general class of prior distributions under consideration. In case $a=0.5$ for the general class of shrinkage priors under study, we refer the corresponding one-group priors as the {\it horseshoe-type} priors. The class of horseshoe-type priors encompasses an important sub-family of the general class of one-group priors under study, such as, the three parameter beta normal mixtures with parameters $a=0.5$, $b>0$ (e.g. the horseshoe, the Strawderman$-$Berger), the generalized double Pareto priors with shape parameter $\alpha=1$ (e.g. the standard double Pareto), the inverse-gamma prior with shape parameter $\alpha=0.5$ and many other shrinkage priors.\newline


Now for a general global-local scale mixtures of normals, we have for each $i=1,\dots,n$,
   \begin{equation} \label{POST_MEAN_OG2}
   E(\theta_{i}|X_i,\tau)=(1-E(\kappa_i|X_i,\tau))X_i,
   \end{equation}
where $\kappa_i=1/(1+\lambda_i^2\tau^2)$ is called the $i-$th posterior shrinkage coefficient. The resulting posterior mean $E(\theta|X,\tau)=(E(\theta_{1}|X_1,\tau),\dots,E(\theta_{n}|X_n,\tau))$ will be the Bayes estimate used to recover $\theta$ and will be denoted by $T_{\tau}(X)$. For notational convenience, we shall denote $E(\theta_{i}|X_i,\tau)$ by $T_{\tau}(X_i)$. It will be shown in the next section that when the true mean vector $\theta_0$ is sparse in the ``nearly black sense'' and $a \in [0.5,1)$, the estimator $T_{\tau}(X)$ will asymptotically attain the minimax rate (\ref{MINIMAX_RATE}) with respect to the $l_2$ norm, up to some multiplicative constant and that the posterior distribution contracts around the true $\theta_0$ at this minimax optimal rate for suitably chosen values of $\tau$ depending on the proportion $q_n/n$. In particular, it will be shown that for the horseshoe-type priors, the resulting Bayes estimates will be asymptotically minimax in the sense that the ratio of the corresponding mean square error to the minimax risk in (\ref{MINIMAX_RATE}) asymptotically becomes 1 as the dimension $n$ grows to infinity.

\section{Asymptotic Minimaxity and Posterior Contraction Rates}
In this section, we present the theoretical results involving the mean square error for the Bayes estimates arising out of the general class of tail robust shrinkage priors under study, with $a\in [0.5,1)$, and the spread of the corresponding posterior distributions. It is assumed that the number of non-zero components $q_n$ of the unknown mean vector is known. Theorem \ref{THM_3.1} gives an upper bound on the mean square error for the Bayes estimates arising out of the general class one-group priors under consideration. Using this upper bound, it follows that for various choices of the global shrinkage parameter $\tau$, depending on the proportion of non-zero means $\frac{q_n}{n}$, the aforesaid Bayes estimates attain the minimax risk with respect to the $l_2$-norm up to some multiplicative constants. In particular, for the horseshoe-type priors, the corresponding Bayes estimates are shown to be asymptotically minimax. Theorem \ref{THM_3.2} gives an upper bound to the total posterior spread of our chosen one-group priors. Theorem \ref{THM_3.3} provides a sharp upper bound to the rate of contraction around the true mean vector for the posterior distributions arising out of the general class of shrinkage priors under study, which shows that such posterior distributions contract around the true mean vector at the minimax $l_2$ rate. Moreover, it also shows that these posterior distributions contract around the corresponding Bayes estimates at least as fast as the minimax optimal rate in the $l_2$ norm. Theorem \ref{THM_3.4} provides a lower bound to the total posterior variance for an important subclass of the horseshoe-type priors that gives more insight about the spread of such posterior distributions around the corresponding Bayes estimators for various choices of $\tau$. Proofs of most of these results are given in the Appendix. Although we build on certain ideas of the proofs of the main theorems of \cite{PKV2014}, we have to come up with novel unifying technical arguments that work for the kind of one-group priors studied in this paper. In particular, Lemmas \ref{LEMMA_MOST_IMP} - \ref{LEMMA_IMPORTANT_INTEGRAL}, given in the Appendix, which form the crux of the arguments for proving the main theoretical results, namely, Theorems \ref{THM_3.1} - \ref{THM_3.3}, are completely independent of the work of \citet{PKV2014}. However, proof of Theorem \ref{THM_3.4} follows using some key arguments of \citet{PKV2014}. Our work shows that some of the technical arguments used in \citet{PKV2014} can be used in greater generality.

\begin{thm}\label{THM_3.1}
Suppose $X \sim \mathcal{N}_{n}(\theta_{0},I_{n})$, where $\theta_{0} \in l_{0}[q_{n}]$. Consider the general class of shrinkage priors where the prior distribution of the local shrinkage parameters $\lambda_i^2$'s is given by (\ref{LAMBDA_PRIOR}) with $0.5 \leqslant a < 1$ and the corresponding slowly varying component $L(\cdot)$ satisfies Assumption \ref{ASSUMPTION_LAMBDA_PRIOR}. Then the corresponding Bayes estimate $T_{\tau}(X)$, based on this general class of shrinkage priors, satisfies
 \begin{equation}\label{MSE_BAYES_ESTIMATE}
 \sup_{\theta_{0}\in l_{0}[q_{n}]}E_{\theta_{0}}||T_{\tau}(X)-\theta_{0}||^2
 \lesssim q_n\log\big(\frac{1}{\tau^{2a}}\big)+(n-q_n)\tau^{2a}\sqrt{\log\big(\frac{1}{\tau^{2a}}\big)}
 \end{equation}
 if $\tau \rightarrow 0$, as $n\rightarrow\infty$, $q_n\rightarrow\infty$ and $q_n=o(n)$.
\end{thm}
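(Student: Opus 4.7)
The plan is a coordinate-wise decomposition of the $l_2$-risk. Since $T_\tau$ acts componentwise, with $I_1 = \{i : \theta_{0i}\neq 0\}$ (so $|I_1|\le q_n$) and $I_0 = \{1,\dots,n\}\setminus I_1$,
\begin{equation*}
E_{\theta_0}\|T_\tau(X)-\theta_0\|^2 \;=\; \sum_{i\in I_1} E_{\theta_{0i}}\bigl(T_\tau(X_i)-\theta_{0i}\bigr)^2 \;+\; \sum_{i\in I_0} E_0\,T_\tau(X_i)^2.
\end{equation*}
Bounding the right-hand side thus reduces to the two uniform univariate estimates
\begin{equation*}
\sup_{\mu\in\mathbb{R}} E_\mu\bigl(T_\tau(X)-\mu\bigr)^2 \;\lesssim\; \log(1/\tau^{2a}), \qquad E_0\,T_\tau(X)^2 \;\lesssim\; \tau^{2a}\sqrt{\log(1/\tau^{2a})},
\end{equation*}
since multiplying by $q_n$ and $n-q_n$ respectively delivers the right-hand side of (\ref{MSE_BAYES_ESTIMATE}).

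For the signal estimate I would decompose $T_\tau(X) - \mu = (X-\mu) - X\,E(\kappa|X,\tau)$, use $(u+v)^2 \le 2u^2 + 2v^2$ and $E_\mu(X-\mu)^2 = 1$, so it suffices to prove $\sup_\mu E_\mu[X^2(E(\kappa|X,\tau))^2] \lesssim \log(1/\tau^{2a})$. Re-parametrising through $\kappa = 1/(1+\lambda^2\tau^2)$ turns the prior into $\pi(\kappa)\propto \kappa^{a-1}(1-\kappa)^{-a-1}\tau^{2a}L((1-\kappa)/(\kappa\tau^2))$, and $E(\kappa|X,\tau)$ becomes a ratio of integrals of the form $\int_0^1 \kappa^{a-1+\delta}(1-\kappa)^{-a-1}\exp(-X^2\kappa/2)\,L(\cdot)\,d\kappa$ with $\delta\in\{0,1\}$. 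Karamata-type asymptotics, exploiting Assumption \ref{ASSUMPTION_LAMBDA_PRIOR} and $a\in[0.5,1)$, yield a pointwise bound of the form $E(\kappa|X,\tau) \lesssim \tau^{2a}|X|^{-2a}$ (up to slowly varying factors) once $|X|$ is bounded away from zero. Hence $|X\,E(\kappa|X,\tau)|$ is of order $\tau^{2a}|X|^{1-2a}$ for large $|X|$; combined with the trivial $\kappa\le 1$ bound for small $|X|$, integration against $\phi(\cdot-\mu)$ gives the claimed $\log(1/\tau^{2a})$ rate, uniformly in $\mu$.

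For the noise estimate, $T_\tau(X)^2 = X^2(1-E(\kappa|X,\tau))^2$ under $\theta_{0i}=0$. Tail robustness (Theorem 1 of \citet{PS2011}) already gives $1-E(\kappa|X,\tau) \to 0$ for moderate $X$ as $\tau\to 0$; quantitatively, the same Karamata-type analysis sketched above provides a uniform bound $1 - E(\kappa|x,\tau) \lesssim \tau^{2a}$ (up to slowly varying corrections) on any compact $x$-window. I would split $\int x^2 (1-E(\kappa|x,\tau))^2\phi(x)\,dx$ at $\zeta_\tau := \sqrt{2\log(1/\tau^{2a})}$: on $|x|\le \zeta_\tau$ the prior-driven bound gives a contribution of order $\tau^{4a}\zeta_\tau^2$ times a slowly varying factor, while on $|x|>\zeta_\tau$ one uses $(1-E(\kappa|x,\tau))^2 \le 1$ and the Gaussian tail $P(|Z|>\zeta_\tau)\lesssim \tau^{2a}/\zeta_\tau$ (combined with $E[X^2 \mathbf 1_{|X|>\zeta_\tau}]\lesssim \tau^{2a}\zeta_\tau$). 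The dominant term $\tau^{2a}\zeta_\tau$ matches the target $\tau^{2a}\sqrt{\log(1/\tau^{2a})}$.

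The hard part is the uniform-in-$X$ sharp asymptotic analysis of $E(\kappa|X,\tau)$ as $\tau\downarrow 0$ under only slow variation of $L$. Specifically one needs two-sided estimates for integrals $\int_0^1 \kappa^{a-1+\delta}(1-\kappa)^{-a-1} e^{-X^2\kappa/2} L((1-\kappa)/(\kappa\tau^2))\,d\kappa$ that respect the transition at $|X|\asymp \zeta_\tau$ and the boundary behaviour at $\kappa\to 1^-$ (where $L$ is evaluated at very small arguments, requiring its boundedness, whereas at $\kappa\to 0^+$ it is evaluated at very large arguments, where its positive limit matters). This is exactly what the technical lemmas in the Appendix (stated for the slowly varying $L$ in Assumption \ref{ASSUMPTION_LAMBDA_PRIOR}) are designed to deliver, and constitutes the principal novel technical work; once those estimates are in hand, the two-paragraph reduction above completes the proof.
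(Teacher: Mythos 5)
Your overall architecture is the same as the paper's (coordinate-wise split into signal and noise indices; signal term reduced to $E_{\mu}\bigl[X^{2}E(\kappa|X,\tau)^{2}\bigr]$ via $T_{\tau}(X)-X=-XE(\kappa|X,\tau)$; noise term split at $\zeta_{\tau}=\sqrt{2\log(1/\tau^{2a})}$ with a moment bound inside and Gaussian tails outside), but the two quantitative estimates you assert are both wrong in the form stated, and one of them would break the proof. The claim $E(\kappa|X,\tau)\lesssim\tau^{2a}|X|^{-2a}$ ``once $|X|$ is bounded away from zero'' is false: for any \emph{fixed} $x\neq 0$, the posterior of $\kappa$ concentrates at $1$ as $\tau\downarrow 0$, so $E(\kappa|x,\tau)\to 1$. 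A self-check: if your bound held for $|X|\geqslant 1$, then for $a\geqslant 1/2$ you would get $\sup_{\mu}E_{\mu}\bigl[X^{2}E(\kappa|X,\tau)^{2}\bigr]=O(1)$, hence a signal contribution $O(q_{n})$, contradicting the minimax lower bound (\ref{MINIMAX_RATE}). The decay of $xE(\kappa|x,\tau)$ only sets in beyond the $\tau$-dependent threshold $|x|>\sqrt{\rho\log(1/\tau^{2a})}$ with $\rho>2$ (this is exactly the content of Lemma \ref{LEMMA_MOST_IMP}, via the concentration inequality $\Pr(\kappa>\eta|x,\tau)\lesssim e^{-\eta(1-\delta)x^{2}/2}/\tau^{2a}$, which is useless unless $x^{2}\gtrsim\frac{2}{\eta(1-\delta)}\log(1/\tau^{2a})$). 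The term $q_{n}\log(1/\tau^{2a})$ in (\ref{MSE_BAYES_ESTIMATE}) is produced precisely by the trivial bound $|xE(\kappa|x,\tau)|\leqslant|x|\leqslant\rho\zeta_{\tau}$ on the \emph{growing} near-zone, not by any decay estimate; your sketch has the source of the logarithm in the wrong place.

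The noise part has a parallel, though less damaging, problem. The bound $1-E(\kappa|x,\tau)\lesssim\tau^{2a}$ is valid on a \emph{fixed} compact window, but you apply it on $|x|\leqslant\zeta_{\tau}$, and near $|x|=\zeta_{\tau}$ (the detection threshold) $1-E(\kappa|x,\tau)$ is of order $1$, not $\tau^{2a}$. The correct uniform statement is Lemma \ref{LEM_MOMENT_INEQ}, $E(1-\kappa|x,\tau)\lesssim\tau^{2a}e^{x^{2}/2}$, and with it the inner-window contribution is
$\tau^{4a}\int_{-\zeta_{\tau}}^{\zeta_{\tau}}x^{2}e^{x^{2}}\phi(x)\,dx\asymp\tau^{4a}\zeta_{\tau}e^{\zeta_{\tau}^{2}/2}=\tau^{2a}\zeta_{\tau}$,
i.e.\ the same order as the tail term, not the $\tau^{4a}\zeta_{\tau}^{2}$ you report. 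Your final bound survives only because the tail term happens to already be $\tau^{2a}\zeta_{\tau}$; the inner computation as written is incorrect. So the reduction is sound and matches the paper, but the ``technical lemmas'' you defer to must have the threshold-dependent forms above (Lemmas \ref{LEM_MOMENT_INEQ}--\ref{LEMMA_MOST_IMP}), not the ones you state.
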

\begin{proof}
 See Appendix.
\end{proof}

Using Theorem \ref{THM_3.1}, it immediately follows that Bayes estimators arising out of the general class of shrinkage priors under study with $a\in[0.5,1)$, attain the minimax risk with respect to the $l_2-$norm, possibly up to some multiplicative factors, for various choices of the global shrinkage parameter $\tau$. To see this, let us first fix any constant $c>1$ and choose any $\rho >c$ in the proof of Theorem \ref{THM_3.1} (see Appendix). Then the corresponding multiplicative factor before the upper bound in (\ref{MSE_BAYES_ESTIMATE}) can at most be $4a\rho^2$. Now taking $\tau=(q_n/n)^{\alpha}$, $\alpha \geqslant 1$, or $\tau=(q_n/n)\sqrt{\log(n/q_n)}$ in (\ref{MSE_BAYES_ESTIMATE}), it follows that the corresponding mean square error can at most be of the order of $2q_n\log\big(\frac{n}{q_n}\big)$ up to the multiplicative factor $2a\rho^2\max\{1,\alpha\}$, while it is always bounded below by the minimax $l_2-$risk which is of the order $2q_n\log\big(\frac{n}{q_n}\big)$. Clearly,
 \begin{equation}\label{BAYES_ESTIMATE_MINIMAXITY}
 \sup_{\theta_{0}\in l_{0}[q_{n}]}E_{\theta_{0}}||T_{\tau}(X)-\theta_{0}||^2
 \asymp q_n\log\big(\frac{n}{q_n}\big).
 \end{equation}

 However, a more interesting and remarkable consequence of Theorem \ref{THM_3.1} is the following. Consider the family of horseshoe-type priors for which one has $a=0.5$ and let us take $\tau=q_n/n$ or $\tau=(q_n/n) \sqrt{\log(n/q_n)}$ in (\ref{MSE_BAYES_ESTIMATE}). Then using the proof of Theorem \ref{THM_3.1}, we have,
 \begin{equation}\label{MSE_HSTYPE_UB_RHO}
 \sup_{\theta_{0}\in l_{0}[q_{n}]}E_{\theta_{0}}||T_{\tau}(X)-\theta_{0}||^2
 \leqslant 2\rho^2q_n\log\big(\frac{n}{q_n}\big)(1+o(1))\mbox{ as } n\rightarrow\infty,
 \end{equation}
 where the $o(1)$ term depends on $\rho>c>1$. Now, using (\ref{MSE_HSTYPE_UB_RHO}), and the fact that the minimax $l_2$ risk in (\ref{MINIMAX_RATE}) is the greatest lower bound to the mean square error term in (\ref{MSE_HSTYPE_UB_RHO}), we obtain
 \begin{equation}\label{RATIO_MSE_HSTYPE_MINIMAX_UB_RHO}
 1\leqslant\frac{\sup\limits_{\theta_{0}\in l_{0}[q_{n}]}E_{\theta_{0}}||T_{\tau}(X)-\theta_{0}||^2}{\inf\limits_{\hat{\theta}}\sup\limits_{\theta_{0}\in l_{0}[q_{n}]}E_{\theta_{0}}||\hat{\theta}-\theta_{0}||^2}\leqslant\rho^2(1+o(1)) \mbox{ as } n\rightarrow\infty.
 \end{equation}
 Hence taking limit inferior and limit superior in (\ref{RATIO_MSE_HSTYPE_MINIMAX_UB_RHO}) as $n\rightarrow\infty$, we have,
  \begin{equation}\label{RATIO_MSE_HSTYPE_MINIMAX_LIMINF_LIMSUP}
 1\leqslant \liminf_{n\rightarrow\infty}\frac{\sup\limits_{\theta_{0}\in l_{0}[q_{n}]}E_{\theta_{0}}||T_{\tau}(X)-\theta_{0}||^2}{\inf\limits_{\hat{\theta}}\sup\limits_{\theta_{0}\in l_{0}[q_{n}]}E_{\theta_{0}}||\hat{\theta}-\theta_{0}||^2}\leqslant\limsup_{n\rightarrow\infty}\frac{\sup\limits_{\theta_{0}\in l_{0}[q_{n}]}E_{\theta_{0}}||T_{\tau}(X)-\theta_{0}||^2}{\inf\limits_{\hat{\theta}}\sup\limits_{\theta_{0}\in l_{0}[q_{n}]}E_{\theta_{0}}||\hat{\theta}-\theta_{0}||^2}\leqslant\rho^2.
 \end{equation}
 Note that the Bayes estimator $T_{\tau}(X)$ does not depend on the choice of $\rho>c>1$. Hence the ratio in (\ref{RATIO_MSE_HSTYPE_MINIMAX_UB_RHO}) is independent of the choice of $\rho>c>1$. Consequently, the limit inferior and limit superior terms in (\ref{RATIO_MSE_HSTYPE_MINIMAX_LIMINF_LIMSUP}) are both independent of how $\rho>c>1$ are chosen. But choices of $\rho>c>1$ in (\ref{RATIO_MSE_HSTYPE_MINIMAX_LIMINF_LIMSUP}) in are arbitrary. Therefore, taking infimum over all possible choices of $\rho>c>1$ in (\ref{RATIO_MSE_HSTYPE_MINIMAX_LIMINF_LIMSUP}) it follows that
 \begin{equation}\label{EXACT_ASYMP_MINIMAXITY}
  \sup\limits_{\theta_{0}\in l_{0}[q_{n}]}E_{\theta_{0}}||T_{\tau}(X)-\theta_{0}||^2 \sim \inf\limits_{\hat{\theta}}\sup\limits_{\theta_{0}\in l_{0}[q_{n}]}E_{\theta_{0}}||\hat{\theta}-\theta_{0}||^2.
 \end{equation}

Observe that the above arguments also go through even if $\tau$ is taken to be asymptotically of the order of $\frac{q_n}{n}$ or $\frac{q_n}{n}\sqrt{\log(n/q_n)}$. Thus, (\ref{EXACT_ASYMP_MINIMAXITY}) clearly shows that for carefully chosen values of the global tuning parameter $\tau$ depending on the proportion of non-null means $\frac{q_n}{n}$, Bayes estimators based on the horseshoe-type priors, asymptotically attains the minimax risk with respect to the $l_2-$norm, up to the correct constant, and are therefore asymptotically minimax. It may be noted that it is a refinement of the corresponding result on asymptotic minimaxity (up to a multiplicative constant) of the horseshoe estimator obtained by \citet{PKV2014}. One possible explanation for such good performance of these priors is their ability to squelch the noise observations back to the origin, while leaving the large observations almost unshrunk, provided the global shrinkage parameter $\tau$ is carefully chosen. It is well known that smaller values of $a$ typically result in a prior distribution with heavier tails. For example, the inverted beta families with $a=0.5$ typically yields Cauchy like tails. See \citet{PS2012} in this context. Similar discussion on how the choice of the parameter $a$ controls the tail behavior of the generalized double Pareto priors can also be found in \citet{ADL2012}. Recall that for the generalized double Pareto priors, we have $a=\alpha/2$, where $\alpha$ denotes the corresponding shape parameter (see \cite{GTGC2015}). \citet{ADL2012} argued for using smaller values of $\alpha$ to ensure strong shrinkage of noise-like observations towards the origin and to avoid the occurrence of large bias terms due to large signals. They recommended the standard double Pareto distribution as a default prior specification which has Cauchy like tail and for which one has $a=0.5$. It is also worth noting in this context that \citet{ADC2011} recommended using $a \in (0,1)$ and $b \in (0,1)$ for their proposed three parameter beta normal mixture priors. Their range of $b$ is fully covered, while that of $a$ is partially covered by Theorem \ref{THM_3.1}. Thus, Theorem \ref{THM_3.1} provides strong theoretical support in favor of the recommendations of \citet{ADC2011} and \citet{ADL2012} regarding the choice of the hyperparameters in the corresponding one-group formulation.

 \begin{remark}
  Note that the aforesaid asymptotic minimaxity property of Bayes estimates based on our chosen class of priors depends on treating $\tau$ as a tuning parameter to be chosen carefully depending on the proportion of non-zero means. \cite{ADC2011} argued that the choice of the global tuning parameter should reflect the prior knowledge about the underlying sparsity presented in the data, provided such information is available. However, in practice one often doesn't have such information. In such situations, \citet{PKV2014} provided conditions under which the horseshoe estimator combined with an empirical Bayes estimate of the global variance component still attains the minimax risk when $q_n \propto n^{\beta}$, for $0<\beta<1$. We record here that we have a partly independent argument for proving such asymptotic minimaxity result based on the empirical Bayes estimate proposed by \citet{PKV2014}. This argument works for the general class of priors under study in the current paper. However, considering the length of the article, we omit the proof.
 \end{remark}
The next theorem gives an upper bound to the total posterior variance corresponding to our general class of heavy tailed shrinkage priors when $a\in[0.5,1)$.

\begin{thm}\label{THM_3.2}
 Suppose $X \sim \mathcal{N}_{n}(\theta_{0},I_{n})$, where $\theta_{0} \in l_{0}[q_{n}]$. Consider the general class of shrinkage priors where the prior distribution of the local shrinkage parameters $\lambda_i^2$'s is given by (\ref{LAMBDA_PRIOR}) with $0.5\leqslant a <1$ and the corresponding slowly varying component $L(\cdot)$ satisfies Assumption \ref{ASSUMPTION_LAMBDA_PRIOR}. Then the total posterior variance corresponding to this general class of shrinkage priors satisfies
 \begin{equation}\label{EXPECT_POST_VAR_UB}
 \sup_{\theta_{0}\in l_{0}[q_{n}]}E_{\theta_{0}} \sum_{i=1}^{n} Var(\theta_{0i}|X_i) \lesssim
  q_n\log\big(\frac{1}{\tau^{2a}}\big)+(n-q_n)\tau^{2a}\sqrt{\log\big(\frac{1}{\tau^{2a}}\big)}.
 \end{equation}
 
if $\tau \rightarrow 0$, as $n\rightarrow\infty$, $q_n\rightarrow\infty$ and $q_n=o(n)$.
\end{thm}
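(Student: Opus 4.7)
The plan is to mirror the proof of Theorem \ref{THM_3.1}: derive a convenient pointwise upper bound on $Var(\theta_i\mid X_i,\tau)$, sum over $i$, take $E_{\theta_{0}}$, split the $n$ coordinates into the $n-q_n$ ``noise'' ones ($\theta_{0i}=0$, so $X_i\sim N(0,1)$) and the at most $q_n$ ``signal'' ones ($\theta_{0i}\neq 0$), and then invoke the same technical machinery (Lemmas \ref{LEMMA_MOST_IMP}--\ref{LEMMA_IMPORTANT_INTEGRAL}) that underpins Theorem \ref{THM_3.1}.

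For the pointwise bound, note that conditional on $(\lambda_i^2,\tau)$ the posterior of $\theta_i$ is $N\bigl((1-\kappa_i)X_i,\,1-\kappa_i\bigr)$, so the law of total variance gives
\begin{equation*}
  Var(\theta_i\mid X_i,\tau) \;=\; E(1-\kappa_i\mid X_i,\tau) \;+\; X_i^2\,Var(\kappa_i\mid X_i,\tau).
\end{equation*}
Since $\kappa_i\in(0,1)$, one has $Var(\kappa_i\mid X_i,\tau) = Var(1-\kappa_i\mid X_i,\tau) \leq E((1-\kappa_i)^2\mid X_i,\tau) \leq E(1-\kappa_i\mid X_i,\tau)$, which yields the workhorse inequality
\begin{equation*}
  Var(\theta_i\mid X_i,\tau) \;\leq\; (1+X_i^2)\bigl(1-E(\kappa_i\mid X_i,\tau)\bigr). \qquad (\ast)
\end{equation*}
Summing $(\ast)$ and taking $E_{\theta_0}$, the noise coordinates contribute exactly $(n-q_n)\,E_{X_i\sim N(0,1)}\bigl[(1+X_i^2)(1-E(\kappa_i\mid X_i,\tau))\bigr]$, which is of the type controlled by the Appendix's noise-part arguments in Theorem \ref{THM_3.1}: those lemmas deliver a per-coordinate bound of $\tau^{2a}\sqrt{\log(1/\tau^{2a})}$, producing the stated $(n-q_n)\tau^{2a}\sqrt{\log(1/\tau^{2a})}$ noise contribution.

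For signal coordinates the bound $(\ast)$ is too crude, because $E_{\theta_0}[X_i^2]=1+\theta_{0i}^2$ may be arbitrarily large and we need an estimate independent of $\theta_{0i}$. Here I would replace $(\ast)$ by the companion bound obtained from the same identity via $Var(\kappa_i\mid X_i,\tau)\leq E(\kappa_i^2\mid X_i,\tau)\leq E(\kappa_i\mid X_i,\tau)$, namely
\begin{equation*}
  Var(\theta_i\mid X_i,\tau) \;\leq\; E(1-\kappa_i\mid X_i,\tau) + X_i^2\,E(\kappa_i\mid X_i,\tau).
\end{equation*}
The first summand is at most $1$ and contributes at most $q_n$ in total. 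The second summand equals $X_i\bigl(X_i-T_{\tau}(X_i)\bigr)$; the tail robustness of the heavy-tailed priors under consideration (guaranteed by Assumption \ref{ASSUMPTION_LAMBDA_PRIOR} on the slowly varying component $L(\cdot)$) ensures that $X_i-T_{\tau}(X_i)$ stays bounded as $|X_i|\to\infty$, and the same Appendix lemmas give a uniform-in-$\theta_{0i}$ bound of order $\log(1/\tau^{2a})$ on $E_{\theta_0}\bigl[X_i^2\,E(\kappa_i\mid X_i,\tau)\bigr]$, yielding the required $q_n\log(1/\tau^{2a})$ signal contribution.

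The main obstacle is precisely this uniform-in-$\theta_{0i}$ control on the signal coordinates: a cavalier application of $(\ast)$ would leak a $\theta_{0i}^2$ term, so the argument genuinely needs the rapid decay of $E(\kappa_i\mid X_i,\tau)$ as $|X_i|$ grows, which is the hallmark of horseshoe-type priors and is encoded in Assumption \ref{ASSUMPTION_LAMBDA_PRIOR}. Aside from this refined pointwise step, the noise/signal split and the subsequent estimates are a direct transcription of the corresponding pieces of the proof of Theorem \ref{THM_3.1}, which is why the final bound (\ref{EXPECT_POST_VAR_UB}) has the same form as (\ref{MSE_BAYES_ESTIMATE}).
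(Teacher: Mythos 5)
Your overall architecture --- the law-of-total-variance identity $Var(\theta_i\mid X_i,\tau)=E(1-\kappa_i\mid X_i,\tau)+X_i^2\,Var(\kappa_i\mid X_i,\tau)$, the signal/noise split, and the threshold at $|x|\asymp\zeta_\tau=\sqrt{2\log(1/\tau^{2a})}$ --- is exactly the paper's (Lemma \ref{POST_VAR_IDENTITY} and the proof of Theorem \ref{THM_3.2}). Your treatment of the signal coordinates is also essentially correct: $Var(\theta_i\mid X_i)\leqslant 1+X_i^2E(\kappa_i\mid X_i,\tau)$, with $X_i^2E(\kappa_i\mid X_i,\tau)\leqslant\rho^2\zeta_\tau^2$ on $\{|X_i|\leqslant\rho\zeta_\tau\}$ and $X_i^2E(\kappa_i\mid X_i,\tau)=|X_i|\,|X_i-T_\tau(X_i)|$ controlled on the complement by the functions $h_1,h_2$ of Lemma \ref{LEMMA_MOST_IMP}; this yields the $q_n\log(1/\tau^{2a})$ term uniformly in $\theta_{0i}$, just as in the paper.

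The genuine gap is in the noise coordinates. Your workhorse inequality $(\ast)$ relaxes $Var(\kappa_i\mid X_i,\tau)\leqslant E((1-\kappa_i)^2\mid X_i,\tau)$ one step further to $E(1-\kappa_i\mid X_i,\tau)$, and that last step destroys exactly the decay you need. The only available handle on $E(1-\kappa\mid x,\tau)$ is the moment bound of Lemma \ref{LEM_MOMENT_INEQ}, $E(1-\kappa\mid x,\tau)\lesssim e^{x^2/2}\tau^{2a}$; inserting it into $(\ast)$ and integrating against $\phi$ over the bulk region gives
\[
\int_{-\zeta_\tau}^{\zeta_\tau}(1+x^2)\,e^{x^2/2}\tau^{2a}\,\phi(x)\,dx\;\asymp\;\tau^{2a}\zeta_\tau^{3}\;=\;\tau^{2a}\bigl(2\log(1/\tau^{2a})\bigr)^{3/2},
\]
because the prefactor $e^{x^2/2}$ exactly cancels $\phi(x)$ and the $1+x^2$ is left unabsorbed. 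This is larger than the claimed per-coordinate bound $\tau^{2a}\sqrt{\log(1/\tau^{2a})}$ by a full factor of $\log(1/\tau^{2a})$, and the resulting noise contribution $(n-q_n)\tau^{2a}(\log(1/\tau^{2a}))^{3/2}$ exceeds the minimax rate for the relevant choices $\tau\asymp q_n/n$ or $\tau\asymp(q_n/n)\sqrt{\log(n/q_n)}$, so the theorem does not follow. The paper's fix is to keep the second moment: it bounds $Var(\theta\mid x)\leqslant\frac{T_\tau(x)}{x}+J(x,\tau)$ with $J(x,\tau)=x^2E((1-\kappa)^2\mid x,\tau)$ and proves in Lemma \ref{LEMMA_IMPORTANT_INTEGRAL} the sharper estimate $J(x,\tau)\leqslant 2KMe^{x^2/2}\tau^{2a}(1+o(1))$ with \emph{no} polynomial prefactor in $x$ --- the change of variables there shows the $x^2$ is absorbed precisely because $(1-\kappa)^2$ decays faster than $1-\kappa$. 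If you stop your chain at $X_i^2E((1-\kappa_i)^2\mid X_i,\tau)$ and establish such a second-moment bound, your argument closes; as written, it does not.
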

\begin{proof}
 See Appendix.
\end{proof}

The next theorem gives upper bounds on the rate of contraction of posterior distributions based on our general class of one-group priors under study with $0.5\leqslant a <1$, both around the true mean vector as well as the corresponding Bayes estimates. 

\begin{thm}\label{THM_3.3}
 Under the assumptions of Theorem \ref{THM_3.2}, if $\tau=\big(\frac{q_n}{n}\big)^{\alpha}$, $\alpha\geqslant1$, or $\tau=\frac{q_n}{n}\sqrt{\log(n/q_n)}$, then
 \begin{equation}\label{OPT_POST_CON_ARND_TRUE_MEAN}
\sup_{\theta_{0} \in l_{0}[q_{n}]} E_{\theta_{0}} \Pi \bigg(\theta:||\theta-\theta_0||^2 > M_{n}q_n\log\big(\frac{n}{q_n}\big)|X\bigg) \rightarrow 0,
 \end{equation}
and
 \begin{equation}\label{POST_CON_ARND_BAYES_EST}
\sup_{\theta_{0} \in l_{0}[q_{n}]} E_{\theta_{0}} \Pi \bigg(\theta:||\theta-T_{\tau}(X)||^2 > M_{n}q_n\log\big(\frac{n}{q_n}\big)|X\bigg) \rightarrow 0,
 \end{equation}
for every $M_n\rightarrow\infty$ as $n\rightarrow\infty$.
\end{thm}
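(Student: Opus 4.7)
The plan is to reduce both statements to the upper bounds already established in Theorems \ref{THM_3.1} and \ref{THM_3.2} by means of Markov's inequality applied to the posterior, and then to verify that, for the two prescribed choices of $\tau$, the bound $q_n\log(1/\tau^{2a}) + (n-q_n)\tau^{2a}\sqrt{\log(1/\tau^{2a})}$ is of the correct order $q_n\log(n/q_n)$.

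First I would handle (\ref{POST_CON_ARND_BAYES_EST}). Since $T_\tau(X)$ is the posterior mean, the bias--variance decomposition gives $E(\|\theta-T_\tau(X)\|^2\mid X)=\sum_{i=1}^n Var(\theta_i\mid X_i)$ coordinatewise, so Markov's inequality yields
\begin{equation*}
\Pi\bigl(\|\theta-T_\tau(X)\|^2>M_nq_n\log(n/q_n)\mid X\bigr)\leqslant \frac{\sum_{i=1}^n Var(\theta_i\mid X_i)}{M_nq_n\log(n/q_n)}.
\end{equation*}
Taking $E_{\theta_0}$ on both sides and invoking Theorem \ref{THM_3.2}, the numerator is at most a constant multiple of $q_n\log(1/\tau^{2a})+(n-q_n)\tau^{2a}\sqrt{\log(1/\tau^{2a})}$. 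Next, for (\ref{OPT_POST_CON_ARND_TRUE_MEAN}), I would use the identity $E(\|\theta-\theta_0\|^2\mid X)=\sum_i Var(\theta_i\mid X_i)+\|T_\tau(X)-\theta_0\|^2$, so that Markov again gives
\begin{equation*}
E_{\theta_0}\Pi\bigl(\|\theta-\theta_0\|^2>M_nq_n\log(n/q_n)\mid X\bigr)\leqslant \frac{E_{\theta_0}\sum_i Var(\theta_i\mid X_i)+E_{\theta_0}\|T_\tau(X)-\theta_0\|^2}{M_nq_n\log(n/q_n)},
\end{equation*}
and both terms in the numerator are governed by the same upper bound via Theorems \ref{THM_3.1} and \ref{THM_3.2}.

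The remaining task is to check that, for the two specified choices of $\tau$, the shared upper bound is $O(q_n\log(n/q_n))$. If $\tau=(q_n/n)^\alpha$ with $\alpha\geqslant 1$, then $\log(1/\tau^{2a})=2a\alpha\log(n/q_n)$, so the first term is $\lesssim q_n\log(n/q_n)$; for the second term, since $2a\alpha\geqslant 1$, one has $(n-q_n)\tau^{2a}\leqslant n(q_n/n)^{2a\alpha}=q_n(q_n/n)^{2a\alpha-1}=o(q_n)$, and multiplying by the $\sqrt{\log(n/q_n)}$ factor keeps the term $o(q_n\log(n/q_n))$. If $\tau=(q_n/n)\sqrt{\log(n/q_n)}$, then $\log(1/\tau^{2a})\sim 2a\log(n/q_n)$, so the first term is again $O(q_n\log(n/q_n))$; for the second term, $(n-q_n)\tau^{2a}\sqrt{\log(1/\tau^{2a})}\lesssim n(q_n/n)^{2a}(\log(n/q_n))^{a+1/2}=q_n(q_n/n)^{2a-1}(\log(n/q_n))^{a+1/2}$, which is $O(q_n\log(n/q_n))$ when $a=1/2$ and is in fact $o(q_n\log(n/q_n))$ when $a>1/2$, because $(q_n/n)^{2a-1}$ decays polynomially while the polylogarithmic factor grows only slowly. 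Once these elementary estimates are in place, both right-hand sides in the Markov bounds are $\lesssim 1/M_n$ uniformly over $\theta_0\in l_0[q_n]$, and letting $M_n\to\infty$ concludes the proof.

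The main obstacle is not conceptual but bookkeeping: one needs to track constants and the parameter $a\in[0.5,1)$ carefully in the second case $\tau=(q_n/n)\sqrt{\log(n/q_n)}$ so as to confirm that the $(n-q_n)\tau^{2a}\sqrt{\log(1/\tau^{2a})}$ term does not overwhelm $q_n\log(n/q_n)$. Aside from this, the argument is a direct combination of Markov's inequality with the previously established $l_2$ upper bounds, so no new technical lemmas are required beyond those already stated in the paper.
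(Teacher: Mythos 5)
Your proposal is correct and follows essentially the same route as the paper, which likewise disposes of both statements by Markov's inequality applied to the posterior second moment combined with the bounds of Theorems \ref{THM_3.1} and \ref{THM_3.2}; you merely spell out the bias--variance decomposition and the order computations for the two choices of $\tau$ that the paper leaves implicit. (One negligible slip: in the borderline case $2a\alpha=1$ the quantity $q_n(q_n/n)^{2a\alpha-1}$ is $O(q_n)$ rather than $o(q_n)$, but the extra $\sqrt{\log(n/q_n)}$ factor still keeps that term $o\big(q_n\log(n/q_n)\big)$, exactly as you conclude.)
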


\begin{proof}
 A straightforward application of Markov's inequality coupled with the results of Theorem \ref{THM_3.1} and Theorem \ref{THM_3.2} leads to (\ref{OPT_POST_CON_ARND_TRUE_MEAN}), while (\ref{POST_CON_ARND_BAYES_EST}) follows from the result of Theorem \ref{THM_3.2} together with the Markov's inequality.
\end{proof}
Using (\ref{OPT_POST_CON_ARND_TRUE_MEAN}) and (\ref{POST_CON_ARND_BAYES_EST}), it follows that the posterior distributions based on our chosen class of one-group priors, with $0.5\leqslant a <1$, contract around both the true mean vector and the corresponding Bayes estimates at least as fast as the minimax $l_2$ risk in (\ref{MINIMAX_RATE}). On the other hand, \citet{GGV2000} showed that the posterior distributions cannot contract faster than the minimax risk around the truth. Hence, the rate of contraction of these posterior distributions around the true $\theta_{0}$ must be the minimax optimal rate in (\ref{MINIMAX_RATE}), up to some multiplicative constants. However, the same may not be true for contraction around the corresponding Bayes estimates since no such result is available in the literature on the rates of contraction of posterior distributions around their respective posterior means. As already mentioned in the introduction that in order to provide a realistic measure of uncertainty, a posterior distribution needs to contract around the corresponding posterior mean at the same rate at which it approaches towards the true parameter value. Hence one needs to show that the posterior distributions in Theorem \ref{THM_3.3} contract around their corresponding Bayes estimates at the minimax optimal rate which cannot be established through (\ref{POST_CON_ARND_BAYES_EST}). However, it seems that this requires a deeper investigation by invoking newer techniques and arguments to come up with a satisfactory answer to this question. We leave this as an important and interesting problem for future research.\newline

In order to obtain a better insight regarding the spread of the posterior distribution around the Bayes estimates and the effect of choosing different values of $\tau$ on them, let us confine our attention to an important sub-family of the class of horseshoe-type prior distributions when the corresponding $L(\cdot)$ in (\ref{LAMBDA_PRIOR}) satisfies Assumption \ref{ASSUMPTION_LAMBDA_PRIOR} and is non-decreasing over $(0,\infty)$. This sub-family of priors covers the three parameter beta normal mixtures with hyperparameters $a=0.5$, $b>0$ (e.g. the horseshoe, the Strawderman$-$Berger), the generalized double Pareto priors with shape parameter $\alpha=1$ (e.g. the standard double Pareto), the inverse-gamma priors with shape parameter $\alpha=0.5$ and many other such shrinkage priors. The next theorem gives a lower bound to the total posterior variance corresponding to this sub-family of priors which provides some useful insights into the effect of choosing $\tau$ depending on the proportion of non-zero means $\frac{q_n}{n}$. 

\begin{thm}\label{THM_3.4}
 Suppose $X \sim \mathcal{N}_{n}(\theta_{0},I_{n})$, where $\theta_{0} \in l_{0}[q_{n}]$. Consider the general class of shrinkage priors where the prior distribution of the local shrinkage parameters $\lambda_i^2$'s is given by (\ref{LAMBDA_PRIOR}) with $a=0.5$, and the corresponding slowly varying component $L(\cdot)$ satisfies Assumption \ref{ASSUMPTION_LAMBDA_PRIOR} and is non-decreasing over $(0,\infty)$. Then, the total posterior variance corresponding to the general class of shrinkage priors, satisfies
 \begin{equation}\label{LOWER_BND_POST_VAR}
  \sum_{i=1}^{n} E_{\theta_{0}} Var(\theta_{0i}|X_i) \gtrsim (n-q_n)\tau\sqrt{\log\big(\frac{1}{\tau}\big)},
\end{equation}
 if $\tau \rightarrow 0$, as $n\rightarrow\infty$, $q_n\rightarrow\infty$ and $q_n=o(n)$.
\end{thm}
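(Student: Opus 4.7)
I plan to follow the approach to the lower bound used in \citet{PKV2014} for the horseshoe prior and extend it to the present sub-family of horseshoe-type priors. Since $\theta_{0}\in l_{0}[q_{n}]$ has at least $n-q_{n}$ null entries and the Bayes rule factorizes across coordinates, non-negativity of the variance first reduces the claim to showing
\[
E_{X\sim N(0,1)}[\mathrm{Var}(\theta\mid X,\tau)]\;\gtrsim\; \tau\sqrt{\log(1/\tau)}
\]
per null coordinate. Then, using $\theta\mid X,\kappa\sim N((1-\kappa)X,\,1-\kappa)$ with $\kappa=(1+\lambda^{2}\tau^{2})^{-1}$, the total variance decomposition gives
\[
\mathrm{Var}(\theta\mid X,\tau)=1-E(\kappa\mid X,\tau)+X^{2}\,\mathrm{Var}(\kappa\mid X,\tau)\;\ge\; X^{2}\,\mathrm{Var}(\kappa\mid X,\tau),
\]
so the task further reduces to lower bounding $E_{X\sim N(0,1)}[X^{2}\,\mathrm{Var}(\kappa\mid X,\tau)]$.

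The key step will be a uniform pointwise estimate over a long transition band of $X$. Setting $u_{\tau}=\sqrt{2\log(1/\tau)}$ and writing $X=\alpha u_{\tau}$ with $\alpha\in(0,1)$, the $\kappa$-posterior density for $a=1/2$ is proportional to
\[
(1-\kappa)^{-3/2}\,L\!\bigl((1-\kappa)/(\kappa\tau^{2})\bigr)\,e^{-\kappa X^{2}/2}.
\]
After substituting $v=1-\kappa$ and carrying out Laplace-type asymptotics on the integrals defining $E(\kappa^{j}\mid X,\tau)$ for $j=1,2$ in the small-$\tau$ regime, I expect to obtain
\[
X^{2}\,\mathrm{Var}(\kappa\mid X,\tau)\;\gtrsim\;\tau^{1-\alpha^{2}}\qquad\text{and}\qquad \phi(X)\;\asymp\;\tau^{\alpha^{2}},
\]
and therefore the striking uniform bound
\[
X^{2}\,\mathrm{Var}(\kappa\mid X,\tau)\,\phi(X)\;\gtrsim\;\tau
\]
for every $X$ in the band $[c_{0},u_{\tau}]$, where $c_{0}>0$ is any fixed constant independent of $\tau$.

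Integrating this pointwise estimate over the band then produces the $\sqrt{\log}$ factor:
\[
E_{X\sim N(0,1)}[X^{2}\,\mathrm{Var}(\kappa\mid X,\tau)]\;\ge\; \int_{c_{0}}^{u_{\tau}} X^{2}\,\mathrm{Var}(\kappa\mid X,\tau)\,\phi(X)\,dX\;\gtrsim\;\tau(u_{\tau}-c_{0})\;\asymp\;\tau\sqrt{\log(1/\tau)},
\]
which, combined with the reduction of the first paragraph, closes the proof after multiplying by the $n-q_{n}$ null coordinates. The principal obstacle will be the pointwise estimate of the second paragraph: the analogous estimates in \citet{PKV2014} exploit explicit horseshoe computations in which the defining integrals reduce to beta-type quantities with tractable asymptotics, whereas here one must handle the additional slowly varying factor $L((1-\kappa)/(\kappa\tau^{2}))$. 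This is precisely where the monotonicity of $L$ in Theorem~\ref{THM_3.4}, together with its boundedness and positive limit at infinity from Assumption~\ref{ASSUMPTION_LAMBDA_PRIOR}, becomes crucial: it allows $L$ to be bracketed by positive constants on the $\kappa$-windows that drive the integrals, reducing the analysis to the horseshoe case up to universal multiplicative constants so that the PKV-style estimates carry over to the entire sub-family under consideration.
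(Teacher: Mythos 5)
Your proposal is correct and follows essentially the same route as the paper: the paper likewise discards the non-null coordinates, lower bounds $Var(\theta|x)$ by $x^{2}\,E[(1-\kappa)^{2}|x,\tau]-x^{2}E^{2}[(1-\kappa)|x,\tau]=2y\big[I_{5/2}/I_{1/2}-(I_{3/2}/I_{1/2})^{2}\big]$ with $y=x^{2}/2$, brackets $L$ between $L(1)$ (via monotonicity) and $M$ (via Assumption \ref{ASSUMPTION_LAMBDA_PRIOR}) in Lemma \ref{LEMMA_POST_VAR_LOWER_BOUND} to recover the horseshoe-type integral asymptotics, and then invokes the band-integration argument of Theorem 3.4 of \citet{PKV2014} exactly as you outline. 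The only caveat is that your key pointwise estimate is stated as an expectation rather than verified, but it is precisely what the paper's $I_{k}$ bounds deliver.
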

\begin{proof}
 See Appendix.
\end{proof}

Like \citet{PKV2014}, here also it can be observed from Theorem \ref{THM_3.4} that if we take $\tau=(q_n/n)^{\alpha}$ for $0<\alpha<1$, then the lower bound in (\ref{LOWER_BND_POST_VAR}) may exceed the minimax rate which implies that for such values of $\tau$ the corresponding prior distributions may have a sub-optimal posterior spread. Again, taking $\tau=q_n/n$ in Theorem \ref{THM_3.4}, the corresponding lower bound in (\ref{LOWER_BND_POST_VAR}) is of the order $(q_n/n)\sqrt{\log(q_n/n)}$, thereby missing the minimax rate by the logarithmic factor $\sqrt{\log(q_n/n)}$. This also suggests that for $\tau=q_n/n$ such posterior distributions may contract around their respective centers at a rate faster than the minimax rate. Again, taking $\tau=(q_n/n)^{\alpha}$ with $\alpha \geqslant 1$ in the lower bound in Theorem \ref{THM_3.4} results a rate that is faster than the minimax $l_2$ risk, which means that for such choices of $\tau$ the corresponding lower bound in (\ref{LOWER_BND_POST_VAR}) fail to provide adequate information. However, if instead we choose $\tau=(\frac{q_n}{n})\sqrt{\log(n/q_n)}$ in Theorem \ref{THM_3.4}, then using Theorem \ref{THM_3.2} and Theorem \ref{THM_3.4} it follows that the total posterior spread for this sub-family of priors is asymptotically of the order of the minimax $l_2$ risk in (\ref{MINIMAX_RATE}). Again, it has already been established that all the desired upper bounds in Theorem \ref{THM_3.1} and Theorem \ref{THM_3.2} are asymptotically of the order of the minimax error rate under the $l_2$ norm. Thus, like \citet{PKV2014}, our theoretical results also suggest that for estimation of a sparse multivariate normal mean vector based on the horseshoe-type priors such as those in Theorem \ref{THM_3.4} and also for optimal posterior contraction properties, $\tau=(\frac{q_n}{n})\sqrt{\log(n/q_n)}$ should be regarded as the optimal choice of $\tau$.

\section{Asymptotic Bayes Optimality Under Sparsity}
In the second part of this paper, we focus on the problem of simultaneous testing for the means of independent normal observations. For that we consider the normal means model in (\ref{NORMAL_MEANS_MODEL}), where we have $n$ independent observations $X_1,\dots,X_n$ such that for each $i$, $X_i$ is distributed according to a $N(\theta_i,1)$ distribution. Suppose we wish to know whether for each $i$, $\theta_i$ is zero or not, that is, we wish to test $H_{0i}:\theta_i=0$ against $H_{Ai}:\theta_i\neq 0$, for $i=1,\dots,n$. Our focus is on situations when the unknown mean vector $(\theta_1,\dots,\theta_n)$ is sparse, that is, most of the null hypotheses $H_{0i}$'s are assumed to be true compared to the total number of tests $n$, which is typically assumed to be large. As already mentioned in the introduction that a natural Bayesian approach to formulate problems of this kind is to model the data through a two-component point mass mixture prior for the unknown $\theta_i$'s. Let us introduce a set of latent indicator random variables $\nu_1,\dots,\nu_n$, where $\nu_i=0$ denotes the event that $H_{0i}$ is true while $\nu_i=1$ corresponds to the event $H_{0i}$ is false. Note that under $H_{0i}$, $\theta_i=0$, while under $H_{Ai}$ $\theta_i\neq 0$. So, let us assume that given $\nu_i=0$, $\theta_i\sim\delta_{\{0\}}$, the distribution having probability mass $1$ at the point $0$, while $\theta_i|\nu_i=1\sim N(0,\psi^2)$, where $\psi^2 > 0$ is usually assumed to be large to accommodate the large signals. It is further assumed that the unobservable indicator random variables $\nu_i$'s are random samples from a $\mbox{Bernoulli}(p)$ distribution, for some $p\equiv p_n$ in $(0,1)$. The parameter $p$ is often interpreted as the theoretical proportion of true alternatives. Marginalizing over the $\nu_i$'s, given $(p,\psi^2)$, $\theta_i$'s are assumed to be generated according to the following two-component point mass mixture distribution given by, 
 \begin{equation}\label{TWO_GROUP_MU}
  \theta_i \stackrel{i. i. d.}{\sim} (1-p)\delta_{\{0\}} + p N(0,\psi^2), \mbox{ $i=1,\dots,n$}.
   \end{equation} 
The marginal distribution of $X_i$'s is given by the following {\it two-groups} normal mixture model:
 \begin{equation}\label{TWO_GROUP_X}
  X_i \stackrel{i. i. d.}{\sim} (1-p)N(0,1) + p N(0,1+\psi^2), \mbox{ $i=1,\dots,n$}.
   \end{equation} 
Under the above set up, the given testing problem now boils down to testing simultaneously
\begin{equation}\label{TO_TEST}
 H_{0i}: \nu_i=0  \mbox{ versus } H_{Ai}: \nu_i=1 \mbox{ for } i=1,\dots,n.
\end{equation}
 
For the above multiple testing problem in (\ref{TO_TEST}), we consider a Bayesian decision theoretic framework described as follows. Let us assume that for each individual testing problem, the loss for committing a type I error and a type II error are the same and equal to $1$ and the total loss is assumed to be the sum of losses incurred in each individual test. Thus the overall loss for the present multiple testing problem is the total number of misclassified hypotheses. Suppose $t_{1i}$ and $t_{2i}$ denote the probabilities of committing a type I error and a type II error respectively of a given multiple testing procedure for the $i-$th testing problem. Then, the corresponding Bayes risk of that procedure under the two-groups model (\ref{TWO_GROUP_X}), denoted $R$, is given by   
 \begin{equation}\label{BAYES_RISK_GEN}
  R=\sum \limits_{i=1}^{n}\big[(1-p)t_{1i}+pt_{2i}\big].
 \end{equation}
Under this set up, \cite{BCFG2011} showed that the Bayes rule which minimizes the Bayes risk in (\ref{BAYES_RISK_GEN}) is the test which, for each $i=1,\dots,n$, declares the $i$-th null hypothesis $H_{0i}$ to be significant if
 \begin{equation}\label{BAYES_RULE}
 \pi(\nu_i=1|X_i)> 0.5, \mbox{ or equivalently, } X_i^2 > c^2,
 \end{equation}
 where 
 \begin{equation}\label{THRESHOLD}
  c^2\equiv c^2_{\psi,f,\delta}=\frac{1+\psi^2}{\psi^2}(\log(1+\psi^2)+ 2\log(\frac{1-p}{p})).\nonumber
 \end{equation}

 The above rule is also referred to as the Bayes Oracle by \cite{BGT2008} and \cite{BCFG2011}, since it involves the unknown mixing proportion $p$ and the variance $\psi^2$ of the non-null $\theta_i$'s which is also unknown, and hence cannot be attained for finite $n$.\newline
 
 By introducing two new parameters $u\equiv u_n=\psi_n^2 $ and $v\equiv v_n=\psi_n^2\big(\frac{1-p_n}{p_n}\big)^2$,  \cite{BCFG2011} considered the following asymptotic scheme given by,

 \begin{assumption}\label{ASSUMPTION_ASYMP}
 \qquad
 \begin{enumerate}

    \item $p_n \rightarrow 0 \mbox{ as }n\rightarrow\infty$.
    
    \item $u_n=\psi_n^2 \rightarrow\infty \mbox{ as }n\rightarrow\infty$.
    
    \item $\frac{\log{v_n}}{u_n} \rightarrow C \in (0,\infty) \mbox{ as }n\rightarrow\infty$.
  \end{enumerate}
 \end{assumption}
 
 Under Assumption \ref{ASSUMPTION_ASYMP}, \cite{BCFG2011} obtained the following asymptotic expressions for the probabilities of type I and type II errors corresponding to the Bayes Oracle (\ref{BAYES_RULE}), given by,
   \begin{eqnarray}
    t_1^{BO} &=& e^{-C/2} \sqrt{\frac{2}{\pi v \log v}}(1+o(1)), \mbox{ and }\label{T1_OPT}\\
    t_2^{BO} &=& (2\Phi(\sqrt{C})-1)(1+o(1)),\label{T2_OPT}
   \end{eqnarray}
 and the corresponding optimal Bayes risk, denoted $R_{Opt}^{BO}$, is given by,
   \begin{equation}\label{OPT_BAYES_RISK}
    R_{Opt}^{BO}=n((1-p)t_1^{BO}+pt_2^{BO})=np(2\Phi(\sqrt{C})-1)(1+o(1)).
   \end{equation}

Under this set up, \citet{BCFG2011} introduced the notion of asymptotic Bayes optimality under sparsity (ABOS) defined as follows.
\begin{definition}
 A multiple testing procedure with Bayes risk $R$ with respect to the two-groups model (\ref{TWO_GROUP_X}), is said to be ABOS, if
 \begin{equation}
  \frac{R}{R_{Opt}^{BO}} \rightarrow 1 \mbox{ as } n\rightarrow\infty,\label{ABOS_DEFN}
 \end{equation}
where $R_{Opt}^{BO}$ denotes the optimal Bayes risk as given by (\ref{OPT_BAYES_RISK}) and the sequence of vectors $\{(\psi_n^2,p_n)\}_{n\geqslant1}$ is assumed to satisfy the conditions of Assumption \ref{ASSUMPTION_ASYMP}.
\end{definition}
In other words, a multiple testing procedure is said to be ABOS if within the asymptotic framework Assumption \ref{ASSUMPTION_ASYMP}, it attains the optimal Bayes risk in (\ref{OPT_BAYES_RISK}) up to the correct constant when the number of tests grows to infinity. \citet{BCFG2011} provided necessary and sufficient conditions for a fixed threshold multiple testing rules to be ABOS and established that the step-up multiple testing procedure due to \citet{BH1995} and the Bonferroni procedure are ABOS under very general conditions.\newline

For the independent normal means testing problem, \citet{CPS2010} observed through simulations that, under the assumption of sparsity, the posterior probability of the $i$-th alternative hypothesis being true under their two-groups formulation, can be well approximated by the posterior shrinkage weight $1-\widehat{\kappa}_i$, where $\widehat{\kappa}_i$ denotes the $i$-th posterior shrinkage coefficient based on the horseshoe prior. Looking at the close proximity of these two posterior quantities, \citet{CPS2010} proposed a natural thresholding rule under a symmetric $0-1$ loss based on the horseshoe prior, given by:
  \begin{equation}
  \mbox{ reject } H_{0i} \mbox{ if } 1-\widehat{\kappa}_i > 0.5, \mbox{ $i=1,\dots,n$}.\nonumber
 \end{equation}  

 \citet{CPS2010} numerically observed that under the assumption of sparsity, their proposed multiple testing procedure closely mimics the performance of the corresponding optimal Bayes rule based on their two-groups formulation which was later theoretically validated by \citet{DG2013}. For the multiple testing problem as in (\ref{TO_TEST}), \citet{DG2013} considered the following decision rule based on the horseshoe  prior assuming a symmetric $0-1$ loss, given by
  \begin{equation}\label{INDUCED_DECISION}
  \mbox{ reject } H_{0i} \mbox{ if } 1-E(\kappa_i|X_i,\tau) > 0.5, \mbox{ $i=1,\dots,n$},
 \end{equation}  
where the data is assumed to be generated according to the two-groups model (\ref{TWO_GROUP_X}). They showed that within the asymptotic framework of \citet{BCFG2011}, if $\tau\sim p$, the induced decisions in (\ref{INDUCED_DECISION}) attains the optimal Bayes risk in (\ref{OPT_BAYES_RISK}) up to a multiplicative constant, the constant being close to 1. In a more recent article, \citet{GTGC2015} generalized and improved the results of \citet{DG2013} over a broad class of one-group shrinkage priors that includes the horseshoe in particular. They considered a general family of one-group tail robust prior distributions where the prior distribution $\pi(\lambda_i^2)$ for the local shrinkage parameters $\lambda_i^2$ is given by (\ref{LAMBDA_PRIOR}) and satisfies the following:
 \begin{enumerate}[(I)]
   \item $\frac{1}{2} < a < 1$
   \item  $a= \frac{1}{2}$ and $L(t)/\sqrt{\log(t)} \rightarrow 0$ as $t \rightarrow \infty$.
   \end{enumerate} 

\citet{GTGC2015} showed that within their chosen asymptotic framework, if $\lim_{n\rightarrow\infty} \tau/p \in(0,\infty)$, then the Bayes risk of the multiple testing rules in (\ref{INDUCED_DECISION}) based on the above general class of prior distributions, denoted $R_{OG}$, satisfies
  \begin{equation}\label{BAYES_RISK_OG_GTGC}
   np\big[2\Phi\big(\sqrt{2a}\sqrt{C}\big)-1\big]\big(1+o(1)\big)
  \leqslant   
   R_{OG}
   \leqslant np\big[2\Phi\bigg(\sqrt{\frac{2aC}{\eta(1-\delta)}}\bigg)-1\big]\big(1+o(1)\big) \mbox{ as } n\rightarrow\infty,
  \end{equation}
 for every fixed $\eta \in (0,\frac{1}{2})$ and $\delta \in (0,1)$, where the $o(1)$ terms above are not necessarily the same, tend to zero as $n\rightarrow\infty$ and depend on the choice of $\eta \in (0,\frac{1}{2})$ and $\delta \in (0,1)$.\newline
 
In case the proportion $p$ is unknown, \citet{GTGC2015} considered a data adaptive procedure by replacing $\tau$ by an empirical Bayes estimate $\widehat{\tau}$ in the definition of the induced decisions in (\ref{INDUCED_DECISION}). The aforesaid empirical Bayes estimate of $\tau$ was proposed by \citet{PKV2014} and is given by,
\begin{equation}\label{TAU_HAT_EMPB}
 \widehat{\tau}= \max\bigg\{\frac{1}{n}, \frac{1}{c_2n}\sum_{j=1}^{n}1\{|X_j|> \sqrt{c_1\log n}\}\bigg\}
\end{equation}
where $c_1 \geq 2$ and $c_2 \geq 1$ are some predetermined finite real numbers. Letting $E(1-\kappa_i|X_i, \widehat{\tau})$ denote the $i-$th posterior shrinkage weight $E(1-\kappa_i|X_i,\tau)$ evaluated at $\tau=\widehat{\tau}$, \citet{GTGC2015} considered the following empirical Bayes procedure, given by,
\begin{equation}\label{INDUCED_DECISION_EB}
\mbox{ reject } H_{0i} \mbox{ if } 1-E(\kappa_i|X_i,\widehat{\tau}) > 0.5, \mbox{ $i=1,\dots,n$}.
\end{equation}

It was shown in \citet{GTGC2015} that, within the asymptotic framework of \citet{BCFG2011}, if $p\equiv p_n \propto n^{-\beta}$, for $\beta\in (0,1)$, the Bayes risk of the empirical Bayes multiple testing rules in (\ref{INDUCED_DECISION_EB}) above, denoted $R^{EB}_{OG}$, is bounded above by,
  \begin{equation}\label{EMP_BAYES_RISK_GTGC}
   R^{EB}_{OG}
   \leq np\big[2\Phi\bigg(\sqrt{\frac{2aC}{\eta(1-\delta)}}\bigg)-1\big]\big(1+o(1)\big) \mbox{ as } n\rightarrow\infty,
 \end{equation}
 for every fixed $\eta \in (0,\frac{1}{2})$ and $\delta \in (0,1)$, where the $o(1)$ term above tends to zero as $n\rightarrow\infty$ and depends on the choice of $\eta$ and $\delta$.\newline
  
Thus, the induced decisions proposed by \citet{CPS2010} based on a general family of heavy tailed one-group prior distributions, asymptotically attain the Oracle risk (\ref{OPT_BAYES_RISK}) up to $O(1)$, with the constant in the one-group risk being close to that in the Oracle risk. However, an interesting question which naturally arises is whether it is possible for such induced decisions to attain the optimal Bayes risk in (\ref{OPT_BAYES_RISK}) up to the correct constant, that is, whether such induced multiple testing rules can be asymptotically Bayes optimal when the hyperparameters $(\psi_n^2, p_n)$ of the two-groups model satisfy Assumption \ref{ASSUMPTION_ASYMP}. For that we consider in this section the multiple testing rules (\ref{INDUCED_DECISION}) and (\ref{INDUCED_DECISION_EB}) imposed by our chosen general class of one-group shrinkage priors, where the prior distribution for the local shrinkage parameters $\lambda_j^2$'s is given by (\ref{LAMBDA_PRIOR}) and the corresponding slowly varying component $L(\cdot)$ satisfies Assumption \ref{ASSUMPTION_LAMBDA_PRIOR}. It will be seen in the forthcoming subsections that the answer to the aforesaid question of asymptotic Bayes optimality is indeed in the affirmative for the horseshoe-type prior distributions.\newline 

But before going into the theoretical details of this section, let us first have a look at where we gain in our present approach as compared to those in \citet{DG2013} and \citet{GTGC2015}. Note that, a careful inspection of the proofs of the last two articles shows that, for the induced decisions under study, the contribution due to erroneously rejecting the true nulls to the overall Bayes risk is negligible compared to that due to erroneous acceptance of false nulls. Hence, it is the contribution due to the type II errors only where one can improve upon. Note that, the authors of \citet{DG2013} and \citet{GTGC2015} employed the following inequality:
 \begin{equation}
  E(\kappa_i|X_i,\tau)\leqslant \eta + \Pr(\kappa_i>\eta|X_i,\tau), \mbox{ for any } \eta\in(0,1).\nonumber
 \end{equation}
which played an important role for obtaining a non-trivial upper bound to the corresponding type II errors. On the other hand, in the present article, we use one key lemma involving the term $E(\kappa_i|X_i,\tau)$ (a version of which turns to be essential for proving the asymptotic minimaxity and related contraction results of Section 3), followed by some novel and delicate arguments, which to the best of our knowledge, has not been reported elsewhere before. The aforesaid lemma describes an important asymptotic behavior of the posterior quantity $E(\kappa_i|x,\tau)$ for large $x$'s, when both $x$ goes to infinity and and $\tau$ tends to 0 simultaneously at an appropriate rate, which cannot be explained using the above inequality. This is where we differ distinctively in our present approach compared to those of \citet{DG2013} and \citet{GTGC2015}. Moreover, using the aforesaid lemma, we also obtain a sharp asymptotic lower bound to the corresponding type I error probabilities when $\tau$ is treated as a tuning parameter, which, in turn, helps to deduce the fact that for the horseshoe-type priors, the optimal choice of $\tau$ should be asymptotically of the order of $p$ when it is treated as a tuning parameter only. This will be made more precise in Remark \ref{OPT_CHOICE_TAU} of this paper.

 \subsection{Asymptotic Bounds on Error Probabilities of Both Kinds}
Before studying the asymptotic risk properties of any multiple testing procedure, it is first necessary to investigate the asymptotic behaviors of the corresponding type I and type II error probabilities of the multiple testing procedure under study. In this section, we present four important results, namely, Theorem \ref{THM_T1_UB_LB} - Theorem \ref{THM_T2_EB_UB}, involving asymptotic bounds to the probabilities of type I and type II errors of the individual induced decisions, both when $\tau$ is treated as a tuning parameter and it is replaced by the empirical Bayes estimate $\widehat{\tau}$ defined in (\ref{TAU_HAT_EMPB}). While Theorem \ref{THM_T1_UB_LB} and Theorem \ref{THM_T2_UB_LB} below give asymptotic bounds to the error probabilities of both kinds of the $i$-th decision in (\ref{INDUCED_DECISION}), Theorem \ref{THM_T1_EB_UB} and Theorem \ref{THM_T2_EB_UB} give asymptotic upper bounds for the probabilities of type I and type II errors of the $i$-th empirical Bayes decision in (\ref{INDUCED_DECISION_EB}), respectively. These results are of fundamental importance for analyzing the behavior of the multiple testing procedures (\ref{INDUCED_DECISION}) and (\ref{INDUCED_DECISION_EB}) in terms of their corresponding Bayes risks under the two groups normal mixture model (\ref{TWO_GROUP_X}) and the usual additive loss. It would be worth noting in this context that the asymptotic upper bound to the type I error probability ($t_{1i}$) as given in Theorem \ref{THM_T1_UB_LB} and the asymptotic lower bound to the type II error probability ($t_{2i}$) as given by Theorem \ref{THM_T1_UB_LB}, are simple consequences of Theorem 4.4 and Theorem 4.7 of \citet{GTGC2015}, while the corresponding proofs for the asymptotic upper and lower bounds for $t_{2i}$ and $t_{1i}$, respectively, require some novel arguments as already discussed at the end of the preceding section. Using this novel argument, we obtain sharp asymptotic upper and lower bounds to the probabilities of type II and type I errors of the $i-th$ induced decision in (\ref{INDUCED_DECISION}) as given by Theorem \ref{THM_T2_UB_LB} and Theorem \ref{THM_T1_UB_LB}, respectively, which cannot be improved further. This will be made more precise later in this paper. Moreover, proofs of Theorem \ref{THM_T1_EB_UB} and Theorem \ref{THM_T2_EB_UB} follow using Theorem \ref{THM_T1_UB_LB} and Theorem \ref{THM_T2_UB_LB}, together with the arguments of \citet{GTGC2015}. Hence, we only state these results and excluded their proofs for the sake of brevity.

 \begin{thm}\label{THM_T1_UB_LB}
 Suppose $X_1,\dots,X_n$ are i.i.d. observations generated according to the two-groups normal mixture model (\ref{TWO_GROUP_X}) and suppose Assumption \ref{ASSUMPTION_ASYMP} is satisfied by the sequence of vectors $(\psi^2, p)$ defining the two-groups model (\ref{TWO_GROUP_X}). Suppose we wish to test simultaneously $H_{0i}:\nu_i=0$ vs $H_{Ai}:\nu_i=1$, for $i=1,\dots,n$, using the classification rule (\ref{INDUCED_DECISION}) induced by the general class of one-group shrinkage priors where the prior distribution of the local shrinkage parameter $\pi(\lambda_i^2)$ is given by (\ref{LAMBDA_PRIOR}) with $a\in (0,1)$, and the corresponding slowly varying component $L(\cdot)$ satisfies Assumption \ref{ASSUMPTION_LAMBDA_PRIOR}. Suppose $\tau =\tau_n \rightarrow 0$ as $n\rightarrow\infty$. Let us fix any $\eta\in(0,1)$ and any $\delta\in(0,1)$. Then the probability $t_{1i}$ of type I error of the $i$-th decision in (\ref{INDUCED_DECISION}) satisfies 
 \begin{equation}
  G_{1}(a,\eta,\delta)\frac{\big(\tau^{2a}\big)^{\frac{\zeta}{2}}}{\sqrt{\log(\frac{1}{\tau^2})}}(1+o(1))
 \leqslant t_1 \equiv t_{1i} \leqslant H_{1}(a,\eta,\delta)\frac{\tau^{2a}}{\sqrt{\log(\frac{1}{\tau^2})}}(1+o(1))\mbox{ as $n\rightarrow\infty$},\nonumber
 \end{equation}
  for any fixed $\zeta > \frac{2}{\eta(1-\delta)}$, where the $o(1)$ terms above do not depend on $i$ and are not equal. Moreover, the $o(1)$ terms appearing on the left hand side of the above inequality depends on the choice of $\eta \in (0,1)$ and $\delta \in (0,1)$, while the $o(1)$ term on the right hand side of the above inequality is independent of the choices of $\eta \in (0,1)$ and $\delta \in (0,1)$. Here $G_{1}(a,\eta,\delta)$ and $H_{1}(a,\eta,\delta)$ are some finite positive constants each being independent of both $i$ and $m$, but depend on $a\in(0,1)$, $\eta \in (0,1)$ and $\delta \in (0,1)$.
   \end{thm}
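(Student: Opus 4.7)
My plan is to reduce $t_{1i}$ to a single Mills-ratio estimate on a $\tau$-dependent threshold. Under $H_{0i}$, $X_i \sim N(0,1)$, so $t_{1i} = \Pr(E(\kappa|Z,\tau) < 1/2)$ with $Z \sim N(0,1)$. I first establish that $x \mapsto E(\kappa|x,\tau)$ is strictly decreasing in $|x|$: the posterior density of $\kappa$ given $X = x$ is proportional to $\kappa^{a-1/2}(1-\kappa)^{-a-1} L\bigl(\tfrac{1-\kappa}{\kappa \tau^2}\bigr) e^{-\kappa x^2/2}$, which exhibits monotone likelihood ratio in $\kappa$ decreasing as $x^2$ grows. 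Consequently, $\{E(\kappa|X_i,\tau) < 1/2\} = \{|X_i| > \zeta_\tau\}$, where $\zeta_\tau > 0$ is the unique root of $E(\kappa|\zeta_\tau,\tau) = 1/2$, so that $t_{1i} = 2(1 - \Phi(\zeta_\tau))$. The task now is to sandwich $\zeta_\tau^2$ between two multiples of $\log(1/\tau^2)$ and apply Mills' ratio.

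For the upper bound on $t_{1i}$, I would invoke Theorem 4.4 of \citet{GTGC2015}, which in the present setting yields $E(\kappa|x,\tau) > 1/2$ whenever $x^2 \leq 2a \log(1/\tau^2)(1 - o(1))$. This forces $\zeta_\tau^2 \geq 2a\log(1/\tau^2)(1 - o(1))$, and the upper Mills bound $1 - \Phi(z) \leq \phi(z)/z$ then yields $t_{1i} \leq H_1(a,\eta,\delta) \tau^{2a}/\sqrt{\log(1/\tau^2)}(1 + o(1))$, as stated.

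The lower bound on $t_{1i}$ is the new ingredient. I would use the key lemma of the Appendix (the one emphasized by the authors as going beyond the soft inequality $E(\kappa|x,\tau) \leq \eta + \Pr(\kappa > \eta|x,\tau)$ from \citet{GTGC2015}): for every fixed $\eta \in (0,1)$ and $\delta \in (0,1)$, and every $\zeta > 2/(\eta(1-\delta))$, the lemma shows $E(\kappa|x,\tau) \leq \eta(1 + o(1))$ whenever $x^2 \geq a\zeta \log(1/\tau^2)$. Choosing $\eta$ small enough that $\eta(1 + o(1)) < 1/2$ gives $\zeta_\tau^2 \leq a\zeta \log(1/\tau^2)(1 + o(1))$, and the lower Mills bound $1 - \Phi(z) \geq \phi(z)/(z + 1/z)$ then delivers $t_{1i} \geq G_1(a,\eta,\delta)(\tau^{2a})^{\zeta/2}/\sqrt{\log(1/\tau^2)}(1 + o(1))$, with the constants $G_1$ and $H_1$ absorbing the finite bookkeeping coming from the slowly varying $L$ and the choice of $(\eta,\delta,\zeta)$.

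The main obstacle is the key lemma itself. Capturing the sharp joint decay of
\begin{equation*}
E(\kappa|x,\tau) = \frac{\int_0^1 \kappa^{a+1/2}(1-\kappa)^{-a-1} L\bigl(\tfrac{1-\kappa}{\kappa \tau^2}\bigr) e^{-\kappa x^2/2}\, d\kappa}{\int_0^1 \kappa^{a-1/2}(1-\kappa)^{-a-1} L\bigl(\tfrac{1-\kappa}{\kappa \tau^2}\bigr) e^{-\kappa x^2/2}\, d\kappa}
\end{equation*}
as $(x,\tau) \to (\infty,0)$ requires a careful Laplace-type split of the $\kappa$-integration around the critical scale $\kappa^{\ast} \asymp x^{-2}$, combined with Assumption \ref{ASSUMPTION_LAMBDA_PRIOR} and the slow variation of $L$ to control the two pieces uniformly in $\tau$. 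The soft bound used in \citet{GTGC2015} cannot resolve this scale and therefore cannot produce the correct lower asymptotic on $\zeta_\tau$ used above; the novelty lies entirely in pushing the integral analysis to this joint regime.
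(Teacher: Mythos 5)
Your lower-bound argument --- the genuinely new part of Theorem \ref{THM_T1_UB_LB} --- is in substance the same as the paper's: restrict to the event $C_n=\{|X_1|>\sqrt{\zeta\log(1/\tau^{2a})}\}$, use the key lemma (Lemma \ref{MOST_IMP_LEMMA_ABOS}) to conclude that on $C_n$ the rule rejects once $\tau$ is small, and bound $\Pr(C_n)$ from below by Mills' ratio, which yields exactly $G_1(a,\eta,\delta)(\tau^{2a})^{\zeta/2}/\sqrt{\log(1/\tau^2)}$. Your packaging via strict monotonicity of $x\mapsto E(\kappa|x,\tau)$ in $|x|$ (so that $t_1=2(1-\Phi(\zeta_\tau))$ for an exact threshold $\zeta_\tau$) is valid and arguably cleaner: since $\sup_{|x|>\sqrt{\zeta\log(1/\tau^{2a})}}g(x,\tau)\to 0<1/2$, one gets $C_n\subseteq\{\mbox{reject}\}$ eventually, making the paper's conditioning-plus-Markov step dispensable. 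Two soft spots are worth flagging. First, you restate the key lemma as ``$E(\kappa|x,\tau)\leqslant\eta(1+o(1))$ on $C_n$'' and then ``choose $\eta$ small enough'' that this is below $1/2$; but $\eta$ is \emph{fixed} in the theorem (it enters $G_1(a,\eta,\delta)$ and the constraint $\zeta>2/(\eta(1-\delta))$), so that reading would only prove the claim for $\eta<1/2$. The lemma actually gives $\sup_{|x|>\sqrt{\zeta\log(1/\tau^{2a})}}E(\kappa|x,\tau)=o(1)$ with no additive $\eta$ --- your phrasing conflates it with the softer inequality $E(\kappa|x,\tau)\leqslant\eta+\Pr(\kappa>\eta|x,\tau)$ that it is designed to replace --- and with the correct form no restriction on $\eta$ is needed. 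Second, your route to the upper bound through a threshold estimate $\zeta_\tau^2\geqslant 2a\log(1/\tau^2)(1-o(1))$ only delivers $t_1\lesssim(\tau^{2a})^{1-o(1)}/\sqrt{\log(1/\tau^2)}$, and the stray factor $(\tau^{2a})^{-o(1)}$ need not stay bounded without a quantitative rate on that $o(1)$; the paper sidesteps this by importing the type I error bound of \citet{GTGC2015} directly rather than passing through a threshold location. Neither issue affects the correctness of the core lower-bound argument.
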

   
 \begin{proof}
  See Appendix.
 \end{proof}
  
\begin{thm}\label{THM_T2_UB_LB}
 Consider the set-up of Theorem \ref{THM_T1_UB_LB}. Let us assume that $\tau=\tau_n\rightarrow 0$ as $n\rightarrow\infty$ such that $\lim_{n\rightarrow\infty}\frac{\tau}{p^{\alpha}}\in(0,\infty)$, for $\alpha>0$. Then, for every fixed $\eta \in (0,1)$ and every fixed $\delta \in (0,1)$, the probability $t_{2i}$ of type II error of the $i$-th individual decision in (\ref{INDUCED_DECISION}) satisfies
    \begin{equation}
  \big[2\Phi(\sqrt{2a\alpha}\sqrt{C})-1\big](1+o(1))\leqslant t_2 \equiv t_{2i} \leqslant \big[2\Phi(\sqrt{\zeta a\alpha}\sqrt{C})-1\big](1+o(1)) \mbox { as } n\rightarrow\infty,\nonumber
   \end{equation}
 for any fixed $\zeta > \frac{2}{\eta(1-\delta)}$, where the $o(1)$ terms above do not depend on $i$ and are not identical. Moreover, the $o(1)$ term on the right hand side of the above inequality depends on the choice of $\eta \in (0,1)$ and $\delta \in (0,1)$, while the $o(1)$ term on the left hand side of the above inequality is independent of the choices of $\eta \in (0,1)$ and $\delta \in (0,1)$.
   \end{thm}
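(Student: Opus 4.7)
The plan is to reduce the type II error of the individual induced decision (\ref{INDUCED_DECISION}) to a standard normal probability, then pin down the defining threshold of the acceptance region using the same delicate posterior-shrinkage estimates that underpin the minimaxity result of Section 3, and finally convert the resulting $\log(1/\tau^2)$-scale into the $\psi^2$-scale via Assumption \ref{ASSUMPTION_ASYMP}. First, since the prior on $\theta_i$ is symmetric and the posterior of $\lambda_i^2$ given $|X_i|$ is stochastically increasing in $|X_i|$ (so $\kappa_i=1/(1+\lambda_i^2\tau^2)$ is stochastically decreasing), the acceptance region $A_n=\{x:E(\kappa_i|x,\tau)\ge 1/2\}$ is a symmetric interval $[-L_n,L_n]$ for a unique $L_n>0$. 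Using $X_i=\sqrt{1+\psi^2}\,Z$ with $Z\sim N(0,1)$ under $H_{Ai}$,
\[
t_{2i}\;=\;\Pr(X_i\in A_n\mid H_{Ai})\;=\;2\Phi\!\left(\frac{L_n}{\sqrt{1+\psi^2}}\right)-1,
\]
so the entire problem reduces to sandwiching $L_n^2/(1+\psi^2)$.

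The crux is to control $E(\kappa_i|x,\tau)$ at the critical scale $x^2\asymp a\log(1/\tau^2)$, which is exactly what Lemmas \ref{LEMMA_MOST_IMP}--\ref{LEMMA_IMPORTANT_INTEGRAL} supply. For any fixed $\varepsilon>0$, these lemmas yield $E(\kappa_i|x,\tau)\to 1$ whenever $|x|^2\le 2a\log(1/\tau^2)(1-\varepsilon)$, which forces $A_n\supseteq\{|x|^2\le 2a\log(1/\tau^2)(1-\varepsilon)\}$ eventually, so $L_n^2\ge 2a\log(1/\tau^2)(1+o(1))$. In the opposite direction, one uses the elementary inequality $E(\kappa_i|x,\tau)\le \eta+\Pr(\kappa_i>\eta\mid x,\tau)$ with $\eta\in(0,1/2)$; the tail estimate for $\Pr(\kappa_i>\eta\mid x,\tau)$ at $|x|^2\ge\zeta a\log(1/\tau^2)(1+\varepsilon)$ with $\zeta>2/(\eta(1-\delta))$ gives $\Pr(\kappa_i>\eta\mid x,\tau)=o(1)$, by arguments analogous to those in the proof of Theorem \ref{THM_T1_UB_LB} (which in turn rest on the same family of integral estimates). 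Therefore $E(\kappa_i|x,\tau)<1/2$ eventually on that set, whence $L_n^2\le \zeta a\log(1/\tau^2)(1+o(1))$.

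To convert the scale, the hypothesis $\tau\asymp p^\alpha$ gives $\log(1/\tau^2)=2\alpha\log(1/p)(1+o(1))$. Combined with $v_n=\psi_n^2((1-p_n)/p_n)^2$, $\log v_n/\psi_n^2\to C$ and $\psi_n^2\to\infty$, one obtains $\log(1/p_n)\sim C\psi_n^2/2$, and hence $\log(1/\tau^2)\sim \alpha C\psi_n^2\sim \alpha C(1+\psi_n^2)$. Plugging this into the Step~2 sandwich and using continuity of $\Phi$,
\[
[2\Phi(\sqrt{2a\alpha\,C})-1](1+o(1))\;\le\;t_{2i}\;\le\;[2\Phi(\sqrt{\zeta a\alpha\,C})-1](1+o(1)),
\]
as claimed. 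Note that the $o(1)$ in the lower inequality is independent of $\eta,\delta$ because the lower sandwich on $L_n^2$ holds unconditionally, whereas the upper $o(1)$ depends on $\eta,\delta$ through the choice of $\zeta$.

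The main obstacle is the lower bound on $E(\kappa_i|x,\tau)$ at the critical scale, equivalently the lower bound on $L_n^2$, where the multiplicative constant $2a$ in front of $\log(1/\tau^2)$ must be \emph{exact} if one wishes to obtain the correct Oracle constant in the downstream ABOS application. A crude estimate of the form $E(\kappa_i|x,\tau)\ge c>0$ for some fixed $c$ would fail; what is required is a sharp ratio estimate of the two integrals over $\kappa\in(0,1)$ defining $E(\kappa_i|x,\tau)$, obtained by splitting at a cutoff that depends simultaneously on $x$ and $\tau$ and exploiting the Karamata asymptotics of $L(\cdot)$ under Assumption \ref{ASSUMPTION_LAMBDA_PRIOR}. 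This is precisely the novel technical input advertised in the preceding discussion and is where the present argument departs decisively from \citet{DG2013} and \citet{GTGC2015}.
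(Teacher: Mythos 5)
Your reduction to a symmetric acceptance interval, your lower bound on $t_2$ (via the moment inequality of Lemma \ref{LEM_MOMENT_INEQ}, which gives $E(1-\kappa|x,\tau)\lesssim e^{x^2/2}\tau^{2a}\to 0$ uniformly on $\{x^2\leqslant 2a(1-\varepsilon)\log(1/\tau^2)\}$), and the conversion $\log(1/\tau^2)\sim\alpha C\psi_n^2$ are all sound. The genuine gap is in your upper-bound step. You control $E(\kappa|x,\tau)$ on $\{|x|^2\geqslant \zeta a\log(1/\tau^2)\}$ via the inequality $E(\kappa|x,\tau)\leqslant \eta+\Pr(\kappa>\eta\,|\,x,\tau)$, which forces $\eta<1/2$ so that $\eta+o(1)<1/2$. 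But the same $\eta$ appears in the exponent of the concentration bound of Lemma \ref{LEM_CONCENTRATION_INEQ}, $\Pr(\kappa>\eta|x,\tau)\leqslant H e^{-\eta(1-\delta)x^2/2}/(\tau^{2a}\Delta)$, so the admissible $\zeta$ is tied to the same $\eta$ through $\zeta>2/(\eta(1-\delta))$. With $\eta<1/2$ you can never take $\zeta$ below $4$, whereas the theorem asserts the upper bound for every $\eta\in(0,1)$, hence for $\zeta$ arbitrarily close to $2$. This is not a cosmetic loss: the downstream ABOS result (Theorem \ref{THM_BAYES_RISK_EXACT_ASYMP_ORDER}) takes the infimum over $\zeta>2/(\eta(1-\delta))$ to drive the upper constant to $2\Phi(\sqrt{2a\alpha}\sqrt{C})-1$, and your version stalls at $2\Phi(2\sqrt{a\alpha C})-1$. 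You have, in effect, reproduced exactly the additive-$\eta$ bound of \citet{DG2013} and \citet{GTGC2015} that the paper explicitly identifies as the obstruction it must avoid.

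The paper's fix is Lemma \ref{MOST_IMP_LEMMA_ABOS} (a variant of Lemma \ref{LEMMA_MOST_IMP}): instead of bounding $E(\kappa\,1\{\kappa<\eta\}\,|\,x,\tau)$ crudely by $\eta$, it evaluates this piece as a ratio of two integrals, and after the change of variables $u=x^2/(1+t\tau^2)$ obtains a bound of the form $C_{\textasteriskcentered}\big[x^2\int_{0}^{sx^2}e^{-u/2}u^{a-1/2}\,du\big]^{-1}$, which tends to $0$ as $|x|\to\infty$ with no additive constant and no restriction on $\eta$. Adding the concentration term then gives a majorant $g(x,\tau)$ with $\sup_{|x|>\sqrt{\zeta a\log(1/\tau^2)}}g(x,\tau)\to 0$ for any $\zeta>2/(\eta(1-\delta))$ and any $\eta\in(0,1)$; the paper then splits on the event $C_n=\{|X_1|>\sqrt{\zeta\log(1/\tau^{2a})}\}$ and identifies $\Pr(C_n^c|H_{A1})$ as the leading term, which is your Step 3 in different clothing. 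You actually point at the right ingredient in your closing paragraph (a ``sharp ratio estimate of the two integrals''), but you deploy it only for the lower bound on $L_n^2$, where the cruder moment inequality already suffices; the place it is indispensable is precisely the upper bound you delegated to the additive-$\eta$ inequality.
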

    \begin{proof}
  See Appendix.
 \end{proof}
 
 \begin{thm}\label{THM_T1_EB_UB}
 Suppose $X_1,\dots,X_n$ are i.i.d. observations generated according to the two-groups normal mixture model (\ref{TWO_GROUP_X}) and suppose Assumption \ref{ASSUMPTION_ASYMP} is satisfied by the sequence of vectors $(\psi^2, p)$ defining the two-groups model (\ref{TWO_GROUP_X}), with $p \propto n^{-\beta}$, for some $0<\beta<1$. Suppose we wish to test simultaneously $H_{0i}:\nu_i=0$ vs $H_{Ai}:\nu_i=1$, for $i=1,\dots,n$, using the classification rule (\ref{INDUCED_DECISION_EB}) induced by the general class of one-group shrinkage priors where the prior distribution of the local shrinkage parameter $\pi(\lambda_i^2)$ is given by (\ref{LAMBDA_PRIOR}) with $a\in (0,1)$, and the corresponding slowly varying component $L(\cdot)$ satisfies Assumption \ref{ASSUMPTION_LAMBDA_PRIOR}. Then, the probability $\widetilde{t}_{1i}$ of type I error of the $i$-th empirical Bayes decision in (\ref{INDUCED_DECISION_EB}) satisfies 
   \begin{equation}
   \widetilde{t}_{1i} \leq \frac{B_{1}^{*}\alpha_n^{2a}}{\sqrt{\log (\frac{1}{\alpha_n^2})}}(1+o(1))+ \frac{1/\sqrt{\pi}}{n^{c_1/2}\sqrt{\log n}} + e^{-2(2\log 2 -1)\beta_{0} np(1+o(1))}\mbox { as } n\rightarrow\infty,\nonumber
   \end{equation}
 where the $o(1)$ terms above are independent of $i$. Here $B^{*}_1$ and $\beta_{0}$ are some finite positive constants, each being independent of both $i$ and $n$, while $\alpha_n=\Pr(|X_1|>\sqrt{c_1\log n})$ depends on $n$ only.
   \end{thm}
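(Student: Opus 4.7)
The plan is to reduce the empirical Bayes case to the fixed-$\tau$ bound of Theorem \ref{THM_T1_UB_LB} by a two-step decomposition that first decouples $\widehat{\tau}$ from $X_i$ and then controls the residual randomness of $\widehat{\tau}$ through a Chernoff bound. First I would write $N_{-i} = \sum_{j\neq i} 1\{|X_j| > \sqrt{c_1 \log n}\}$ and observe that on the event $\{|X_i| \leq \sqrt{c_1 \log n}\}$ the estimator defined in (\ref{TAU_HAT_EMPB}) simplifies to $\widehat{\tau} = \max\{1/n, N_{-i}/(c_2 n)\}$, which depends only on $(X_j)_{j\neq i}$ and is therefore independent of $X_i$ conditional on this event. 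This motivates the split
\begin{equation*}
\widetilde{t}_{1i} \leq \Pr\big(E(\kappa_i|X_i,\widehat{\tau}) < 0.5,\ |X_i| \leq \sqrt{c_1 \log n}\,\big|\,H_{0i}\big) + \Pr\big(|X_i| > \sqrt{c_1 \log n}\,\big|\,H_{0i}\big),
\end{equation*}
whose second summand, under $H_{0i}$ with $X_i \sim N(0,1)$, is at most $2\phi(\sqrt{c_1\log n})/\sqrt{c_1\log n}$ by Mills' ratio, yielding precisely the middle term $\frac{1/\sqrt{\pi}}{n^{c_1/2}\sqrt{\log n}}$ of the stated bound.

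For the first summand I would exploit the monotonicity $E(\kappa_i|X_i,\tau) \downarrow$ in $\tau$, which is immediate since $\kappa_i = (1+\lambda_i^2\tau^2)^{-1}$ is itself pointwise decreasing in $\tau$. Introducing the good event $G_n = \{N_{-i} \leq 2(n-1)\alpha_n\}$, on $G_n$ one has $\widehat{\tau} \leq \alpha_n^{\ast} := \max\{1/n,\, 2(n-1)\alpha_n/(c_2 n)\}$. Since the alternative component $pN(0,1+\psi^2)$ contributes at least $\beta_0 p(1+o(1))$ to $\alpha_n$ for an appropriate constant $\beta_0 > 0$ (which follows from $\log(1+\psi^2)/\psi^2 \to C$ making the slab tail probability at $\sqrt{c_1\log n}$ comparable to a positive constant), we obtain $n\alpha_n\to\infty$ and $\alpha_n^{\ast} \asymp \alpha_n \to 0$. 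By the monotonicity above,
\begin{equation*}
\Pr\big(E(\kappa_i|X_i,\widehat{\tau}) < 0.5,\ |X_i| \leq \sqrt{c_1 \log n},\ G_n\,\big|\,H_{0i}\big) \leq \Pr\big(E(\kappa_i|X_i,\alpha_n^{\ast}) < 0.5\,\big|\,H_{0i}\big),
\end{equation*}
so the right-hand side is the type I error of the induced decision with a \emph{deterministic} global parameter $\alpha_n^{\ast} \to 0$, to which Theorem \ref{THM_T1_UB_LB} applies directly and produces the leading term $B_1^{\ast}\alpha_n^{2a}/\sqrt{\log(1/\alpha_n^2)}(1+o(1))$ after absorbing universal constants through the $\asymp$-relation between $\alpha_n^{\ast}$ and $\alpha_n$.

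For the complementary event $G_n^c$, since $N_{-i}$ is a sum of $n-1$ i.i.d. Bernoulli$(\alpha_n)$ variables, the multiplicative Chernoff bound with deviation equal to the mean gives
\begin{equation*}
\Pr(G_n^c) \leq \exp\big(-(2\log 2 - 1)(n-1)\alpha_n\big) \leq \exp\big(-2(2\log 2 - 1)\beta_0 np(1+o(1))\big),
\end{equation*}
which is exactly the third term of the claimed upper bound. Combining the three pieces finishes the proof. The principal technical obstacle I anticipate is careful bookkeeping of the $(1+o(1))$ factors when substituting the (sequence-dependent, but deterministic) $\alpha_n^{\ast}$ into the bound of Theorem \ref{THM_T1_UB_LB}, so that a single absolute constant $B_1^{\ast}$ independent of $i$ and $n$ emerges; verifying that the hypothesis $\tau\to 0$ of Theorem \ref{THM_T1_UB_LB} is met by $\alpha_n^{\ast}$ requires the assumption $p\propto n^{-\beta}$ with $\beta\in(0,1)$, which also ensures the Chernoff exponent grows without bound so that the third term is genuinely subdominant.
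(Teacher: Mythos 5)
Your three-way decomposition (large $|X_i|$, well-behaved $\widehat{\tau}$, badly-behaved $\widehat{\tau}$) is exactly the architecture the paper intends: it states that Theorem \ref{THM_T1_EB_UB} follows from Theorem \ref{THM_T1_UB_LB} together with the arguments of \citet{GTGC2015}, and your middle term (Mills' ratio under $H_{0i}$, using $c_1\geq 2$ to absorb $\sqrt{2/(\pi c_1)}$ into $1/\sqrt{\pi}$) and your third term (multiplicative Chernoff with deviation equal to the mean, plus the observation that the slab contributes at least $2\beta_0 p(1+o(1))$ to $\alpha_n$ because $c_1\log n/(1+\psi^2)$ converges to the constant $c_1C/(2\beta)$) both come out with the correct constants. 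The verification that $\alpha_n^{\ast}\asymp\alpha_n\rightarrow 0$, so that the deterministic envelope satisfies the hypothesis of Theorem \ref{THM_T1_UB_LB}, is also right.

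The gap is in the step where you transfer the event $\{E(\kappa_i|X_i,\widehat{\tau})<0.5\}$ to $\{E(\kappa_i|X_i,\alpha_n^{\ast})<0.5\}$ by asserting that $\tau\mapsto E(\kappa_i|X_i,\tau)$ is decreasing, ``immediate since $\kappa_i=(1+\lambda_i^2\tau^2)^{-1}$ is pointwise decreasing in $\tau$.'' That justification confuses the integrand with the posterior expectation: the posterior law of $\lambda_i^2$ given $X_i$ itself depends on $\tau$ through the marginal likelihood $(1+\lambda_i^2\tau^2)^{-1/2}e^{-X_i^2/(2(1+\lambda_i^2\tau^2))}$, so monotonicity of the posterior mean of $\kappa_i$ in $\tau$ is a statement about a ratio of two $\tau$-dependent integrals and requires a stochastic-ordering (MLR-type) property of the reparametrized prior $\kappa\mapsto\kappa^{a-1/2}(1-\kappa)^{-a-1}L\big(\tfrac{1}{\tau^2}(\tfrac{1}{\kappa}-1)\big)$ in $\tau$. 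For the general class considered here, where $L$ is only assumed bounded above and eventually bounded below, no such property is available (nor proved anywhere in the paper), so this step does not go through as written. The standard repair, and the route the paper's own machinery supports, is to avoid monotonicity of the exact posterior mean altogether: on the event $\{\widehat{\tau}\leq\alpha_n^{\ast}\}$ apply the moment inequality of Lemma \ref{LEM_MOMENT_INEQ}, whose explicit bound $\tfrac{KM}{a(1-a)}e^{x^2/2}\tau^{2a}(1+o(1))$ is visibly increasing in $\tau$ (with a version of the $o(1)$ uniform over $\tau\in[1/n,\alpha_n^{\ast}]$, both endpoints tending to zero). This yields $E(1-\kappa_i|X_i,\widehat{\tau})\leq \tfrac{KM}{a(1-a)}e^{X_i^2/2}(\alpha_n^{\ast})^{2a}(1+o(1))$, so the rejection event is contained in $\{e^{X_i^2/2}\gtrsim(\alpha_n^{\ast})^{-2a}\}$, and Mills' ratio under $H_{0i}$ then delivers the leading term $B_1^{\ast}\alpha_n^{2a}/\sqrt{\log(1/\alpha_n^2)}\,(1+o(1))$ directly. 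With that substitution the rest of your argument stands.
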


 \begin{thm}\label{THM_T2_EB_UB}
 Let us consider the set-up of Theorem \ref{THM_T1_EB_UB}. Then, for every fixed $\eta \in (0,1)$ and every fixed $\delta \in (0,1)$, the probability $\widetilde{t}_{2i}$ of type II error of the $i$-th empirical Bayes decision in (\ref{INDUCED_DECISION_EB}) satisfies
   \begin{eqnarray}
   \widetilde{t}_{2i} \leq \big[2\Phi\big(\sqrt{\zeta a}\sqrt{C}\big)-1\big]\big(1+o(1)\big) \mbox { as } n\rightarrow\infty,\nonumber
   \end{eqnarray}
 for any fixed $\zeta > \frac{2}{\eta(1-\delta)}$. Here the $o(1)$ term is independent of $i$, but depends on the choices of $\eta \in (0,1)$ and $\delta \in (0,1)$.
   \end{thm}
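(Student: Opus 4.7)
The plan is to reduce the empirical-Bayes type II error to a fixed-$\tau$ type II error through a monotonicity-plus-concentration argument. First, I would observe that since $\kappa_i = 1/(1+\lambda_i^2\tau^2)$ is pointwise non-increasing in $\tau^2$, the posterior shrinkage weight $1-E(\kappa_i|X_i,\tau)$ is pointwise non-decreasing in $\tau$ for every fixed $X_i$. Consequently, for any deterministic $\tau_L > 0$,
$$\{1-E(\kappa_i|X_i,\widehat{\tau}) \leq 0.5\} \cap \{\widehat{\tau} \geq \tau_L\} \subseteq \{1-E(\kappa_i|X_i,\tau_L) \leq 0.5\},$$
which yields the decomposition
$$\widetilde{t}_{2i} \leq P_{\nu_i=1}\bigl(1-E(\kappa_i|X_i,\tau_L) \leq 0.5\bigr) + P_{\nu_i=1}\bigl(\widehat{\tau} < \tau_L\bigr).$$

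Second, I would choose $\tau_L = c\,p$ for a suitably small absolute constant $c > 0$ and invoke the concentration bound on $\widehat{\tau}$ originally established in \citet{PKV2014} and already used in \citet{GTGC2015}: under Assumption \ref{ASSUMPTION_ASYMP} with $p \propto n^{-\beta}$, $\beta \in (0,1)$, one has $P(\widehat{\tau} < c\,p) \leq \exp(-\gamma_0 \, np)$ for some $\gamma_0 > 0$ depending on $c$. Since conditioning on $\nu_i = 1$ alters the distribution of only one of the $n$ summands appearing in the definition of $\widehat{\tau}$, the same kind of bound persists (for example by first removing the $i$-th term from the sum, which changes $\widehat{\tau}$ by at most $1/(c_2 n)$, and then applying the unconditional bound to the remaining independent sum). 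Because $np \to \infty$, this deviation probability is exponentially small and therefore $o(1)$.

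Third, because $\tau_L = c\,p$ satisfies $\tau_L/p \to c \in (0,\infty)$, Theorem \ref{THM_T2_UB_LB} applies with $\alpha = 1$ for the prescribed $\eta \in (0,1)$, $\delta \in (0,1)$ and any $\zeta > 2/(\eta(1-\delta))$, yielding
$$P_{\nu_i=1}\bigl(1-E(\kappa_i|X_i,\tau_L) \leq 0.5\bigr) \leq \bigl[2\Phi(\sqrt{\zeta a}\sqrt{C}) - 1\bigr](1+o(1)).$$
Combining this with the exponentially small contribution from the deviation term produces the stated bound, with the final $o(1)$ absorbing both sources of error.

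The main obstacle is ensuring that the concentration of $\widehat{\tau}$ below the threshold $c\,p$ is genuinely exponentially small in the conditional regime $\{\nu_i = 1\}$, so that it does not dominate the constant-order leading term $[2\Phi(\sqrt{\zeta a}\sqrt{C}) - 1]$. This is handled cleanly by the observation that the $i$-th summand in the definition of $\widehat{\tau}$ contributes at most $1/(c_2 n)$, which is $o(p)$ under $p \propto n^{-\beta}$, so passing from the unconditional to the conditional bound merely alters the exponent's constant without affecting the $o(1)$ conclusion. Once this concentration fact is in place, the rest of the argument is essentially an appeal to Theorem \ref{THM_T2_UB_LB} at the deterministic envelope $\tau_L$ and is therefore routine.
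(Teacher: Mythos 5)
Your overall architecture is the intended one: the paper omits this proof precisely because it reduces to Theorem \ref{THM_T2_UB_LB} via a decomposition on $\{\widehat{\tau}\geq \tau_L\}$ with $\tau_L \asymp p$, plus the binomial concentration of $\widehat{\tau}$ from \citet{PKV2014} and \citet{GTGC2015}; your treatment of the conditioning on $\nu_i=1$ (the $i$-th summand moves $\widehat{\tau}$ by at most $1/(c_2n)$) is also fine. The genuine gap is in your very first step. The inclusion $\{E(1-\kappa_i|X_i,\widehat{\tau})\le 0.5\}\cap\{\widehat{\tau}\ge\tau_L\}\subseteq\{E(1-\kappa_i|X_i,\tau_L)\le 0.5\}$ requires $\tau\mapsto E(\kappa_i|X_i,\tau)$ to be non-increasing, and this does \emph{not} follow from the pointwise monotonicity of $\kappa=1/(1+\lambda^2\tau^2)$ in $\tau$, because the posterior law of $\lambda^2$ itself changes with $\tau$. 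Substituting $w=\lambda^2\tau^2$ one finds
\begin{equation*}
E(1-\kappa|x,\tau)=\frac{\int_0^\infty \frac{w}{1+w}(1+w)^{-1/2}e^{-x^2/(2(1+w))}w^{-a-1}L(w/\tau^2)\,dw}{\int_0^\infty (1+w)^{-1/2}e^{-x^2/(2(1+w))}w^{-a-1}L(w/\tau^2)\,dw},
\end{equation*}
so monotonicity in $\tau$ amounts to stochastic monotonicity of this $w$-measure in $\tau$, i.e.\ to $t\mapsto L(st)/L(t)$ being monotone for $s<1$. That is a genuine extra regularity condition on $L$: it holds for the horseshoe, where $L(t)\propto t/(1+t)$, but it is not implied by Assumption \ref{ASSUMPTION_LAMBDA_PRIOR}, which only requires $L$ to be bounded with a positive limit at infinity. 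As written, your key containment rests on an unproven, and for the general class possibly false, claim.

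The repair is short and is what ``the arguments of \citet{GTGC2015}'' supply: do not compare posterior means at two values of $\tau$; compare the envelope. By Lemma \ref{MOST_IMP_LEMMA_ABOS}, $E(\kappa_1|X_1,\widehat{\tau})\le g(X_1,\widehat{\tau})$, and the only $\tau$-dependence of $g(x,\tau)$ is through the factor $\tau^{-2a}/\Delta(\tau^2,\eta,\delta)$ coming from Lemma \ref{LEM_CONCENTRATION_INEQ}. Since $\Delta(\tau^2,\eta,\delta)$ stays bounded away from $0$ for $\tau$ near $0$ (its limit as $\tau\to 0$ is finite and positive) and $\widehat{\tau}\le 1/c_2$, on the event $\{\widehat{\tau}\ge\tau_L\}$ one has $g(x,\widehat{\tau})\le \bar{g}(x,\tau_L)$, where $\bar{g}$ is obtained by replacing $\tau^{-2a}/\Delta(\tau^2,\eta,\delta)$ with a constant multiple of $\tau_L^{-2a}$. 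Running the proof of Theorem \ref{THM_T2_UB_LB} with $\bar{g}(\cdot,\tau_L)$ in place of $g(\cdot,\tau)$ (the extra constant only perturbs $H(a,\eta,\delta)$ and does not affect the limit statement in Lemma \ref{MOST_IMP_LEMMA_ABOS}) yields the required bound on $\Pr(\bar{g}(X_1,\tau_L)\ge 0.5\mid H_{A1})$, after which the remainder of your argument goes through unchanged.
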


 \subsection{Asymptotic Bayes Optimality Under Sparsity of Induced Testing Rules Based on Horseshoe-type Priors}
  In this section, we present in Theorem \ref{THM_BAYES_RISK_EXACT_ASYMP_ORDER} the exact asymptotic order of the ratio of Bayes risk of the induced multiple testing procedure (\ref{INDUCED_DECISION}) under study to that of the optimal Bayes risk (\ref{OPT_BAYES_RISK}) for a wide range of values of the global shrinkage parameter $\tau$ depending on the proportion of true alternatives $p$, which immediately shows that, within the asymptotic framework of \citet{BCFG2011}, if $\tau$ is asymptotically of the order of $p$, the multiple testing rules imposed by the horseshoe-type priors are ABOS. For the data adaptive empirical Bayes procedure, since it is already known that the corresponding Bayes risk is within a constant factor of the Oracle risk in (\ref{OPT_BAYES_RISK}) asymptotically (see \citet{GTGC2015}), we focus on the more interesting situation when $a=0.5$, that is, we consider the horseshoe-type priors only in this case. In Theorem \ref{THM_ABOS_EBAYES_HSTYPE}, we establish that, within the asymptotic framework of \citet{BCFG2011}, if $p\propto n^{-\beta}$, $0<\beta<1$, the empirical Bayes decisions (\ref{INDUCED_DECISION_EB}) based on the horseshoe-type priors will be ABOS. Proof of Theorem \ref{THM_BAYES_RISK_EXACT_ASYMP_ORDER} is based on the asymptotic bounds for the corresponding type I and type II error probabilities as given by Theorem \ref{THM_T1_UB_LB} and Theorem \ref{THM_T2_UB_LB}, followed by certain subtle analytic arguments, while proof of Theorem \ref{THM_ABOS_EBAYES_HSTYPE} is based on analogous arguments together with the techniques used for proving Theorem 3.2 of \citet{GTGC2015} and hence, it is omitted.
  
  \begin{thm}\label{THM_BAYES_RISK_EXACT_ASYMP_ORDER}
  Let $X_1,\dots,X_n$, be i.i.d. observations having the two-groups normal mixture distribution (\ref{TWO_GROUP_X}) where the sequence of vectors $(\psi^2, p)$ satisfy the conditions of Assumption \ref{ASSUMPTION_ASYMP}. Suppose we wish to test the $n$ hypotheses $H_{0i}:\nu_i=0$ vs $H_{Ai}:\nu_i=1$, for $i=1,\dots,n$, simultaneously, using the classification rule (\ref{INDUCED_DECISION}) induced by the general class of one-group shrinkage priors where the prior distribution of the local shrinkage parameter $\pi(\lambda_i^2)$ is given by (\ref{LAMBDA_PRIOR}) with $a\in [0.5,1)$, and the corresponding slowly varying component $L(\cdot)$ satisfies Assumption \ref{ASSUMPTION_LAMBDA_PRIOR}. It is further assumed that $\tau \rightarrow 0$ as $n\rightarrow\infty$ such that $\lim_{n\rightarrow\infty} \tau/p^{\alpha} \in(0,\infty)$, for $\alpha \geqslant 1$. Then, the Bayes risk of the multiple testing rules in (\ref{INDUCED_DECISION}), denoted $R_{OG}$, satisfies
  \begin{equation}\label{EXACT_ASYMP_ORDER_RATIO_RISKS}
   \lim_{n\rightarrow\infty}\frac{R_{OG}}{R_{Opt}^{BO}}=\frac{2\Phi\big(\sqrt{2a\alpha}\sqrt{C}\big)-1}{2\Phi\big(\sqrt{C}\big)-1}.
  \end{equation}
  In particular, for $a=0.5$ and $\alpha=1$ we have,
    \begin{equation}
   \lim_{n\rightarrow\infty}\frac{R_{OG}}{R_{Opt}^{BO}}=1.\nonumber
  \end{equation}
  \end{thm}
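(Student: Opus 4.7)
The plan is to combine the coordinatewise error-probability bounds in Theorems \ref{THM_T1_UB_LB} and \ref{THM_T2_UB_LB} with the Oracle risk expansion (\ref{OPT_BAYES_RISK}), and then close the resulting sandwich on $R_{OG}/R_{Opt}^{BO}$ by letting the auxiliary parameters $\eta,\delta$ (equivalently, $\zeta$) approach their boundary values. Since the coordinates of $(X_1,\dots,X_n)$ are i.i.d.\ under (\ref{TWO_GROUP_X}) and the rule (\ref{INDUCED_DECISION}) is coordinatewise symmetric, I would first note that $t_{1i}\equiv t_1$ and $t_{2i}\equiv t_2$, whence
\[
R_{OG}=n(1-p)t_1+npt_2,\qquad R_{Opt}^{BO}=np\big[2\Phi(\sqrt{C})-1\big](1+o(1)).
\]

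Next I would verify that the type-I contribution is asymptotically negligible. Feeding the upper bound $t_1\lesssim \tau^{2a}/\sqrt{\log(1/\tau^{2})}$ from Theorem \ref{THM_T1_UB_LB} into $n(1-p)t_1/R_{Opt}^{BO}$ and using $\tau\asymp p^{\alpha}$ with $\alpha\geq 1$ and $a\geq 1/2$ gives
\[
\frac{n(1-p)t_1}{R_{Opt}^{BO}}\;\lesssim\;\frac{p^{2a\alpha-1}}{\sqrt{\log(1/p)}}\;\longrightarrow\;0,
\]
because $2a\alpha\geq 1$ and $p\to 0$ (the edge case $2a\alpha=1$ is saved by the logarithm in the denominator). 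Consequently $R_{OG}/R_{Opt}^{BO}$ has the same limit as $npt_2/R_{Opt}^{BO}$, so it remains to analyze the type-II contribution.

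For the latter, I would apply the sandwich of Theorem \ref{THM_T2_UB_LB}: for every fixed $\eta\in(0,1)$, $\delta\in(0,1)$ and every $\zeta>2/[\eta(1-\delta)]$,
\[
\big[2\Phi(\sqrt{2a\alpha}\sqrt{C})-1\big](1+o(1))\;\leq\;t_2\;\leq\;\big[2\Phi(\sqrt{\zeta a\alpha}\sqrt{C})-1\big](1+o(1)),
\]
where the $o(1)$ on the left is independent of $(\eta,\delta)$ while the one on the right depends on them. Multiplying by $np$, dividing by $R_{Opt}^{BO}$, and taking $\liminf$ on the left and $\limsup$ on the right then yields
\[
\frac{2\Phi(\sqrt{2a\alpha}\sqrt{C})-1}{2\Phi(\sqrt{C})-1}\;\leq\;\liminf_{n\to\infty}\frac{R_{OG}}{R_{Opt}^{BO}}\;\leq\;\limsup_{n\to\infty}\frac{R_{OG}}{R_{Opt}^{BO}}\;\leq\;\frac{2\Phi(\sqrt{\zeta a\alpha}\sqrt{C})-1}{2\Phi(\sqrt{C})-1}.
\]

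The final step is to close the sandwich. The ratio $R_{OG}/R_{Opt}^{BO}$ is defined by the fixed testing rule and hence is free of $(\eta,\delta,\zeta)$, so the rightmost inequality holds simultaneously for every admissible triple. Letting $\eta\uparrow 1$ and $\delta\downarrow 0$ drives $\zeta\downarrow 2$; by continuity of $\Phi$ the infimum of the upper constant over all admissible triples is $2\Phi(\sqrt{2a\alpha}\sqrt{C})-1$, matching the lower bound and forcing (\ref{EXACT_ASYMP_ORDER_RATIO_RISKS}). The special case $a=1/2$, $\alpha=1$ collapses $\sqrt{2a\alpha}\sqrt{C}$ to $\sqrt{C}$, giving the ratio $1$ and hence ABOS. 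The step I expect to be most delicate is precisely this passage to the infimum: since the $o(1)$ on the upper side of the $t_2$ bound genuinely depends on $(\eta,\delta)$, one must send $n\to\infty$ first for each fixed admissible triple and only then take the infimum over $(\eta,\delta)$; a naive interchange of the two limits would not be justified, and the fact that the left-hand side is $(\eta,\delta)$-free is what legitimizes the argument.
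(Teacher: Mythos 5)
Your proposal is correct and follows essentially the same route as the paper's proof: the sandwich on $R_{OG}/R_{Opt}^{BO}$ from Theorems \ref{THM_T1_UB_LB} and \ref{THM_T2_UB_LB}, the $\liminf$/$\limsup$ step, and the closing of the gap by taking the infimum over $(\eta,\delta,\zeta)$ after $n\to\infty$, justified by the ratio being free of those auxiliary parameters. The only cosmetic difference is that you spell out the negligibility of the type-I contribution explicitly, whereas the paper delegates that step to the arguments of Theorem 1 of \citet{GTGC2015}.
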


  \begin{proof}
   See Appendix.
  \end{proof}
 
 Some important consequences of Theorem \ref{THM_BAYES_RISK_EXACT_ASYMP_ORDER} are the following. {\it First of all}, it gives an exact asymptotic expression of the Bayes risk $R_{OG}$ of the induced decisions in (\ref{INDUCED_DECISION}), for any $a\in[0.5,1)$ and any $\alpha\geqslant 1$. {\it Secondly}, it shows that for various choices of the global shrinkage parameter $\tau$, the induced decisions based on our general class of one-group priors, asymptotically attain the optimal Bayes risk in (\ref{OPT_BAYES_RISK}) up to a multiplicative factor, the multiplicative factor being close to 1, provided $\alpha$ is not too large compared to 1. {\it Thirdly and most importantly}, it says that, if the global shrinkage parameter $\tau$ is asymptotically of the order of the proportion of true alternatives $p$, the induced decisions (\ref{INDUCED_DECISION}) based on the horseshoe-type priors (for which $a=0.5$), asymptotically attain the optimal Bayes risk up to the correct constant, and hence, are ABOS. This not only sharpens the results of \citet{DG2013} and \citet{GTGC2015}, but at the same time provides an exact optimality result for a very broad class of one-group shrinkage priors in the context of multiple testing. Moreover, the above limiting expression in (\ref{EXACT_ASYMP_ORDER_RATIO_RISKS}) clearly shows that the limiting value of the ratio of Bayes risks is an increasing function of both $\alpha \geqslant 1$ and $a\in [0.5,1)$. Therefore, taking $\alpha=1$ and $a=0.5$ in (\ref{EXACT_ASYMP_ORDER_RATIO_RISKS}) asymptotically yields the smallest possible value of the ratio of such Bayes risks, namely, 1. This explains why the hyperparameter {\it a} in the definition of $\pi(\lambda_i^2)$ in (\ref{LAMBDA_PRIOR}), should be set at $a=0.5$ as a default choice compared to other values of $a\in (0.5,1)$ for the present multiple testing problem.

 \begin{remark}\label{OPT_CHOICE_TAU}
 Some important observations are to be made in this regard about the effect of choosing different values of $\tau$ depending on $p$, when $p$ is assumed to be known. Although some of these observations were already made in \citet{GTGC2015}, the present remark helps us understand such facts very clearly. For that we confine our attention to the class of horseshoe-type priors for which one has $a=0.5$. Note that, the type I and type II error probabilities of the $i$-th decision in (\ref{INDUCED_DECISION}), that is, $t_{1i}$ and $t_{2i}$, do not depend on $i$ and their common values are given by $t_1$ and $t_2$, respectively. Thus, the Bayes risk of the decision rules in (\ref{INDUCED_DECISION}) is given by $R_{OG}=mp(\frac{1-p}{p}t_1+t_2)$ (using (\ref{BAYES_RISK_GEN})). Let us now assume that $\tau$ is asymptotically of the order of $p^{\alpha}$, for some $\alpha>0$. Let us first consider the case when $0<\alpha<1$. Then, given $\alpha\in(0,1)$, there always exist some $\eta \in (0,1)$ and some $\delta \in (0,1)$, such that $0<\alpha<\eta(1-\delta)$. Let us now fix any $\zeta > 2/(\eta(1-\delta))$, such that $\alpha < 2/\zeta < \eta(1-\delta)$, that is, $\zeta\in (\frac{2}{\eta(1-\delta)},\frac{2}{\alpha})$. Under this condition, using Theorem \ref{THM_T1_UB_LB}, we obtain the following:
 \begin{equation}
  \frac{t_1}{p}
  \gtrsim \frac{p^{\zeta\alpha/2-1}}{\sqrt{\log\frac{1}{p}}} \rightarrow \infty \mbox{ if } p_n \rightarrow 0 \mbox{ as } n \rightarrow \infty,\nonumber
 \end{equation}
whence we have $R_{OG}/R_{Opt}^{BO}\rightarrow\infty$ as $n\rightarrow\infty$ within the asymptotic framework of \citet{BCFG2011}. Thus, the aforesaid asymptotic Bayes optimality result fails to hold in such situations. Consequently, for the present multiple testing problem, values of $\tau$ such that $\lim_{n\rightarrow\infty}\tau/p^{\alpha}\in(0,\infty)$, for $0<\alpha<1$, are not recommended for the horseshoe-type priors. On the other hand, the limiting value of the ratio $R_{OG}/R_{Opt}^{BO}$ as given by (\ref{EXACT_ASYMP_ORDER_RATIO_RISKS}), is non-decreasing in $\alpha \geqslant 1$, thereby attaining its minimum at $\alpha=1$. Using the preceding observations, it follows that, for the horseshoe-type priors, the optimal choice of $\tau$ is asymptotically of the order of $p$, when the latter is assumed to be known. In all other cases where $0.5<a<1$, though this choice of $\tau$ is not optimal in a strict mathematical sense, it still yields a Bayes risk that is within a constant factor of the Oracle risk (\ref{OPT_BAYES_RISK}) asymptotically, the factor being close to 1. Hence, for the present multiple testing problem, $\tau$ asymptotically of the order of $p$ should be regarded as the optimal choice of $\tau$ when $p$ is assumed to be known. It should however be noted in this context that the optimal choices of $\tau$ for the horseshoe-type priors such as those considered in Theorem \ref{THM_3.4} of Section 3, are not the same for simultaneous testing and estimation of independent normal means, and differ by a logarithmic factor of the proportion of non-zero means only.
\end{remark}

The next theorem shows that, within the asymptotic framework of \citet{BCFG2011}, when $p_n \propto n^{-\beta}$ for $0<\beta<1$, the empirical Bayes induced testing procedures (\ref{INDUCED_DECISION_EB}) based on the horseshoe-type priors asymptotically attain the optimal Bayes risk in (\ref{OPT_BAYES_RISK}) up to the correct constant, and hence, are ABOS. As already mentioned before, proof of this theorem follows using the arguments used in the proofs of Theorem 3.2 of \citet{GTGC2015} and Theorem \ref{THM_BAYES_RISK_EXACT_ASYMP_ORDER} of the present paper, and hence it is skipped.
 
 \begin{thm}\label{THM_ABOS_EBAYES_HSTYPE}
  Suppose $X_1,\dots,X_n$ are i.i.d. observations generated according to the two-groups normal mixture model (\ref{TWO_GROUP_X}) and suppose Assumption \ref{ASSUMPTION_ASYMP} is satisfied by the sequence of vectors $(\psi^2, p)$ defining the two-groups model (\ref{TWO_GROUP_X}), with $p \propto n^{-\beta}$, for some $0<\beta<1$. Suppose we wish to test simultaneously $H_{0i}:\nu_i=0$ vs $H_{Ai}:\nu_i=1$, for $i=1,\dots,n$, using the classification rule (\ref{INDUCED_DECISION_EB}) induced by the general class of one-group shrinkage priors where the prior distribution of the local shrinkage parameter $\pi(\lambda_i^2)$ is given by (\ref{LAMBDA_PRIOR}) with $a=0.5$, and the corresponding slowly varying component $L(\cdot)$ satisfies Assumption \ref{ASSUMPTION_LAMBDA_PRIOR}. Then the Bayes risk of the empirical Bayes testing procedure (\ref{INDUCED_DECISION_EB}), denoted $R^{EB}_{OG}$, satisfies
   \begin{equation}\label{EB_ABOS}
    \lim_{n\rightarrow\infty}\frac{R^{EB}_{OG}}{R_{Opt}^{BO}}=1,
  \end{equation}
  that is, the corresponding empirical Bayes decisions (\ref{INDUCED_DECISION_EB}) will be ABOS.
   \end{thm}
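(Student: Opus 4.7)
The lower bound $\liminf_{n\to\infty} R^{EB}_{OG}/R_{Opt}^{BO} \geq 1$ holds trivially, because the Bayes Oracle (\ref{BAYES_RULE}) is by construction the rule that minimizes the Bayes risk in (\ref{BAYES_RISK_GEN}) under the two-groups model (\ref{TWO_GROUP_X}); hence every competing procedure, including the empirical Bayes rule (\ref{INDUCED_DECISION_EB}), has Bayes risk at least $R_{Opt}^{BO}$. The task therefore reduces to proving the matching upper bound $\limsup_n R^{EB}_{OG}/R_{Opt}^{BO} \leq 1$.

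By the exchangeability of the $X_i$'s under model (\ref{TWO_GROUP_X}) and the symmetric dependence of $\widehat\tau$ on the data, the error probabilities $\widetilde t_{1i}$ and $\widetilde t_{2i}$ do not depend on $i$; denote their common values by $\widetilde t_1$ and $\widetilde t_2$. Then $R^{EB}_{OG} = n(1-p)\widetilde t_1 + np\,\widetilde t_2$, and I would bound the two summands separately using Theorems \ref{THM_T1_EB_UB} and \ref{THM_T2_EB_UB} with $a=1/2$. For the type I term, dividing each of the three pieces in Theorem \ref{THM_T1_EB_UB} by $R_{Opt}^{BO} \sim np\,(2\Phi(\sqrt C)-1)$ and using $p\propto n^{-\beta}$ with $0<\beta<1$ reduces matters to: (i) showing $\alpha_n^{2a}/(p\sqrt{\log(1/\alpha_n^2)}) \to 0$; (ii) checking that $n^{1-c_1/2}/(p\sqrt{\log n}) \to 0$, which holds because $c_1\geq 2$ and $\beta<1$; and (iii) noting that the exponential term $\exp\{-2(2\log 2-1)\beta_0 np(1+o(1))\}/(np)$ vanishes because $np=n^{1-\beta}\to\infty$. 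Together these give $n(1-p)\widetilde t_1 = o(R_{Opt}^{BO})$.

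For the type II term, Theorem \ref{THM_T2_EB_UB} with $a=1/2$ yields $\widetilde t_2 \leq [2\Phi(\sqrt{\zeta C/2})-1](1+o(1))$ for every fixed $\zeta>2/(\eta(1-\delta))$ with $\eta,\delta\in(0,1)$. Combining with the type I estimate,
\begin{equation*}
\limsup_{n\to\infty} \frac{R^{EB}_{OG}}{R_{Opt}^{BO}} \leq \frac{2\Phi(\sqrt{\zeta C/2})-1}{2\Phi(\sqrt C)-1}
\end{equation*}
for every admissible pair $(\eta,\delta)$. Taking a sequence $(\eta_k,\delta_k)\to(1,0)$ forces $\zeta_k\downarrow 2$, and the continuity of $\Phi$ drives the right hand side to $1$, establishing (\ref{EB_ABOS}). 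This last parameter-pushing argument mirrors exactly the one used at the end of Theorem \ref{THM_BAYES_RISK_EXACT_ASYMP_ORDER}.

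The main technical obstacle is step (i) above, namely that $\alpha_n$ is of strictly smaller order than $p\sqrt{\log(1/\alpha_n^2)}$ under the two-groups model. This requires a careful split
\begin{equation*}
\alpha_n = (1-p)\cdot 2\bigl(1-\Phi(\sqrt{c_1\log n})\bigr) + p\cdot 2\Bigl(1-\Phi\bigl(\sqrt{c_1\log n/(1+\psi^2)}\bigr)\Bigr),
\end{equation*}
to which one applies Mills' ratio on the null piece (yielding a polynomial term of order $n^{-c_1/2}/\sqrt{\log n}$) and invokes the scaling $\log v/u \to C$ from Assumption \ref{ASSUMPTION_ASYMP} on the alternative piece to show its dominance is of order $p$ times a vanishing factor. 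This is essentially the calculation already carried out in the proof of Theorem 3.2 of \citet{GTGC2015}, and once it is in place the remainder of the argument is routine.
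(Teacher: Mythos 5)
Your proof is correct and follows essentially the route the paper itself indicates: the paper omits the proof of Theorem \ref{THM_ABOS_EBAYES_HSTYPE}, stating only that it follows by combining the empirical Bayes error bounds of Theorems \ref{THM_T1_EB_UB} and \ref{THM_T2_EB_UB} with the infimum-over-$(\eta,\delta,\zeta)$ limit argument of Theorem \ref{THM_BAYES_RISK_EXACT_ASYMP_ORDER} and the computations in Theorem 3.2 of \citet{GTGC2015} --- which is exactly your decomposition (trivial lower bound from the optimality of the Bayes Oracle, type I contribution negligible, type II contribution bounded via $\zeta\downarrow 2$). Two small slips to fix. First, in your step (ii) the correctly normalized quantity is $n^{-c_1/2}/(p\sqrt{\log n})=n^{\beta-c_1/2}/\sqrt{\log n}\rightarrow 0$; the extra factor of $n$ in your expression $n^{1-c_1/2}/(p\sqrt{\log n})$ (the $n$ from $n(1-p)\widetilde t_1$ should cancel against the $n$ in $R_{Opt}^{BO}\sim np(2\Phi(\sqrt C)-1)$) would make the displayed claim false for $c_1=2$, since then it equals $n^{\beta}/\sqrt{\log n}\rightarrow\infty$. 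Second, in step (i) the alternative piece of $\alpha_n$ is of exact order $p$, not $p$ times a vanishing factor: under Assumption \ref{ASSUMPTION_ASYMP} with $p\propto n^{-\beta}$ one has $\psi^2\sim 2\beta\log n/C$, so the argument $\sqrt{c_1\log n/(1+\psi^2)}$ of $\Phi$ tends to a finite positive constant. What saves the argument is only that $\alpha_n=O(p)$ (since $n^{-c_1/2}=o(p)$), after which the factor $\sqrt{\log(1/\alpha_n^2)}\rightarrow\infty$ in the denominator of the bound of Theorem \ref{THM_T1_EB_UB} supplies the required $o(1)$.
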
  
 
\section{Discussion}
We studied in this paper various theoretical properties of a general class of heavy-tailed continuous shrinkage priors in terms of the quadratic minimax risk for estimating a multivariate normal mean vector which is known to be sparse in the sense of being nearly black. It is shown that Bayes estimators arising out of this general class asymptotically attain the minimax risk with respect to the $l_2$ norm, possibly up to some multiplicative constants. In particular, it is shown that for the horseshoe-type priors (defined in Section 2) such as the horseshoe, the Strawderman$-$Berger and the standard double Pareto priors, the corresponding Bayes estimators are asymptotically minimax in the sense that they attain the corresponding minimax risk under the $l_2-$norm up to the correct constant. Optimal rate of posterior contraction of these prior distributions around the truth in terms of the corresponding quadratic minimax error rate has also been established. We provided a novel unifying theoretical treatment that holds for a very broad class of one-group shrinkage priors. Another major contribution of this work is to show that shrinkage priors which are appropriately heavy tailed are good enough in order to attain the minimax optimal rate of contraction and that one doesn't need a pole at the origin, provided that the global tuning parameter is carefully chosen. This provides a partial answer to the question raised in \citet{PKV2014} already discussed in the introduction. As we already mentioned in Section 3 that one possible reason for such good performance of the kind of one-group shrinkage priors studied in this paper, is their ability to shrink the noise observations back to the origin, while leaving the large signals mostly unshrunk. Moreover, choice of the hyperparameter $a$ also plays a significant role for optimal posterior contraction of these priors. We believe that the theoretical results in this paper can be extended further for a more general class of one-group priors through exploiting properties of general slowly varying functions such as those considered in \citet{GTGC2015}. It would be worth mentioning that the concentration and moment inequalities of \citet{GTGC2015} and results like Lemma \ref{LEMMA_MOST_IMP} or Lemma \ref{MOST_IMP_LEMMA_ABOS} of the present paper, are extremely useful for analyzing and understanding various theoretical properties of such one-group shrinkage priors, and these results form the basis of both the asymptotic minimaxity and asymptotic Bayes optimality of the horseshoe-type priors under the assumption of sparsity. This makes us hopeful that the techniques employed in the present article would prove to be important ingredients for optimality studies of one-group continuous shrinkage priors.\newline


In the latter half of this paper, we also studied the asymptotic risk properties of induced decisions based on our general class of continuous shrinkage priors in the context of multiple testing within a Bayesian decision theoretic framework. A major theoretical contribution of this work is to show that within the asymptotic scheme of \citet{BCFG2011}, such induced decisions based on the horseshoe-type priors become asymptotically Bayes optimal under sparsity. To the best of our knowledge, this is the first such result in the Bayesian literature where the two-groups answer can be exactly achieved asymptotically by an one-group formulation under the assumption of sparsity. Another important contribution of the present work is to theoretically establish the fact that, when $a=0.5$, the optimal choice of the global variance component $\tau$ should be asymptotically of the order of the proportion of true alternatives $p$, when $p$ is assumed to be known. Moreover, the present work also provides strong theoretical support in favor of using $a=0.5$ as a default choice in our one-group prior specification. We hope that similar optimality results can be obtained for a more general class of one-group tail robust priors like those considered in \citet{GTGC2015} for the present multiple testing problem by carefully exploiting properties of slowly varying functions, together with the powerful concentration and moment inequalities of \citet{GTGC2015} and this way, one can relax the condition on the corresponding slowly varying component $L(\cdot)$ as given in Theorem 1 and Theorem 2 of \citet{GTGC2015} for the case $a=0.5$.\newline

Over the past few years, one-group shrinkage priors have been gaining increasing popularity in the Bayesian literature for modeling sparse high-dimensional data instead of the more natural two-groups model. However, not much was known about their various theoretical properties until very recently. To the best of our knowledge, \citet{BPPD2012} and \citet{DG2013} first studied certain asymptotic optimality properties of the Dirichlet$-$Laplace (DL) priors and the horseshoe prior, respectively, followed by the more recent works of \citet{PKV2014} and \citet{GTGC2015}. We hope that the present work is a useful contribution towards that end and provides important theoretical justifications from both frequentist and Bayesian view point in favor of the use of such kind of one-group priors together with some useful guidelines regarding the choice of the underlying hyperparameters while deciding over a one-group formulation in a given problem. However, an interesting problem that remains open till date is to show asymptotic optimality properties of a full Bayes approach by assigning a hyperprior to the global shrinkage parameter $\tau$. This applies in equal measure in both the problems of simultaneous testing and estimation. We hope to address this problem elsewhere in future.      

\appendix
\section*{Appendix}
\label{app}
\begin{lem}\label{LEM_MOMENT_INEQ}
For the general class of shrinkage priors (\ref{LAMBDA_PRIOR}) satisfying Assumption \ref{ASSUMPTION_LAMBDA_PRIOR} the following holds true for any $0 <  a < 1$:
  \begin{equation}\label{KAPPA_EXP}
 E(1-\kappa \big| x,\tau) \leqslant \frac{KM}{a(1-a)} e^{\frac{x^2}{2}}\tau^{2a}(1+o(1)), \mbox{ each fixed $x \in \mathbb{R}$,} \nonumber
  \end{equation}
 where $\kappa=\frac{1}{1+\lambda^2\tau^2}$ denote the shrinkage coefficients and the $o(1)$ term depends only on $\tau$ such that $\lim_{\tau \rightarrow 0}o(1)=0$.
\end{lem}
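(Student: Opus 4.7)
The plan is to write $E(1-\kappa\mid x,\tau)$ as a ratio $N(x,\tau)/m(x\mid\tau)$, where
\[
N(x,\tau)=\int_0^\infty \frac{\lambda^2\tau^2}{1+\lambda^2\tau^2}\,\phi(x;0,1+\lambda^2\tau^2)\,\pi(\lambda^2)\,d\lambda^2
\]
and $m(x\mid\tau)=\int_0^\infty \phi(x;0,1+\lambda^2\tau^2)\,\pi(\lambda^2)\,d\lambda^2$ is the marginal density of $X$ given $\tau$. I would produce an $x$-free upper bound on $N(x,\tau)$ of order $\tau^{2a}$, and separately show $m(x\mid\tau) \to \phi(x)$ as $\tau\to 0$ for each fixed $x$; the factor $e^{x^2/2}$ in the target bound then emerges automatically from $(\sqrt{2\pi}\,\phi(x))^{-1}=e^{x^2/2}$.

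For the numerator, I would first apply the uniform bound $L(\lambda^2)\leqslant M$ from Assumption \ref{ASSUMPTION_LAMBDA_PRIOR}(2) and then perform the change of variables $s=\lambda^2\tau^2$, which factors out $\tau^{2a}$ cleanly because $\pi(\lambda^2)\,d\lambda^2 = K\tau^{2a}s^{-a-1}L(s/\tau^2)\,ds$. Using the trivial bound $e^{-x^2/(2(1+s))}\leqslant 1$ then yields
\[
N(x,\tau)\leqslant \frac{KM\tau^{2a}}{\sqrt{2\pi}}\int_0^\infty \frac{s^{-a}}{(1+s)^{3/2}}\,ds,
\]
a quantity independent of $x$. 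To extract the target constant $\frac{1}{a(1-a)}$ from the resulting Beta-type integral $B(1-a,\tfrac12+a)$, I would split at $s=1$: on $(0,1)$, $\int_0^1 s^{-a}(1+s)^{-3/2}\,ds \leqslant \int_0^1 s^{-a}\,ds = \frac{1}{1-a}$; on $(1,\infty)$, $\int_1^\infty s^{-a}(1+s)^{-3/2}\,ds \leqslant \int_1^\infty s^{-a-3/2}\,ds = \frac{1}{a+1/2} \leqslant \frac{1}{a}$, and adding gives $\frac{1}{a}+\frac{1}{1-a}=\frac{1}{a(1-a)}$.

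For the denominator, I would invoke dominated convergence. Since $\phi(x;0,1+\lambda^2\tau^2)\leqslant (2\pi)^{-1/2}$ uniformly in $\tau$ and $\pi(\lambda^2)$ is a proper density on $(0,\infty)$, the function $(2\pi)^{-1/2}\pi(\lambda^2)$ is an integrable dominator. Pointwise, $\phi(x;0,1+\lambda^2\tau^2)\to \phi(x)$ as $\tau\to 0$ for each fixed $\lambda^2$, so DCT gives $m(x\mid\tau)\to \phi(x)\int_0^\infty\pi(\lambda^2)\,d\lambda^2 = \phi(x)$, i.e., $m(x\mid\tau)=\phi(x)(1+o(1))$ with the $o(1)$ depending only on $\tau$ for each fixed $x$.

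Combining the two bounds and using $(\sqrt{2\pi}\,\phi(x))^{-1}=e^{x^2/2}$ will yield
\[
E(1-\kappa\mid x,\tau) \leqslant \frac{KM\tau^{2a}/(\sqrt{2\pi}\,a(1-a))}{\phi(x)(1+o(1))} = \frac{KM}{a(1-a)}\,e^{x^2/2}\,\tau^{2a}(1+o(1)),
\]
as required. I do not expect a substantive obstacle here; the argument decomposes cleanly into an $x$-uniform upper bound on $N(x,\tau)$ and a standard DCT limit for $m(x\mid\tau)$. The only point requiring mild care is the splitting of the Beta integral that produces the sharp constant $\frac{1}{a(1-a)}$.
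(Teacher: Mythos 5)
Your decomposition, the change of variables $s=\lambda^2\tau^2$ to factor out $\tau^{2a}$, the bound $L\leqslant M$, and the splitting of the Beta-type integral at $s=1$ to produce exactly $\frac{1}{1-a}+\frac{1}{a}=\frac{1}{a(1-a)}$ are all correct and are in the same spirit as the technique the paper uses for the closely analogous Lemma \ref{LEMMA_IMPORTANT_INTEGRAL} (the paper itself only cites \citet{GTGC2015} for this particular lemma). The numerator bound is $x$-uniform, as you intend.

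The one point that needs repair is your treatment of the denominator. Applying dominated convergence directly to $m(x\mid\tau)\to\phi(x)$ gives, for each fixed $x$, a relative error that in principle depends on $x$; but the lemma asserts (and the paper's later use of it inside $\int_{-\zeta_\tau}^{\zeta_\tau}x^2e^{x^2/2}\,dx$ with $\zeta_\tau\to\infty$ in the proof of Theorem \ref{THM_3.1} actually requires) an $o(1)$ term that is a function of $\tau$ alone, uniformly over $x$. The fix is immediate and is exactly what the paper does in Lemma \ref{LEMMA_IMPORTANT_INTEGRAL}: since $1+\lambda^2\tau^2\geqslant 1$, one has $e^{-x^2/(2(1+\lambda^2\tau^2))}\geqslant e^{-x^2/2}$, hence
\begin{equation*}
m(x\mid\tau)\;\geqslant\;\frac{e^{-x^2/2}}{\sqrt{2\pi}}\int_0^\infty\frac{\pi(\lambda^2)}{\sqrt{1+\lambda^2\tau^2}}\,d\lambda^2
\;=\;\frac{e^{-x^2/2}}{\sqrt{2\pi}}\bigl(1+o(1)\bigr),
\end{equation*}
where now the dominated convergence is applied to the $x$-free integral $\int_0^\infty(1+\lambda^2\tau^2)^{-1/2}\pi(\lambda^2)\,d\lambda^2\to 1$, so the $o(1)$ genuinely depends only on $\tau$. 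With this one-line substitution for your denominator step, your argument delivers the stated bound with the stated uniformity.
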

\begin{proof}
 See \citet{GTGC2015}.
\end{proof}

\begin{lem}\label{LEM_CONCENTRATION_INEQ}
For every fixed $\tau > 0$, and each fixed $\eta, \delta \in (0,1),$ the posterior distribution of the shrinkage coefficients $\kappa=1/(1+\lambda^2\tau^2)$ based on the general class of shrinkage priors (\ref{LAMBDA_PRIOR}) satisfying Assumption \ref{ASSUMPTION_LAMBDA_PRIOR}, with $a > 0$, satisfies the following concentration inequality:
  \begin{eqnarray}
   \Pr(\kappa > \eta|x,\tau) &\leqslant& \frac{H(a,\eta,\delta)e^{-\frac{\eta(1-\delta)x^{2}}{2}}}{\tau^{2a}\Delta(\tau^2,\eta,\delta)}, \mbox{ uniformly in } x \in \mathbb{R},\nonumber\\
   \nonumber\\
  \mbox{where }  \Delta(\tau^2,\eta,\delta)&=&\xi(\tau^2,\eta,\delta)L\big(\frac{1}{\tau^2}(\frac{1}{\eta\delta}-1)\big),\nonumber\\ \nonumber\\
    \xi(\tau^2,\eta,\delta) &=& \frac{\int_{\frac{1}{\tau^2}\big(\frac{1}{\eta\delta}-1\big)}^{\infty}t^{-(a+\frac{1}{2}+1)}L(t)dt}{(a+\frac{1}{2})^{-1} \big(\frac{1}{\tau^2}\big(\frac{1}{\eta\delta}-1\big)\big)^{-(a+\frac{1}{2})}L(\frac{1}{\tau^2}\big(\frac{1}{\eta\delta}-1\big))},\quad  \mbox{and}\nonumber\\
    \nonumber\\
     H(a,\eta,\delta) &=& \frac{(a+\frac{1}{2}) (1-\eta\delta)^a}{ K(\eta\delta)^{(a+\frac{1}{2})}},\nonumber
  \end{eqnarray}
 where the term $\Delta(\tau^2,\eta,\delta)$ is such that $\lim_{\tau \rightarrow 0}\Delta(\tau^2,\eta,\delta)$ is a finite positive quantity for every fixed $\eta \in (0,1)$ and every fixed $\delta \in (0,1)$.
\end{lem}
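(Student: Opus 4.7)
The plan is to express the posterior probability as a ratio of two integrals over $\lambda^2$, upper bound the numerator using the one-sided restriction from the event $\{\kappa > \eta\}$, and lower bound the denominator by truncating to a different region with a slightly shifted threshold that introduces the factor $\delta$ into the exponent. Since $\{\kappa > \eta\}$ corresponds exactly to $\{\lambda^2 < \tau^{-2}(\eta^{-1} - 1)\}$, and the joint density of $(X,\lambda^2)$ given $\tau$ is proportional to $K(\lambda^2)^{-a-1}L(\lambda^2)(1+\lambda^2\tau^2)^{-1/2}\exp\{-x^2/(2(1+\lambda^2\tau^2))\}$, I would write $\Pr(\kappa > \eta \mid x, \tau) = N(x,\tau)/D(x,\tau)$, where $N$ and $D$ are the integrals of this joint density over $\lambda^2 \in (0, \tau^{-2}(\eta^{-1}-1))$ and over all of $(0,\infty)$, respectively.

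For the numerator, on the relevant region one has $1+\lambda^2\tau^2 < 1/\eta$, hence $(1+\lambda^2\tau^2)^{-1/2} \leq 1$ and $\exp\{-x^2/(2(1+\lambda^2\tau^2))\} \leq e^{-\eta x^2/2}$. Pulling the Gaussian bound out and using that $\int_0^\infty K(\lambda^2)^{-a-1}L(\lambda^2)\,d\lambda^2 = 1$, I obtain the trivial uniform bound $N(x,\tau) \leq e^{-\eta x^2/2}$. For the denominator, I would restrict the domain of integration to $\lambda^2 \geq A$, where $A := \tau^{-2}((\eta\delta)^{-1} - 1)$ is the key choice. On this tail, the inequality $1+\lambda^2\tau^2 \geq 1/(\eta\delta)$ gives $\exp\{-x^2/(2(1+\lambda^2\tau^2))\} \geq e^{-\eta\delta x^2/2}$, while $\lambda^2\tau^2 \geq (1-\eta\delta)/(\eta\delta)$ yields $1+\lambda^2\tau^2 \leq \lambda^2\tau^2/(1-\eta\delta)$ and hence $(1+\lambda^2\tau^2)^{-1/2} \geq \sqrt{1-\eta\delta}\,(\lambda^2\tau^2)^{-1/2}$. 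Combining these two inequalities and writing $t=\lambda^2$,
\[
D(x,\tau) \geq K\sqrt{1-\eta\delta}\,e^{-\eta\delta x^2/2}\,\tau^{-1} \int_A^\infty t^{-(a+1/2)-1}L(t)\,dt.
\]

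By the defining identity of $\xi(\tau^2,\eta,\delta)$, the tail integral equals $(a+1/2)^{-1} A^{-(a+1/2)} L(A)\,\xi(\tau^2,\eta,\delta)$. Substituting $A^{-(a+1/2)} = (\eta\delta/(1-\eta\delta))^{a+1/2}\tau^{2a+1}$ and collecting constants yields
\[
D(x,\tau) \geq \frac{K(\eta\delta)^{a+1/2}}{(a+1/2)(1-\eta\delta)^a}\,e^{-\eta\delta x^2/2}\,\tau^{2a}\,\Delta(\tau^2,\eta,\delta) = \frac{e^{-\eta\delta x^2/2}\,\tau^{2a}\,\Delta(\tau^2,\eta,\delta)}{H(a,\eta,\delta)}.
\]
Dividing $N(x,\tau)$ by this lower bound and combining the exponentials via $e^{-\eta x^2/2}/e^{-\eta\delta x^2/2} = e^{-\eta(1-\delta)x^2/2}$ produces the stated uniform inequality. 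Finally, as $\tau \to 0$ one has $A \to \infty$, so by the first part of Assumption \ref{ASSUMPTION_LAMBDA_PRIOR}, $L(A) \to \lim_{t\to\infty}L(t) \in (0,\infty)$, and by Karamata's theorem for slowly varying functions (applicable because the exponent $a + 1/2$ is positive) $\xi(\tau^2,\eta,\delta) \to 1$; hence $\lim_{\tau\to 0}\Delta(\tau^2,\eta,\delta)$ is finite and strictly positive.

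The manipulations are otherwise routine, but the single conceptual move on which the entire bound hinges is the deliberate choice of the denominator truncation threshold at $1/(\eta\delta)$ rather than the natural value $1/\eta$. This ``overshoot'' deliberately weakens the Gaussian lower bound in $D$ from $e^{-\eta x^2/2}$ to $e^{-\eta\delta x^2/2}$; when divided into the matching numerator bound this is precisely what produces the slack factor $(1-\delta)$ in the exponent of the final inequality, which is the form needed by Theorem \ref{THM_T1_UB_LB} and its downstream applications. A secondary technical point is the Karamata asymptotic $\xi(\tau^2,\eta,\delta) \to 1$, which guarantees that $\Delta(\tau^2,\eta,\delta)$ behaves like a positive constant in the limit and so does not corrupt the clean $\tau^{2a}$ scaling; the behaviour of $L$ near $0$ never enters the argument because the lower bound region avoids small values of $\lambda^2$ entirely.
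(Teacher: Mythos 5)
Your proof is correct, and it is essentially the intended argument: the paper itself does not prove this lemma but defers to \citet{GTGC2015}, and the very form of the stated constants (in particular the definition of $\xi$ as the tail integral divided by its Karamata asymptote, and the appearance of $\eta\delta$ throughout $H$ and $\Delta$) shows that the source proof is exactly your ratio-of-integrals argument with the denominator truncated at $\lambda^2\geq\tau^{-2}((\eta\delta)^{-1}-1)$. All steps check out, including the normalization $\int_0^\infty K(\lambda^2)^{-a-1}L(\lambda^2)\,d\lambda^2=1$ for the numerator, the algebra recovering $H(a,\eta,\delta)$, and the use of Karamata's theorem (valid here since $a+\tfrac{1}{2}>0$ and $L$ is bounded) together with Assumption \ref{ASSUMPTION_LAMBDA_PRIOR}(1) to conclude $\lim_{\tau\to 0}\Delta(\tau^2,\eta,\delta)\in(0,\infty)$.
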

\begin{proof}
 See \citet{GTGC2015}.
\end{proof}

We now prove an extremely important lemma which together with one of its variants, namely, Lemma \ref{MOST_IMP_LEMMA_ABOS}, form the basis of all the major theoretical results deduced in this paper.

\begin{lem}\label{LEMMA_MOST_IMP}
Let us consider the general class of shrinkage priors (\ref{LAMBDA_PRIOR}) satisfying Assumption \ref{ASSUMPTION_LAMBDA_PRIOR}, with $a>0$. Then, for each fixed $\tau\in(0,1)$ and given any $c > 2$, the absolute difference between the Bayes estimators $T_{\tau}(x) $ based on the aforesaid class of shrinkage priors and an observation $x$, can be bounded above by a real valued function $h(\cdot,\tau)$, depending on $c$, and satisfying the following:
 
For any $\rho > c$,
\begin{eqnarray}
 \lim_{\tau \downarrow 0}\sup_{|x| > \sqrt{\rho \log\big(\frac{1}{\tau^{2a}}\big)}} h(x,\tau) = 0. \nonumber
 \end{eqnarray}
\end{lem}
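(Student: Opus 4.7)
I would take $h(x,\tau):=|x|\,E(\kappa\mid x,\tau)$, which by the posterior-mean identity \eqref{POST_MEAN_OG2} equals $|T_\tau(x)-x|$ exactly, so the stated pointwise ``upper bound'' requirement is trivially met. The task then reduces to proving that $\sup_{|x|>\sqrt{\rho\log(1/\tau^{2a})}}|x|\,E(\kappa\mid x,\tau)\to 0$ as $\tau\downarrow 0$ for every $\rho>c>2$. (If an explicit formula is wanted, the analysis below delivers the majorant $h(x,\tau)=2C(a)/|x|$, valid on the prescribed regime, in which case $h$ depends on $c$ through the regime on which it is constructed.) The simple splitting bound $E(\kappa\mid x,\tau)\le\eta+\Pr(\kappa>\eta\mid x,\tau)$ from Lemma \ref{LEM_CONCENTRATION_INEQ} is insufficient here, because the leftover term $\eta|x|$ is unbounded on $\{|x|>\sqrt{\rho\log(1/\tau^{2a})}\}$, so a direct Laplace-type analysis of $E(\kappa\mid x,\tau)$ is required.

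\textbf{Integral representation and Laplace rescaling.} The first step is to write $E(\kappa\mid x,\tau)$ as a ratio of integrals by marginalizing over $\lambda^2$ via $\pi(\lambda^2)=K(\lambda^2)^{-a-1}L(\lambda^2)$, substituting $v=\lambda^2\tau^2$, and then rescaling by $s=x^2/\{2(1+v)\}$ so the Gaussian factor becomes $e^{-s}$. This produces
$$E(\kappa\mid x,\tau)=\frac{2}{x^2}\cdot\frac{\int_0^{x^2/2}s^{a+1/2}\,(1-2s/x^2)^{-(a+1)}\,L\!\big(\tfrac{x^2-2s}{2s\tau^2}\big)\,e^{-s}\,ds}{\int_0^{x^2/2}s^{a-1/2}\,(1-2s/x^2)^{-(a+1)}\,L\!\big(\tfrac{x^2-2s}{2s\tau^2}\big)\,e^{-s}\,ds}.$$
The transformation concentrates the mass of both integrals on $s$-values of order unity and, crucially, produces the explicit prefactor $2/x^2$ that will drive the $|x|$-decay.

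\textbf{Uniform control of the ratio.} On $|x|^2>\rho\log(1/\tau^{2a})=2a\rho\log(1/\tau)$, the argument $(x^2-2s)/(2s\tau^2)$ of $L$ dominates $\tau^{-2}$ for $s$ in any fixed compact subset $[\delta_0,M_0]\subset(0,\infty)$, and hence tends to $+\infty$ uniformly in $s$ as $\tau\downarrow 0$. By Assumption \ref{ASSUMPTION_LAMBDA_PRIOR} one then has $L(\cdot)\in[c_0,M]$ on the bulk for all $\tau$ small enough. The two side contributions are handled by splitting: the $s\in(0,\delta_0)$ contribution is dominated by $M\int_0^{\delta_0}s^{a-1/2}ds$, finite since $a>0$; the $s\in(M_0,x^2/2)$ contribution is dominated by $e^{-s}$ decay combined with $L\le M$ and polynomial factors, the apparent singularity of $(1-2s/x^2)^{-(a+1)}$ at $s=x^2/2$ being tamed by the exponential since $x^2/2\to\infty$ in this regime. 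A dominated-convergence / Karamata-type argument then shows that the ratio of the two $s$-integrals is bounded by a finite constant $C(a)$ (depending only on $a,c_0,M$) for all sufficiently small $\tau$, uniformly over $|x|>\sqrt{\rho\log(1/\tau^{2a})}$; in fact the ratio converges to $\Gamma(a+3/2)/\Gamma(a+1/2)=a+1/2$.

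\textbf{Conclusion and main obstacle.} Combining the above, $E(\kappa\mid x,\tau)\le 2C(a)/x^2$, whence $h(x,\tau)\le 2C(a)/|x|\le 2C(a)/\sqrt{\rho\log(1/\tau^{2a})}\to 0$ as $\tau\downarrow 0$, as required. The main technical obstacle is precisely the uniformity in the third step: one must perform the slowly-varying asymptotic for $L$ simultaneously in $x$ and $\tau$, despite $L$ being pinned below (by $c_0$) only beyond a threshold $t_0$ in Assumption \ref{ASSUMPTION_LAMBDA_PRIOR}. The splitting device and the exponential decay of $e^{-s}$ are what make the tail contributions uniformly negligible and let the bulk deliver a constant independent of the regime parameters. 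The hypothesis $c>2$ is not essential for this lemma in isolation, but it provides the margin that is used when the lemma is subsequently invoked in the proofs of Theorems \ref{THM_3.1}--\ref{THM_3.3} to absorb additional powers of $\tau$.
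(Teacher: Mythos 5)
Your integral representation after the substitution $s=\kappa x^2/2$ is correct and is essentially the same change of variables the paper uses ($u=x^2/(1+t\tau^2)=\kappa x^2$), and your treatment of the bulk $s=O(1)$ reproduces the paper's bound $h_1(x,\tau)\lesssim 1/|x|$. The genuine gap is in your third step: the claim that the ratio of the two $s$-integrals is bounded by a constant $C(a)$ \emph{uniformly} over $|x|>\sqrt{\rho\log(1/\tau^{2a})}$ is false in general, and the way you dismiss the endpoint $s\uparrow x^2/2$ is where it breaks. First, the singularity $(1-2s/x^2)^{-(a+1)}$ is not integrable for $a>0$ and is not ``tamed by the exponential'' ($e^{-s}\geqslant e^{-x^2/2}>0$ there); what makes the integral converge is the vanishing of $L$ at $0^+$ forced by properness of $\pi(\lambda^2)=K(\lambda^2)^{-a-1}L(\lambda^2)$. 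Quantifying that contribution (substitute $w=(1-\kappa)/\tau^2$) shows the region $\kappa>\eta$ contributes mass of order $\tau^{-2a}e^{-\eta x^2/2}$ to both integrals, against a bulk of order $(1/x^2)^{a\pm 1/2}$. At the boundary $x^2=\rho\log(1/\tau^{2a})=2a\rho\log(1/\tau)$ the endpoint-to-bulk ratio behaves like $\tau^{a(\rho-2)}$ times logarithmic factors, which \emph{diverges} for $\rho<2$: the posterior for $\kappa$ then concentrates near $1$, $E(\kappa|x,\tau)\to 1$, and $|x|E(\kappa|x,\tau)\approx|x|\to\infty$. So your asserted majorant $2C(a)/x^2$ fails, your remark that ``the hypothesis $c>2$ is not essential for this lemma in isolation'' is incorrect, and the convergence of the ratio to $a+1/2$ holds only after the endpoint contribution has been shown to be negligible --- which is exactly what needs the restriction $\rho>2$.

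This missing piece is precisely what the paper supplies: it splits $xE(\kappa|x,\tau)$ into the part with $\kappa<\eta$ (your bulk, giving $h_1\lesssim 1/|x|$ for every $\rho>0$) and the part with $\kappa>\eta$, which it bounds by $|x|\Pr(\kappa>\eta|x,\tau)\leqslant |x|H(a,\eta,\delta)e^{-\eta(1-\delta)x^2/2}/\bigl(\tau^{2a}\Delta(\tau^2,\eta,\delta)\bigr)$ via Lemma \ref{LEM_CONCENTRATION_INEQ}. The supremum of that second term over $|x|>\sqrt{\rho\log(1/\tau^{2a})}$ tends to $0$ if and only if $\rho>2/(\eta(1-\delta))$, and letting $\eta\uparrow 1$, $\delta\downarrow 0$ is how the threshold $c>2$ arises. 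To repair your argument you must add a separate, explicit estimate of the $s\in(x^2(1-\varepsilon)/2,\,x^2/2)$ (equivalently $\kappa>\eta$, $\lambda^2$ small) contribution of the above $\tau^{-2a}e^{-\eta(1-\delta)x^2/2}$ type and verify that $|x|$ times it vanishes on the prescribed regime; this cannot be absorbed into a uniform constant $C(a)$.
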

\begin{proof}
By definition, 
\begin{eqnarray}
\mid T_{\tau}(x) - x\mid
&=& \mid xE(\kappa|x,\tau)\mid\nonumber\\
&=& \mid \frac{x\int_{0}^{1} \kappa\cdot \kappa^{a+\frac{1}{2}-1}(1-\kappa)^{-a-1}L(\frac{1}{\tau^2}(\frac{1}{\kappa}-1))e^{-\kappa x^2/2}d\kappa}{\int_{0}^{1} \kappa^{a+\frac{1}{2}-1}(1-\kappa)^{-a-1}L(\frac{1}{\tau^2}(\frac{1}{\kappa}-1))e^{-\kappa x^2/2}d\kappa}\mid\nonumber\\
&=& I(x,\tau), \mbox{ say.}\nonumber
\end{eqnarray}

Now observe that, for each fixed $\tau\in(0,1)$, the function $\mid T_{\tau}(x)-x\mid$ is symmetric in $x$ and it takes the value $0$ when $x=0$. Therefore, it would enough to find any non-negative function $h(x,\tau)$ that is symmetric in $x$ and satisfies the stated conditions. Hence, without any loss of generality, let us assume that $x>0$.\newline

Let us fix any $\eta \in (0,1) $ and any $\delta \in (0,1)$.\newline

Next, we observe that
\begin{equation}\label{IMP_DECOMPSOSITION}
 I(x,\tau) \leqslant I_{1}(x,\tau) + I_{2}(x,\tau)
\end{equation}
where $I_{1}(x,\tau)=\mid xE(\kappa 1\{\kappa < \eta \}\mid x,\tau)\mid$ and $I_{2}(x,\tau)=\mid xE(\kappa 1\{\kappa > \eta \} \mid x,\tau)\mid$.\newline

Now using the variable transformation $t=\frac{1}{\tau^2}(\frac{1}{\kappa}-1),$ we have the following:
\begin{eqnarray}
  I_{1}(x,\tau)
 &=& \mid \frac{x\int_{0}^{\eta} \kappa\cdot \kappa^{a+\frac{1}{2}-1}(1-\kappa)^{-a-1}L(\frac{1}{\tau^2}(\frac{1}{\kappa}-1))e^{-\kappa x^2/2}d\kappa}{\int_{0}^{1} \kappa^{a+\frac{1}{2}-1}(1-\kappa)^{-a-1}L(\frac{1}{\tau^2}(\frac{1}{\kappa}-1))e^{-\kappa x^2/2}d\kappa}\mid\nonumber\\
 &=& \mid \frac{x\int_{\frac{1}{\tau^2}(\frac{1}{\eta}-1)}^{\infty} \frac{1}{(1+t\tau^2)^{3/2}}t^{-a-1}L(t)e^{-\frac{x^2}{2(1+t\tau^2)}}dt}{\int_{0}^{\infty}\frac{1}{(1+t\tau^2)^{1/2}}t^{-a-1}L(t)e^{-\frac{x^2}{2(1+t\tau^2)}}dt}\mid\nonumber \\
 &\leqslant& \mid \frac{x\int_{\frac{1}{\tau^2}(\frac{1}{\eta}-1)}^{\infty}\frac{1}{(1+t\tau^2)^{3/2}}t^{-a-1}L(t)e^{-\frac{x^2}{2(1+t\tau^2)}}dt}{\int_{\frac{t_{0}}{\tau^2}}^{\infty}\frac{1}{(1+t\tau^2)^{1/2}}t^{-a-1}L(t)e^{-\frac{x^2}{2(1+t\tau^2)}}dt}\mid \mbox{}\nonumber\\
 &=& J_{1}(x,\tau)\mbox{ say,} \label{IMP_DECOMPSOSITION_1_1}
\end{eqnarray}

Next observe that $\frac{t_{0}}{\tau^2} > t_{0}$ as $\tau^2 < 1$. Hence $L(t) \geqslant c_{0}$ for every $t \geqslant \frac{t_{0}}{\tau^2}$. Also, the function $L$ is bounded by the constant $M >0$. Utilizing these two observations and using the variable transformation $u=\frac{x^2}{1+t\tau^2}$ in both the numerator and the denominator of $J_{1}(x,\tau)$ in (\ref{IMP_DECOMPSOSITION_1_1}), and writing $s=\frac{1}{1+t_{0}} \in (0,1)$, we see that the term $J_{1}(x,\tau)$ can be bounded above as follows:
\begin{eqnarray}
J_{1}(x,\tau)
&\leqslant& \frac{M}{c_{0}} \mid x \frac{\int_{0}^{\eta x^2}e^{-u/2}\big(\frac{u}{x^2}\big)^{3/2}\big(\frac{1}{\tau^2}\big(\frac{x^2}{u}-1\big)\big)^{-a-1} \frac{x^2}{\tau^2u^2}du}{\int_{0}^{sx^2}e^{-u/2}\big(\frac{u}{x^2}\big)^{1/2}\big(\frac{1}{\tau^2}\big(\frac{x^2}{u}-1\big)\big)^{-a-1}\frac{x^2}{\tau^2u^2}du} \mid \nonumber\\
&=& \frac{M}{c_{0}} \mid \frac{1}{x} \cdot \frac{\int_{0}^{\eta x^2}e^{-u/2}u^{a+3/2-1}\big(1-\frac{u}{x^2}\big)^{-a-1}du}{\int_{0}^{s x^2}e^{-u/2}u^{a+1/2-1}\big(1-\frac{u}{x^2}\big)^{-a-1}du} \mid\nonumber
\end{eqnarray}
Note that when $0 < u < \eta x^2$ we have $0 < \frac{u}{x^2} < \eta < 1$, that is, $ 1-\eta < 1 -\frac{u}{x^2} < 1$. Similarly, we have $1-s < 1 -\frac{u}{x^2} < 1$ when $0 < u < s x^2$. Using these observations we obtain,
\begin{eqnarray}
J_{1}(x,\tau)
 &\leqslant& \frac{M}{c_{0}(1-\eta)^{1+a}} \mid \frac{1}{x} \cdot \frac{\int_{0}^{\eta x^2}e^{-u/2}u^{a+3/2-1}du}{\int_{0}^{s x^2}e^{-u/2}u^{a+1/2-1}du} \mid  \nonumber\\
 &\leqslant& \frac{M}{c_{0}(1-\eta)^{1+a}} \mid \frac{1}{x} \cdot \frac{\int_{0}^{\infty}e^{-u/2}u^{a+3/2-1}du}{\int_{0}^{s x^2}e^{-u/2}u^{a+1/2-1}du} \mid  \nonumber\\
 &=& h_{1}(x,\tau)\mbox{ say},\label{IMP_DECOMPSOSITION_1_2}
\end{eqnarray}
where $h_{1}(x,\tau)=C_{\textasteriskcentered} \big[\mid x \int_{0}^{s x^2}e^{-u/2}u^{a+1/2-1}du \mid\big]^{-1}  $ for some global constant $C_{\textasteriskcentered} \equiv C_{\textasteriskcentered}(a,\eta,L) > 0$ which is independent of both $x$ and $\tau$. Note that the function $h_{1}(x,\tau)$ is actually independent of $\tau$ and depends on $x$ only.\newline

Next we observe that,
\begin{eqnarray}
I_{2}(x,\tau)
&=&\mid xE(\kappa 1\{\kappa > \eta \} \mid x,\tau) \mid \nonumber\\
&\leqslant& \mid x\Pr(\kappa > \eta \mid x,\tau) \mid\nonumber\\
&\leqslant& \mid x\frac{H(a,\eta,\delta)e^{-\frac{\eta(1-\delta) x^2}{2}}}{\tau^{2a}\Delta(\tau^2,\eta,\delta)} \mid \nonumber\\
&=& h_{2}(x,\tau)\mbox{ say},\label{IMP_DECOMPSOSITION_2}
\end{eqnarray}
 
Let $h(x,\tau) = h_{1}(x,\tau)+h_{2}(x,\tau)$. Therefore combining (\ref{IMP_DECOMPSOSITION}), (\ref{IMP_DECOMPSOSITION_1_1}), (\ref{IMP_DECOMPSOSITION_1_2}) and (\ref{IMP_DECOMPSOSITION_2}), we finally obtain for every $x \in \mathbb{R} $ and $\tau\in(0,1)$,
\begin{equation}
 \mid T_{\tau}(x)-x \mid \leqslant h(x,\tau).\label{UB_FUNCTION}
\end{equation}

Note that the function $h(x,\tau)$ defined above is symmetric in $x$ about the origin. Now observe that the function $h_{1}(x,\tau)$ is strictly decreasing in $|x|$. Therefore, for any fixed $\tau\in(0,1)$ and every $\rho > 0$,
\begin{eqnarray}
 \sup_{|x| > \sqrt{\rho \log\big(\frac{1}{\tau^{2a}}\big)}} h_{1}(x,\tau)
 &\leqslant& C_{\textasteriskcentered} \big[\mid \sqrt{\rho \log\big(\frac{1}{\tau^{2a}}\big)} \int_{0}^{s\rho \log\big(\frac{1}{\tau^{2a}}\big)}e^{-u/2}u^{a+1/2-1}du \mid\big]^{-1} \nonumber
\end{eqnarray}
implying that 
\begin{equation}\label{LIMSUP_1}
 \lim_{\tau \downarrow 0}\sup_{|x| > \sqrt{\rho \log\big(\frac{1}{\tau^{2a}}\big)}} h_{1}(x,\tau) = 0.
\end{equation}

Again the function $h_{2}(x,\tau)$ is eventually decreasing in $|x|$. Therefore, for all sufficiently small $\tau\in(0,1)$,
\begin{equation}
 \sup_{|x| > \sqrt{\rho \log\big(\frac{1}{\tau^{2a}}\big)}} h_{2}(x,\tau) \leqslant h_{2}(\sqrt{\rho \log\big(\frac{1}{\tau^{2a}}\big)},\tau).\nonumber
\end{equation}

Let $\beta\equiv \beta(\eta,\delta) = \lim_{\tau \rightarrow 0} \Delta(\tau^2,\eta,\delta)$ for every fixed $\eta\in(0,1)$ and every fixed $\delta \in (0,1)$. Then $ 0 < \beta < \infty$ which follows from Lemma \ref{LEM_CONCENTRATION_INEQ}. Then, 
\begin{eqnarray}
\lim_{\tau \rightarrow 0} h_{2}(\sqrt{\rho \log\big(\frac{1}{\tau^{2a}}\big)},\tau)
&=& \frac{1}{\beta} \lim_{\tau \rightarrow 0} \mid \tau^{-2a}\sqrt{\rho\log\big(\frac{1}{\tau^{2a}}\big)}e^{- \frac{\eta(1-\delta)}{2}\rho\log\big(\frac{1}{\tau^{2a}}\big)}\mid \nonumber\\
&=& \frac{\sqrt{\rho}}{\alpha} \lim_{\tau \rightarrow 0} \big(\tau^{2a})^{\frac{\eta(1-\delta)}{2}\big(\rho-\frac{2}{\eta(1-\delta)}\big)} \sqrt{\log\big(\frac{1}{\tau^{2a}}\big)}\nonumber\\
&=& \left\{ \begin{array}{rl}
0 &\mbox{ if $\rho > \frac{2}{\eta(1-\delta)} $} \\
\infty &\mbox{ otherwise,}
\end{array}\right.\nonumber
\end{eqnarray}

whence it follows that
\begin{eqnarray}
\lim_{\tau \rightarrow 0} \sup_{|x| > \sqrt{\rho \log\big(\frac{1}{\tau^{2a}}\big)}} h_{2}(x,\tau)
&=& \left\{ \begin{array}{rl}
0 &\mbox{ if $\rho > \frac{2}{\eta(1-\delta)} $} \\
\infty &\mbox{ otherwise.}
\end{array}\right.\label{LIMSUP_2}
\end{eqnarray}

Combining (\ref{LIMSUP_1}) and (\ref{LIMSUP_2}) together with the facts that 
$$\lim_{\tau \rightarrow 0} \sup_{|x| > \sqrt{\rho \log\big(\frac{1}{\tau^{2a}}\big)}} h(x,\tau) \leqslant \lim_{\tau \rightarrow 0} \sup_{|x| > \sqrt{\rho \log\big(\frac{1}{\tau^{2a}}\big)}} h_{1}(x,\tau) + \lim_{\tau \rightarrow 0} \sup_{|x| > \sqrt{\rho \log\big(\frac{1}{\tau^{2a}}\big)}} h_{2}(x,\tau) $$
and
$$\lim_{\tau \rightarrow 0} \sup_{|x| > \sqrt{\rho \log\big(\frac{1}{\tau^{2a}}\big)}} h(x,\tau) \geqslant \lim_{\tau \rightarrow 0} \sup_{|x| > \sqrt{\rho \log\big(\frac{1}{\tau^{2a}}\big)}} h_{2}(x,\tau) $$
it follows that
\begin{eqnarray}
\lim_{\tau \rightarrow 0} \sup_{|x| > \sqrt{\rho \log\big(\frac{1}{\tau^{2a}}\big)}} h(x,\tau)
&=& \left\{ \begin{array}{rl}
0 &\mbox{ if $\rho > \frac{2}{\eta(1-\delta)} $} \\
\infty &\mbox{ otherwise.}
\end{array}\right.\label{LIMSUP_FINAL}
\end{eqnarray}

Observe that by choosing $\eta$ appropriately close to 1 and $\delta$ close to 0, any real number larger than 2 can be expressed in the form $\frac{2}{\eta(1-\delta)}.$ For example, taking $\eta=\frac{5}{6}$ and $\delta=\frac{1}{5}$ we obtain $\frac{2}{\eta(1-\delta)}=3$. Hence, given any $c > 2$, let us choose $0 < \eta, \delta < 1 $ such that $c=\frac{2}{\eta(1-\delta)}$. Clearly, the choice of $h(\cdot,\tau)$ depends on $c$. This, coupled with (\ref{UB_FUNCTION}) and (\ref{LIMSUP_FINAL}), completes the proof of Lemma \ref{LEMMA_MOST_IMP}.
\end{proof}

\paragraph{Remark A.1}
The function $h(\cdot,\tau)$ defined in the proof of Lemma \ref{LEMMA_MOST_IMP} satisfies the following:

\begin{equation}
 \lim_{|x| \rightarrow\infty}h(x,\tau)=0, \mbox{ for each fixed $\tau\in(0,1)$}.\nonumber
\end{equation}

This means (using Lemma \ref{LEMMA_MOST_IMP}) that for any fixed $\tau\in(0,1)$, we have,
\begin{equation}
 \lim_{|x| \rightarrow\infty}\mid T_{\tau}(x)-x \mid=0. \label{SIGNALS_UNSHRUNK}
\end{equation}

Equation (\ref{SIGNALS_UNSHRUNK}) above shows that for the general class of tail robust priors under consideration, large observations almost remain unshrunk no matter however small $\tau$ is.


\paragraph{Proof of Theorem \ref{THM_3.1}}
\begin{proof}
Suppose that $X \sim \mathcal{N}_{n}(\theta,I_{n})$, $\theta \in l_{0}[q_{n}]$ and $\tilde{q}_n=\#\{i:\theta_i\neq 0\}$. Let us split $E_{\theta}||T_{\tau}(X)-\theta||^2=\sum\limits_{i=1}^{n} E_{\theta_{i}} \big(T_{\tau}(X_{i})-\theta_{i}\big)^2 $ into two parts as follows:
\begin{equation}\label{THM_3.1_MAIN_DECOMPOSITION}
\sum_{i=1}^{n} E_{\theta_{i}} \big(T_{\tau}(X_{i})-\theta_{i}\big)^2
= \sum_{i:\theta_i\neq 0} E_{\theta_{i}} \big(T_{\tau}(X_{i})-\theta_{i}\big)^2 + \sum_{i:\theta_i=0} E_{\theta_{i}} \big(T_{\tau}(X_{i})-\theta_{i}\big)^2
\end{equation}
We shall show that, for all sufficiently small $\tau\in(0,1)$, the mean square errors due to the non-zero means and the zero means as in the right hand side of (\ref{THM_3.1_MAIN_DECOMPOSITION}), can be bounded above by $\tilde{q}_n\log\big(\frac{1}{\tau^{2a}}\big)$ and $(n-\tilde{q}_n)\tau^{2a}\sqrt{\log\big(\frac{1}{\tau^{2a}}\big)}$, respectively, up to some multiplicative constants.\newline

{\it Non-zero means:}\newline

For $\theta_i\neq 0$, let us split the corresponding mean square error as follows:
\begin{eqnarray}
 E_{\theta_{i}}\big(T_{\tau}(X_i) - \theta_{i} \big)^2
 &=& E_{\theta_{i}}\big( \big(T_{\tau}(X_i) - X_i \big) + \big(X_i - \theta_{i}\big) \big)^2\nonumber\\
 &=& E_{\theta_{i}}\big(T_{\tau}(X_i) - X_i\big)^2+ E_{\theta_{i}}\big(X_i - \theta_{i}\big)^2+2E_{\theta_{i}}\big(T_{\tau}(X_i) - X_i\big)\big(X_i - \theta_{i}\big) \nonumber\\
 &\leqslant& E_{\theta_{i}}\big(T_{\tau}(X_i) - X_i\big)^2+1+2 \sqrt{E_{\theta_{i}}\big(T_{\tau}(X_i) - X_i\big)^2} \nonumber\\
 &=& \bigg[\sqrt{E_{\theta_{i}}\big(T_{\tau}(X_i) - X_i\big)^2}+1\bigg]^2 \label{THM_3.1_UB1}
\end{eqnarray}
where we use the Cauchy-Schwartz inequality and the fact $E_{\theta_{i}}\big(X_i - \theta_{i}\big)^2=1$ in the step preceding the final step in the above chain of inequalities.\newline

Let us now define
$$\zeta_{\tau}:=\sqrt{2\log\big(\frac{1}{\tau^{2a}}\big)}.$$

%
Let us now fix any $c>1$ and choose any $\rho>c$. Then, using Lemma \ref{LEMMA_MOST_IMP}, there exists a non-negative real-valued function $h(\cdot,\tau)$, depending on $c$, such that
 \begin{equation}
  |T_{\tau}(x)-x| \leqslant h(x,\tau), \mbox{ for all } x \in \mathbb{R}\label{THM_3.1_LEM_4.3_UB1}
\end{equation}
and
 \begin{equation}
  \lim_{\tau \downarrow 0}\sup_{|x| > \rho \zeta_{\tau}} h(x,\tau) = 0.\label{THM_3.1_LEM_4.3_UB2}
\end{equation}

Once again, using the fact $\big(T_{\tau}(X_i) - X_i\big)^2 \leqslant X_i^2$, together with (\ref{THM_3.1_LEM_4.3_UB1}), we obtain,
\begin{eqnarray}
 E_{\theta_{i}}\big(T_{\tau}(X_i) - X_i\big)^2
 &=& E_{\theta_{i}}\big[(T_{\tau}(X_i) - X_i\big)^2 1\big\{|X_i|\leqslant \rho\zeta_{\tau}\big\}\big]\nonumber\\
 && + \mbox{ } E_{\theta_{i}}\big[(T_{\tau}(X_i) - X_i\big)^2 1\big\{|X_i|> \rho\zeta_{\tau}\big\}\big]\nonumber\\
 &\leqslant& \rho^2\zeta_{\tau}^2+\bigg(\sup_{|x|> \rho\zeta_{\tau}}h(x,\tau)\bigg)^2 \label{THM_3.1_UB2}
\end{eqnarray}
Now using (\ref{THM_3.1_LEM_4.3_UB2}) and the fact that $\zeta_{\tau}\rightarrow\infty$ as $\tau\rightarrow 0$, it follows that,
\begin{equation}
 \bigg(\sup_{|x|> \rho\zeta_{\tau}}h(x,\tau)\bigg)^2=o(\zeta_{\tau}^2) \mbox{ as } \tau\rightarrow 0.\label{THM_3.1_h_eq_o_zeta}
\end{equation}
On combining (\ref{THM_3.1_UB2}) and (\ref{THM_3.1_h_eq_o_zeta}), it follows that,
\begin{equation}
 E_{\theta_{i}}\big(T_{\tau}(X_i) - X_i\big)^2 \leqslant \rho^2\zeta_{\tau}^2(1+o(1)), \quad\mbox{as }\tau \rightarrow 0. \label{THM_3.1_UB_EXPCT_DIFF_SQ}
\end{equation}
Note that (\ref{THM_3.1_UB_EXPCT_DIFF_SQ}) holds uniformly for any $i$ such that $\theta_i \neq 0$, whence we have,
\begin{equation}
 \sum_{i:\theta_i\neq 0} E_{\theta_{i}} \big(T_{\tau}(X_{i})-\theta_{i}\big)^2 \lesssim \tilde{q}_n\zeta_{\tau}^2, \quad \mbox{as }\tau \rightarrow 0.\label{THM_3.1_UB_NONZERO_THETA} 
\end{equation}
 
{\it Zero means:}\newline

For $\theta_i=0$, the corresponding mean square error is split as follows:
 \begin{equation}
 E_{0}\big[T_{\tau}(X_i)^{2}\big]=E_{0}\big[T_{\tau}(X_i)^{2}1\{|X_i|\leqslant \zeta_{\tau}\}\big]+E_{0}\big[T_{\tau}(X_i)^{2}1\{|X_i| >\zeta_{\tau}\}\big],\label{THM_3.1_EQ_PETZ_1}
 \end{equation}
 where $\zeta_{\tau}=\sqrt{2\log\big(\frac{1}{\tau^{2a}}\big)}.$\newline
 
 For the first term on the right hand side of (\ref{THM_3.1_EQ_PETZ_1}), denoting $g_1(a)=\frac{KM}{a(1-a)}$ and using Lemma \ref{LEM_MOMENT_INEQ}, we obtain,
 \begin{eqnarray}
  E_{0} T_{\tau}(X_i)^{2} 1\{|X_i|\leqslant \zeta_{\tau}\}
  &\leqslant& \frac{g_1(a)^2}{\sqrt{2\pi}} (\tau^{2a})^2 \int_{-\zeta_{\tau}}^{\zeta_{\tau}} x^2 e^{\frac{x^2}{2}}dx (1+o(1)) \mbox{ as } \tau \rightarrow 0 \nonumber\\
  &=& \frac{2g_1(a)^2}{\sqrt{2\pi}} (\tau^{2a})^2 \int_{0}^{\zeta_{\tau}} x^2 e^{\frac{x^2}{2}}dx (1+o(1)) \mbox{ as } \tau \rightarrow 0 \nonumber\\
  &\leqslant& \sqrt{\frac{2}{\pi}}g_1(a)^2(\tau^{2a})^2 \zeta_{\tau}\frac{1}{\tau^{2a}}(1+o(1)) \mbox{ as } \tau \rightarrow 0 \nonumber\\
  &\lesssim& \zeta_{\tau}\tau^{2a} \mbox{ as } \tau \rightarrow 0,\label{THM_3.1_EQ_PETZ_2}
 \end{eqnarray} 
 while for the second term using the fact that $|T_{\tau}(x)|\leqslant |x|$ for $x\in\mathbb{R}$ we have,
 \begin{eqnarray}
 E_{0} T_{\tau}(X_i)^{2} 1\{|X_i| > \zeta_{\tau}\}
 &\leqslant& 2 \int_{\zeta_{\tau}}^{\infty} x^2\phi(x)dx\nonumber\\
 &=& 2 \big[\zeta_{\tau}\phi(\zeta_{\tau})+(1-\Phi(\zeta_{\tau})) \big]\nonumber\\
 &\leqslant& 2\zeta_{\tau}\phi(\zeta_{\tau})+2\frac{\phi(\zeta_{\tau})}{\zeta_{\tau}}\nonumber\\
 &=& \sqrt{\frac{2}{\pi}}\zeta_{\tau}\tau^{2a}(1+o(1))\mbox{ as } \tau \rightarrow 0 \nonumber\\
 &\lesssim& \zeta_{\tau}\tau^{2a} \mbox{ as } \tau \rightarrow 0. \label{THM_3.1_EQ_PETZ_3}
 \end{eqnarray}
Combining equations (\ref{THM_3.1_EQ_PETZ_1}), (\ref{THM_3.1_EQ_PETZ_2}) and (\ref{THM_3.1_EQ_PETZ_3}), it follows that,
\begin{eqnarray}
\sum_{i:\theta_i=0}E_{\theta_{i}}\big(T_{\tau}(X_i)-\theta_{i} \big)^2
&\lesssim& (n-\tilde{q}_n)\tau^{2a}\sqrt{\log\big(\frac{1}{\tau^{2a}}\big)} \mbox{ as }\tau \rightarrow 0. \label{THM_3.1_UB_ZERO_THETA}
\end{eqnarray}
Finally, on combining (\ref{THM_3.1_MAIN_DECOMPOSITION}), (\ref{THM_3.1_UB_NONZERO_THETA}) and (\ref{THM_3.1_UB_ZERO_THETA}), we have,
\begin{equation}
 \sum_{i=1}^{n} E_{\theta_{i}} \big(T_{\tau}(X_{i})-\theta_{i}\big)^2
 \lesssim \tilde{q}_n\log\big(\frac{1}{\tau^{2a}}\big)+ (n-\tilde{q}_n)\tau^{2a}\sqrt{\log\big(\frac{1}{\tau^{2a}}\big)}\mbox{ as } \tau \rightarrow 0.
\end{equation}
The stated result now becomes immediate by observing that $\tilde{q}_n\leqslant q_n$ and $q_n=o(n)$ and then taking supremum over all $\theta\in l_{0}[q_{n}]$. This completes the proof of Theorem \ref{THM_3.1}.
\end{proof}

\begin{lem}\label{POST_VAR_IDENTITY}
The posterior variance arising out of the general class of shrinkage priors (\ref{LAMBDA_PRIOR}) can be represented by the following identity:
 \begin{eqnarray}
  Var(\theta|x)
  &=&\frac{T_{\tau}(x)}{x} - \big(T_{\tau}(x) -x \big)^2 + x^2 \frac{\int_{0}^{\infty}\frac{1}{(1+t\tau^2)^{5/2}}t^{-a-1}L(t)e^{-\frac{x^2}{2(1+t\tau^2)}}dt}{\int_{0}^{\infty}\frac{1}{(1+t\tau^2)^{1/2}}t^{-a-1}L(t)e^{-\frac{x^2}{2(1+t\tau^2)}}dt}\label{VAR_IDNT_1}\\
   &=& \frac{T_{\tau}(x)}{x} - T_{\tau}^2(x) + x^2 \frac{\int_{0}^{\infty}\frac{(t\tau^2)^2}{(1+t\tau^2)^{5/2}}t^{-a-1}L(t)e^{-\frac{x^2}{2(1+t\tau^2)}}dt}{\int_{0}^{\infty}\frac{1}{(1+t\tau^2)^{1/2}}t^{-a-1}L(t)e^{-\frac{x^2}{2(1+t\tau^2)}}dt}\label{VAR_IDNT_2} 
 \end{eqnarray}
 which can be bounded from above by
 \begin{equation}
  Var(\theta|x) \leqslant 1 + x^2.\nonumber
 \end{equation}
\end{lem}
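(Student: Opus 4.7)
}

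The plan is to invoke the hierarchical structure of the prior and apply the law of total variance conditional on the local scale $\lambda^2$. Under the prior, conditional on $(\lambda^2,\tau^2)$, the standard normal-normal conjugacy gives
\begin{equation*}
\theta \mid x,\lambda^2,\tau \sim N\bigl((1-\kappa)x,\,1-\kappa\bigr), \qquad \kappa = \tfrac{1}{1+\lambda^2\tau^2},
\end{equation*}
so that $E(\theta\mid x,\lambda^2)=(1-\kappa)x$ and $\mathrm{Var}(\theta\mid x,\lambda^2)=1-\kappa$. Applying the conditional variance decomposition then yields
\begin{equation*}
\mathrm{Var}(\theta\mid x) \;=\; E\bigl(1-\kappa\mid x,\tau\bigr) \;+\; x^2\,\mathrm{Var}\bigl(\kappa\mid x,\tau\bigr).
\end{equation*}
Using $T_\tau(x)=(1-E(\kappa\mid x,\tau))x$ from (\ref{POST_MEAN_OG2}), the first summand equals $T_\tau(x)/x$, while the second equals $x^2\bigl[E(\kappa^2\mid x,\tau)-E(\kappa\mid x,\tau)^2\bigr]=x^2 E(\kappa^2\mid x,\tau)-(T_\tau(x)-x)^2$. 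This immediately produces identity (\ref{VAR_IDNT_1}) once $E(\kappa^2\mid x,\tau)$ is written as an explicit integral ratio.

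To obtain that ratio, I would integrate the joint density $f(x\mid\theta)\pi(\theta\mid\lambda^2,\tau)\pi(\lambda^2)$ over $\theta$ to get the marginal posterior of $t=\lambda^2$:
\begin{equation*}
\pi(t\mid x,\tau) \;\propto\; (1+t\tau^2)^{-1/2}\,e^{-x^2/(2(1+t\tau^2))}\,t^{-a-1}L(t).
\end{equation*}
Then $E(\kappa^j\mid x,\tau)=\int_0^\infty (1+t\tau^2)^{-j}\pi(t\mid x,\tau)dt$ gives, after canceling the normalizing constant, precisely the integral appearing in (\ref{VAR_IDNT_1}) for $j=2$. The equivalence of the two forms (\ref{VAR_IDNT_1}) and (\ref{VAR_IDNT_2}) is then a short algebraic identity: expanding $E((1-\kappa)^2\mid x,\tau)=1-2E(\kappa\mid x,\tau)+E(\kappa^2\mid x,\tau)$ and using $T_\tau(x)=(1-E(\kappa\mid x,\tau))x$ shows
\begin{equation*}
x^2 E((1-\kappa)^2\mid x,\tau) - T_\tau^2(x) \;=\; x^2 E(\kappa^2\mid x,\tau)-(T_\tau(x)-x)^2,
\end{equation*}
so (\ref{VAR_IDNT_2}) follows by rewriting the integrand $(t\tau^2)^2/(1+t\tau^2)^{5/2}=(1-\kappa)^2/(1+t\tau^2)^{1/2}$ inside the posterior integral.

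Finally, the upper bound is the easiest step: since $\kappa\in[0,1]$ almost surely under the posterior, $\mathrm{Var}(\kappa\mid x,\tau)\leqslant 1$ and $E(1-\kappa\mid x,\tau)\leqslant 1$, so the total-variance decomposition immediately yields $\mathrm{Var}(\theta\mid x)\leqslant 1+x^2$. No step here poses a genuine obstacle; the only mild care needed is to perform the integral computations so that the two integral-ratio representations match the stated forms exactly, in particular keeping track of the factors of $(1+t\tau^2)^{1/2}$ that come from the marginal likelihood of $\lambda^2$.
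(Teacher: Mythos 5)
Your proposal is correct and takes essentially the same route as the paper: the paper likewise applies the law of iterated variance conditional on the shrinkage coefficient $\kappa$ (equivalently $\lambda^2$), writes $Var(\theta|x)=E(1-\kappa|x,\tau)+x^2 Var(\kappa|x,\tau)$, and converts the posterior moments of $\kappa$ into the stated integral ratios via the marginal posterior of $t=\lambda^2$. Your algebraic passage between the two representations and the trivial bound $Var(\theta|x)\leqslant 1+x^2$ also match the paper's argument.
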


\begin{proof} By the law of iterated variance it follows that
\begin{eqnarray}
 Var(\theta|x)
 &=& E\big[Var(\theta|x,\kappa,\tau)\big]+ Var\big[E(\theta|x,\kappa,\tau)\big]\nonumber\\
 &=& E\big[(1-\kappa)|x,\tau\big]+Var\big[x(1-\kappa)|x,\tau)\big]\nonumber\\
 &=& E\big[(1-\kappa)|x,\tau\big]+ x^2 Var\big[\kappa|x,\tau)\big]\nonumber\\
 &=& E\big[(1-\kappa)|x,\tau\big]+ x^2 E\big[\kappa^2|x,\tau\big] - x^2E^2\big[\kappa|x,\tau\big]\nonumber\\
 &=& \frac{T_{\tau}(x)}{x} - \big(T_{\tau}(x) -x \big)^2 + x^2 \frac{\int_{0}^{\infty}\frac{1}{(1+t\tau^2)^{5/2}}t^{-a-1}L(t)e^{-\frac{x^2}{2(1+t\tau^2)}}dt}{\int_{0}^{\infty}\frac{1}{(1+t\tau^2)^{1/2}}t^{-a-1}L(t)e^{-\frac{x^2}{2(1+t\tau^2)}}dt}\nonumber
\end{eqnarray}
 which can equivalently be represented as by the following identity as well:
 \begin{eqnarray}
 Var(\theta|x)
 &=& E\big[(1-\kappa)|x,\tau\big]+ x^2 E\big[(1-\kappa)^2|x,\tau\big] - x^2E^2\big[1-\kappa|x,\tau\big]\nonumber\\
 &=& \frac{T_{\tau}(x)}{x} - T_{\tau}^2(x) + x^2 \frac{\int_{0}^{\infty}\frac{(t\tau^2)^2}{(1+t\tau^2)^{5/2}}t^{-a-1}L(t)e^{-\frac{x^2}{2(1+t\tau^2)}}dt}{\int_{0}^{\infty}\frac{1}{(1+t\tau^2)^{1/2}}t^{-a-1}L(t)e^{-\frac{x^2}{2(1+t\tau^2)}}dt}.\nonumber
 \end{eqnarray}
 That $Var(\theta|x) \leqslant 1 + x^2$ now follows trivially from the above identities.
\end{proof}

\begin{lem}\label{LEMMA_IMPORTANT_INTEGRAL}
 Let us define
 \begin{equation}
  J(x,\tau)=x^2 \frac{\int_{0}^{\infty}\frac{(t\tau^2)^2}{(1+t\tau^2)^{5/2}}t^{-a-1}L(t)e^{-\frac{x^2}{2(1+t\tau^2)}}dt}{\int_{0}^{\infty}\frac{1}{(1+t\tau^2)^{1/2}}t^{-a-1}L(t)e^{-\frac{x^2}{2(1+t\tau^2)}}dt}, x\in\mathbb{R} \mbox{ and } \tau\in(0,1),\nonumber
 \end{equation}
 where the function $L(\cdot)$ is already defined in (\ref{LAMBDA_PRIOR}) and satisfies Assumption \ref{ASSUMPTION_LAMBDA_PRIOR}, with $a\in[0.5,1)$. Then, for each $x\in\mathbb{R}$ and every $0<\tau<1$, the function $J(\cdot,\cdot)$ is bounded above by,
 \begin{equation}
  J(x,\tau) \leqslant 2KMe^{\frac{x^2}{2}}\tau^{2a}\big(1+o(1)),\label{CRUCIAL_UB_VAR}
 \end{equation}
 where the $o(1)$ term is independent of $x$, and depends only on $\tau$ such that the term $(1+o(1))$ in (\ref{CRUCIAL_UB_VAR}) is positive for any $0<\tau<1$, and $\lim_{\tau\rightarrow 0}o(1)=0$.
\end{lem}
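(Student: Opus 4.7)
The plan is to proceed in close analogy with the proof of Lemma \ref{LEM_MOMENT_INEQ}, since $J(x,\tau)$ is structurally a ``squared'' version of the shrinkage expectation already analysed there. I would begin by writing $J(x,\tau)=x^{2}N(x,\tau)/D(x,\tau)$ with $N$, $D$ the displayed integrals, and then make the Beta-type change of variable $\kappa=1/(1+t\tau^{2})$ (so $1-\kappa=t\tau^{2}/(1+t\tau^{2})$ and $t=(1-\kappa)/(\tau^{2}\kappa)$). A direct computation shows that a common $(\tau^{2})^{a}$ factor pulls out of both $N$ and $D$, reducing them to integrals on $(0,1)$ whose weights are $\kappa^{a-1/2}L\!\bigl(\tfrac{1-\kappa}{\tau^{2}\kappa}\bigr)e^{-\kappa x^{2}/2}$, multiplied respectively by $(1-\kappa)^{1-a}$ in the numerator and $(1-\kappa)^{-a-1}$ in the denominator. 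The $(\tau^{2})^{a}$ factors cancel in the ratio, so the $\tau^{2a}$ of the target must come from an \emph{asymmetry} between numerator and denominator asymptotics as $\tau\downarrow 0$.

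Next I would upper-bound the numerator. Since $a\in[0.5,1)$ we have $(1-\kappa)^{1-a}\leqslant 1$, and Assumption~\ref{ASSUMPTION_LAMBDA_PRIOR}(2) gives $L\leqslant M$; the factor $x^{2}$ is absorbed into the exponential via the elementary inequalities $x^{2}\leqslant 2e^{x^{2}/2}$ (globally on $\mathbb{R}$) together with $ye^{-y/2}\leqslant 2/e$ for $y\geqslant 0$, applied to $y=x^{2}/(1+t\tau^{2})$. This produces $x^{2}N\leqslant 2M\cdot e^{x^{2}/2}\cdot (\tau^{2})^{a}\cdot\mathcal{I}_{N}(\tau)$, where $\mathcal{I}_{N}(\tau)$ is a bounded integral whose limiting value is controlled by $\lim_{t\to\infty}L(t)\in(0,\infty)$ via Karamata-type asymptotics (the case $a=1/2$ is borderline and is handled by retaining the exponential factor $e^{-\kappa x^{2}/2}$ in the numerator rather than killing it with $y e^{-y/2}\leqslant 2/e$, so that the exponential decay cures the otherwise-divergent tail at infinity).

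For the lower bound on $D$, I would restrict integration to the region $\{\kappa:(1-\kappa)/(\tau^{2}\kappa)\geqslant t_{0}\}$ where Assumption~\ref{ASSUMPTION_LAMBDA_PRIOR}(1) gives $L\geqslant c_{0}$, then use $e^{-\kappa x^{2}/2}\geqslant e^{-x^{2}/2}$. The dominant contribution comes from the $(1-\kappa)^{-a-1}$ singularity near $\kappa=1$ (i.e.\ near $t=0$ in the original variable): evaluating it asymptotically as $\tau\downarrow 0$ produces a factor of exact order $(\tau^{2})^{-a}$, multiplied by the reciprocal of the prior-normalization constant $K$. Dividing the numerator bound by this lower bound on $D$ yields the product $(\tau^{2})^{a}\cdot(\tau^{2})^{-a}=1$ times a factor of $e^{x^{2}/2}\tau^{2a}$ (once the leftover $(\tau^{2})^{a}$ from the original cancellation analysis is reattached), and the book-keeping of constants delivers the coefficient $2KM$: the $2$ from $x^{2}\leqslant 2e^{x^{2}/2}$, the $M$ from $L\leqslant M$ in the numerator, and the $K$ from the prior normalization that enters the $D$-lower bound.

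The main obstacle, and where care is most needed, is showing that the error $o(1)$ in $(1+o(1))$ is \emph{uniform in $x$}. Both $\mathcal{I}_{N}(\tau)$ and the Karamata estimate of $D$ involve $L\!\bigl(\tfrac{1-\kappa}{\tau^{2}\kappa}\bigr)$ whose pointwise-in-$\kappa$ convergence to $\lim_{t\to\infty}L(t)$ as $\tau\downarrow 0$ is not uniform; one must therefore combine the dominated-convergence argument for $\mathcal{I}_{N}$ with the careful extraction of the leading $(\tau^{2})^{-a}$ singularity of $D$ in such a way that the $x$-dependent exponential $e^{\pm x^{2}/2}$ factors out cleanly and the residual $\tau\to 0$ error does not carry any $x$-dependence. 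The borderline $a=1/2$ also needs the refined numerator estimate mentioned above, which is the other delicate point.
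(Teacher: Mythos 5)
Your overall architecture --- bound $L\leqslant M$ in the numerator, pull $e^{-x^{2}/2}$ out of the denominator and identify the remaining $x$-free integral's $\tau\to 0$ limit with $K^{-1}$, and extract $\tau^{2a}$ from a change of variables --- matches the paper's proof in outline. The genuine gap is in how you absorb the prefactor $x^{2}$: both devices you propose fail to deliver the stated bound. If you use $x^{2}\leqslant 2e^{x^{2}/2}$, then combined with the factor $e^{x^{2}/2}$ you have already paid to lower-bound the denominator's exponential, you end up with $e^{x^{2}}$ rather than $e^{x^{2}/2}$ in the final estimate. This is not cosmetic: in the proof of Theorem \ref{THM_3.2} the lemma is integrated against $\phi(x)$ over $|x|\leqslant\zeta_{\tau}$, and $\int_{-\zeta_{\tau}}^{\zeta_{\tau}}e^{x^{2}}\phi(x)\,dx\asymp \tau^{-2a}/\zeta_{\tau}$, which after multiplying by $\tau^{2a}$ gives $1/\zeta_{\tau}\gg\tau^{2a}\zeta_{\tau}$ and destroys the rate needed there. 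If instead you use $ye^{-y/2}\leqslant 2/e$ with $y=x^{2}/(1+t\tau^{2})$, you discard the exponential decay in the numerator and are left, after the substitution, with $\int_{0}^{1}\kappa^{a-3/2}(1-\kappa)^{1-a}\,d\kappa=B(a-\tfrac{1}{2},\,2-a)$, which diverges as $a\downarrow\tfrac{1}{2}$ and is infinite at the boundary case $a=\tfrac{1}{2}$ that the lemma must cover; your parenthetical ``retain the exponential'' fix is exactly the hard point and is not carried out.

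The paper's resolution is a specific cancellation your plan does not contain: substitute $u=x^{2}/(1+t\tau^{2})$ in the numerator only, which (after dividing by the denominator's limit $K^{-1}e^{-x^{2}/2}$) turns $J(x,\tau)$ into at most $KM(x^{2})^{1/2-a}e^{x^{2}/2}\tau^{2a}\int_{0}^{x^{2}}(1-u/x^{2})^{1-a}u^{a-1/2}e^{-u/2}\,du\,(1+o(1))$; then, precisely because $a\geqslant\tfrac{1}{2}$ and $0<u<x^{2}$, one has $u^{a-1/2}\leqslant(x^{2})^{a-1/2}$, which exactly cancels the $(x^{2})^{1/2-a}$ prefactor, and $\int_{0}^{x^{2}}e^{-u/2}\,du\leqslant 2$ supplies the constant $2$. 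Thus the single factor $e^{x^{2}/2}$ comes only from the denominator, the $x^{2}$ is absorbed algebraically rather than by an exponential inequality, and the $o(1)$ is automatically uniform in $x$ since it arises solely from the $x$-free dominated-convergence step $\int_{0}^{\infty}(1+t\tau^{2})^{-1/2}t^{-a-1}L(t)\,dt\to K^{-1}$ --- so the uniformity issue you flag as the main obstacle never arises. (Your description of the denominator is also internally inconsistent: restricting to the region where $L\geqslant c_{0}$ yields a constant of the form $c_{0}t_{0}^{-a}/a$, not $K^{-1}$; the paper simply keeps $L$ and applies dominated convergence.)
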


\begin{proof}
First observe that, for each fixed $\tau\in (0,1)$, the function $J(x,\tau)$ is symmetric in $x$, and so is the stated upper bound in (\ref{CRUCIAL_UB_VAR}). Moreover, $J(0,\tau)=0$, for any $0<\tau<1$. Thus, the stated result is vacuously true when $x=0$ and $0<\tau<1$. Therefore, it will be enough to prove that (\ref{CRUCIAL_UB_VAR}) holds when $x>0$. So, let us assume that $x>0$.\newline

Note that
 \begin{eqnarray}
J(x,\tau)
&\leqslant& Mx^2e^{\frac{x^2}{2}} \frac{\int_{0}^{\infty}\frac{(t\tau^2)^2}{(1+t\tau^2)^{5/2}}t^{-a-1}e^{-\frac{x^2}{2(1+t\tau^2)}}dt}{\int_{0}^{\infty}\frac{1}{(1+t\tau^2)^{1/2}}t^{-a-1}L(t)dt}\nonumber\\
&=& KMx^2e^{\frac{x^2}{2}} \int_{0}^{\infty}\frac{(t\tau^2)^2}{(1+t\tau^2)^{5/2}}t^{-a-1}e^{-\frac{x^2}{2(1+t\tau^2)}}dt \big(1+o(1)\big)\label{VAR_NONZERO_2}
\end{eqnarray}
where in the preceding chain of inequalities we use the facts that the function $L(\cdot)$ is bounded above by the constant $M > 0$ and $\int_{0}^{\infty}\frac{1}{(1+t\tau^2)^{1/2}}t^{-a-1}L(t)dt=\int_{0}^{\infty}t^{-a-1}L(t)dt\big(1+o(1)\big)= K^{-1}\big(1+o(1)\big)$ as $\tau \rightarrow 0$ (which follows from Lebesgue's Dominated Convergence Theorem). Clearly, the $o(1)$ term does not involve $x$ and depends only on $\tau$ such that $\lim_{\tau\rightarrow 0}o(1)=0$. It is also evident that the term $(1+o(1))$ in (\ref{CRUCIAL_UB_VAR}) is always positive for any $0<\tau<1$.\newline

Consider now the following variable transformation in the integral in (\ref{VAR_NONZERO_2}):
\begin{equation}
 u=\frac{x^2}{1+t\tau^2}.\nonumber
\end{equation}

Then we have,
\begin{eqnarray}
J(x,\tau)
&\leqslant&  KMx^2e^{\frac{x^2}{2}} \int_{0}^{x^2} \bigg(1-\frac{u}{x^2}\bigg)^2\bigg(\frac{u}{x^2}\bigg)^{1/2}\bigg(\frac{1}{\tau^2}\frac{x^2}{u}\big(1-\frac{u}{x^2}\big)\bigg)^{-a-1}e^{-u/2}\frac{x^2}{\tau^2u^2}du\big(1+o(1)\big)\nonumber\\
&=&  KM(x^2)^{1/2-a}e^{\frac{x^2}{2}}\tau^{2a}\int_{0}^{x^2} \bigg(1-\frac{u}{x^2}\bigg)^{1-a}u^{a-1/2}e^{-u/2}du\big(1+o(1)\big)\label{VAR_NONZERO_3}
\end{eqnarray}

Note that $0 < u < x^2$ implies $0 < 1-\frac{u}{x^2} < 1$. Hence, $\big(1-\frac{u}{x^2}\big)^{1-a}<1$ as $0<a<1$. Also, as $\frac{1}{2}\leqslant a <1$, we have, $u^{a-1/2}\leqslant (x^2)^{a-1/2}$.\newline

Therefore using (\ref{VAR_NONZERO_3}), we obtain,
\begin{eqnarray}\label{VAR_NONZERO_4}
J(x,\tau)
&\leqslant&  KMe^{\frac{x^2}{2}}\tau^{2a}\int_{0}^{x^2} e^{-u/2}du\big(1+o(1)\big),\nonumber\\
&=& 2KMe^{\frac{x^2}{2}}\tau^{2a} \big(1-e^{-x^2/2}\big)\big(1+o(1)\big)\nonumber\\
&\leqslant& 2KMe^{\frac{x^2}{2}}\tau^{2a}\big(1+o(1)\big),\nonumber
\end{eqnarray}
thereby completing the proof of Lemma \ref{LEMMA_IMPORTANT_INTEGRAL}.
\end{proof}

\paragraph{Proof of Theorem \ref{THM_3.2}}
\begin{proof}
 Suppose that $X \sim \mathcal{N}_{n}(\theta,I_{n})$, where $\theta \in l_{0}[q_{n}]$ and $\tilde{q}_n=\#\{i:\theta_{i}\neq 0\}$. Then $\tilde{q}_n \leqslant q_n$. Let $\zeta_{\tau}=\sqrt{2\log\big(\frac{1}{\tau^{2a}}\big)}$.\newline
 
 {\it Nonzero means:}\newline

 By applying the same reasoning as in the proof of Lemma \ref{LEMMA_MOST_IMP} to the final term of $Var(\theta|x)$ in (\ref{VAR_IDNT_1}), there exists a non-negative real-valued measurable function $\tilde{h}(x,\tau)$ such that $Var(\theta|x) \leqslant \tilde{h}(x,\tau)$ for all $x\in\mathbb{R}$ and all $\tau \in (0,1)$, where $\tilde{h}(x,\tau) \rightarrow 1 $ as $x \rightarrow\infty$ for any fixed $\tau \in (0,1)$. If $\tau \rightarrow 0$, the function $\tilde{h}(x,\tau)$ satisfies the following for any $c > 1$:
 \begin{eqnarray}
  \lim_{\tau \downarrow 0}\sup_{|x| > \rho \zeta_{\tau}} \tilde{h}(x,\tau) = 1 \mbox{ for all } \rho > c. \nonumber
 \end{eqnarray}

So, let us fix any arbitrary $c>1$ and choose any $\rho>c$. Then using the above arguments it follows that,
\begin{equation}\label{NONZERO_THETA_VAR_CONTRIBUTION_COMPONENT_1}
 E_{\theta_{i}}\big[Var(\theta_{i}|X_i)1\{|X_i|> \rho\zeta_{\tau}\}\big]
\lesssim 1 \mbox{ as } \tau \rightarrow 0.
\end{equation}

Let us now consider the case $|x| \leqslant \rho\zeta_{\tau}$. Then using the fact that $Var(\theta|x)\leqslant 1+x^2$ for any $x\in\mathbb{R}$ as obtained from Lemma \ref{POST_VAR_IDENTITY}, we obtain,
\begin{equation}\label{NONZERO_THETA_VAR_CONTRIBUTION_COMPONENT_2}
 E_{\theta_{i}}\big[Var(\theta_{i}|X_i)1\{|X_i|\leqslant \rho\zeta_{\tau}\}\big]
\lesssim \zeta_{\tau}^2 \mbox{ as } \tau \rightarrow 0.
\end{equation}

Combining (\ref{NONZERO_THETA_VAR_CONTRIBUTION_COMPONENT_1}) and (\ref{NONZERO_THETA_VAR_CONTRIBUTION_COMPONENT_2}) together, it follows that, for any $i$ such that $\theta_i\neq 0$,
\begin{eqnarray}
\begin{aligned}
&\quad E_{\theta_{i}} Var(\theta_{i}|X_i)\nonumber\\
&=E_{\theta_{i}}\big[Var(\theta_{i}|X_i)1\{|X_i|> \rho\zeta_{\tau}\}\big]+E_{\theta_{i}}\big[Var(\theta_{i}|X_i)1\{|X_i|\leqslant \rho\zeta_{\tau}\}\big]\nonumber\\
&\lesssim 1+\zeta_{\tau}^2 \mbox{ as } \tau \rightarrow 0,
\end{aligned}
\end{eqnarray}
which holds uniformly in $i$ such that $\theta_i\neq 0$. Thus,
\begin{equation}
 \sum_{i:\theta_{i}\neq 0}E_{\theta_{i}} Var(\theta_{i}|X_i) \lesssim \tilde{q}_n(1+\zeta_{\tau}^2) \mbox{ as } \tau \rightarrow 0.\label{VAR_NONZERO_CONTRIBUTION}
\end{equation}

{\it Zero means:}\newline

 When $|x|>\zeta_{\tau}$, using the bound $Var(\theta|x) \leqslant 1+x^2$ in Lemma \ref{POST_VAR_IDENTITY}, we obtain,
 \begin{eqnarray}
  E_{0} Var(\theta|X_i)1\{|X_i| > \zeta_{\tau}\}
  &\leqslant& 2 \int_{\zeta_{\tau}}^{\infty} (1+x^2)\frac{1}{\sqrt{2\pi}}e^{-\frac{x^2}{2}}dx\nonumber\\
  &\lesssim& \frac{\tau^{2a}}{\zeta_{\tau}} + \zeta_{\tau}\tau^{2a} \mbox{ as } \tau \rightarrow 0.\label{VAR_THETA_0_1}
 \end{eqnarray}

 Again when $|x| \leqslant \zeta_{\tau}$, we consider the upper bound $Var(\theta|x) \leqslant \frac{T_{\tau}(x)}{x} + J(x,\tau)$ as obtained from Lemma \ref{POST_VAR_IDENTITY}, where the term $J(x,\tau)$ is already defined in Lemma \ref{LEMMA_IMPORTANT_INTEGRAL}. Also note that $\frac{T_{\tau}(x)}{x}=E(1-\kappa|x,\tau)$. Therefore, using the moment inequality given in Lemma \ref{LEM_MOMENT_INEQ}, it follows
\begin{equation}
\int_{-\zeta_{\tau}}^{\zeta_{\tau}} \frac{T_{\tau}(x)}{x}\frac{1}{\sqrt{2\pi}}e^{-\frac{x^2}{2}}dx 
\lesssim \tau^{2a}\zeta_{\tau} \mbox{ as } \tau \rightarrow 0.\label{VAR_THETA_0_2}
\end{equation}

Similarly, using Lemma \ref{LEMMA_IMPORTANT_INTEGRAL} we obtain,
\begin{equation}
\int_{-\zeta_{\tau}}^{\zeta_{\tau}} J(x,\tau)\frac{1}{\sqrt{2\pi}}e^{-\frac{x^2}{2}}dx 
\lesssim \tau^{2a}\zeta_{\tau} \mbox{ as } \tau \rightarrow 0.\label{VAR_THETA_0_3}
\end{equation}

On combining (\ref{VAR_THETA_0_2}) and (\ref{VAR_THETA_0_3}), it follows that
\begin{equation}\label{VAR_THETA_0_4}
E_{0}\big[Var(\theta_{i}|X_i)1\{|X_i|\leqslant \zeta_{\tau}\}\big]
\lesssim \tau^{2a}\zeta_{\tau} \mbox{ as } \tau \rightarrow 0.
\end{equation}

Therefore, using (\ref{VAR_THETA_0_1}) and (\ref{VAR_THETA_0_4}), it follows that for any $i$ such that $\theta_i=0$,
\begin{eqnarray}
\begin{aligned}
&\quad E_{0} Var(\theta_{i}|X_i)\nonumber\\
&=E_{0}\big[Var(\theta_{i}|X_i)1\{|X_i|> \zeta_{\tau}\}\big]+E_{0}\big[Var(\theta_{i}|X_i)1\{|X_i|\leqslant \zeta_{\tau}\}\big]\nonumber\\
&\lesssim \tau^{2a}\zeta_{\tau} \mbox{ as } \tau \rightarrow 0,
\end{aligned}
\end{eqnarray}
which holds uniformly in $i$ such that $\theta_i=0$. Consequently,
\begin{equation}
 \sum_{i:\theta_{i}=0} E_{0} Var(\theta_i|X_i) \lesssim (n-\tilde{q}_n)\tau^{2a}\zeta_{\tau} \mbox{ as } \tau \rightarrow 0.\label{VAR_ZERO_CONTRIBUTION} 
\end{equation}
Combining (\ref{VAR_NONZERO_CONTRIBUTION}) and (\ref{VAR_ZERO_CONTRIBUTION}), it follows that
\begin{equation}
E_{\theta} \sum_{i=1}^{n} Var(\theta_{i}|X_i) \lesssim
  \tilde{q}_n\log\big(\frac{1}{\tau^{2a}}\big)+(n-\tilde{q}_n)\tau^{2a}\sqrt{\log\big(\frac{1}{\tau^{2a}}\big)} \mbox{ as } \tau \rightarrow 0.\nonumber
 \end{equation}
The result then follows immediately by noting that $\tilde{q}_n\leqslant q_n$ and $q_n=o(n)$ and subsequently taking supremum over all $\theta\in l_{0}[q_{n}]$. This completes the proof of Theorem \ref{THM_3.2}.
\end{proof}
\begin{lem}\label{LEMMA_POST_VAR_LOWER_BOUND}
 Suppose the function $L(\cdot)$ given by (\ref{LAMBDA_PRIOR}) satisfies Assumption \ref{ASSUMPTION_LAMBDA_PRIOR} and is non-decreasing over $(0,\infty)$, with $a=\frac{1}{2}$. Let us define for fixed $y > 0$ and for fixed $k > 0$,
 \begin{equation}
 I_{k}= \int_{0}^{\infty} \frac{(t\tau^2)^{k-\frac{1}{2}}}{(1+t\tau^2)^{k}}t^{-3/2}L(t)e^{\frac{t\tau^2}{1+t\tau^2}y}dt. \nonumber
 \end{equation}
 Then,
 \begin{align*}
  I_{\frac{5}{2}} &\geqslant L(1)\tau\bigg[\frac{\tau}{y}\big(e^{y/2} - e^{\tau^2y}\big)+\frac{1}{\sqrt{2}y}\big(e^{y}-e^{y/2}\big) \bigg], \mbox{ for } \tau < \frac{1}{\sqrt{2}}\nonumber\\
  I_{\frac{1}{2}}  &\leqslant \tau\bigg[\frac{e^{\tau^2y}}{K\tau}+2Me^{\tau y}\big(\frac{1}{\tau}-\frac{1}{\sqrt{\tau}}\big)+2Me^{\frac{y}{2}}\big(\frac{1}{\sqrt{\tau}} - \sqrt{2} \big)+ \frac{2M\sqrt{2}}{y}\big(e^{y} - e^{\frac{y}{2}}\big)\bigg], \mbox{ for } \tau < \frac{1}{2}\nonumber\\
  I_{\frac{3}{2}} &\leqslant M\tau\bigg[e^{\tau^2y}\tau +2e^{\frac{y}{2}}\big(\frac{1}{\sqrt{2}}-\tau \big)+\frac{\sqrt{2}}{y}\big(e^{y}-e^{\frac{y}{2}}\big)\bigg], \mbox{ for } \tau < \frac{1}{\sqrt{2}}\nonumber\\
  I_{\frac{1}{2}} &\geqslant L(1)\tau\bigg[e^{\tau^2y}\big(\frac{1}{\tau}-\frac{1}{\sqrt{\tau}}\big)+\frac{\sqrt{2}}{y}\big(e^{y}-e^{\tau y}\big)+\frac{1}{2y}\big(e^{y}-e^{\frac{y}{2}}\big)\bigg], \mbox{ for } \tau < \frac{1}{2}.\nonumber
 \end{align*}
 \end{lem}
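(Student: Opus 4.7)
The plan is to begin with the change of variables $v = t\tau^2/(1+t\tau^2)$, equivalently $t = v/(\tau^2(1-v))$, which converts each of the three integrals into the uniform form
\[
I_k \;=\; \tau \int_0^1 v^{k-2}\, L\!\left(\frac{v}{\tau^2(1-v)}\right)\, e^{vy}\, dv.
\]
The exponent of $e$ now depends linearly on $v$, the polynomial part becomes $v^{k-2}$ (i.e.\ $v^{1/2}$, $v^{-1/2}$, $v^{-3/2}$ for $I_{5/2},\,I_{3/2},\,I_{1/2}$), and the argument $v/(\tau^2(1-v))$ of $L$ is non-decreasing in $v$ and exceeds $1$ once $v\geq \tau^2/(1+\tau^2)$. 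Hence, because $L$ is assumed non-decreasing on $(0,\infty)$, one immediately has $L(\cdot)\geq L(1)$ on $[\tau^2,1]$, while globally $L(\cdot)\leq M$.

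With this rewrite in hand, all four bounds follow from a common recipe. First I would split the unit interval at the breakpoints $\tau^2,\,\tau,\,1/2$ (the hypotheses $\tau<1/2$ or $\tau<1/\sqrt{2}$ simply ensure these points are ordered). On each subinterval I would replace $L$ by its best constant lower bound $L(1)$ (for the two lower bounds on $I_{5/2}$ and $I_{1/2}$) or by $M$ (for the two upper bounds on $I_{1/2}$ and $I_{3/2}$), and replace $e^{vy}$ by its uniform lower or upper bound on that piece. What remains are the elementary integrals $\int v^{k-2}\,dv$ and $\int e^{vy}\,dv$, which evaluate in closed form and produce exactly the expressions $e^{y/2}-e^{\tau^2 y}$, $e^{y}-e^{y/2}$, $e^{y}-e^{\tau y}$, together with the factors $1/\tau-1/\sqrt{\tau}$, $1/\sqrt{\tau}-\sqrt{2}$, $\sqrt{2}/y$, etc.\ appearing in the four claimed inequalities. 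The $I_{5/2}$ lower bound uses only a two-piece split at $1/2$ (discarding the negligible interval $[0,\tau^2]$), as does the $I_{3/2}$ upper bound; both bounds on $I_{1/2}$ require the finer three-piece split at $\tau^2,\tau,1/2$ to tame the stronger singularity $v^{-3/2}$.

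The step I expect to be the main obstacle is the piece $[0,\tau^2]$ in the upper bound for $I_{1/2}$: there $v^{-3/2}$ is not integrable near $0$, so the recipe of replacing $L$ by the constant $M$ breaks down. The fix is to undo the substitution on that initial piece and write
\[
\int_0^{\tau^2} v^{-3/2}\, L\!\left(\frac{v}{\tau^2(1-v)}\right) dv \;=\; \frac{1}{\tau}\int_0^{1/(1-\tau^2)} \frac{s^{-3/2}}{\sqrt{1+s\tau^2}}\, L(s)\, ds \;\leq\; \frac{1}{\tau}\int_0^\infty s^{-3/2} L(s)\, ds \;=\; \frac{1}{K\tau},
\]
the last equality being the normalization $\int_0^\infty t^{-3/2}L(t)\,dt = K^{-1}$ built into (\ref{LAMBDA_PRIOR}) at $a=1/2$. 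Combined with the uniform bound $e^{vy}\leq e^{\tau^2 y}$ on $[0,\tau^2]$, this produces the $e^{\tau^2 y}/(K\tau)$ term of the claim. Once this piece is handled this way, assembling the contributions from all sub-intervals of all four splits and simplifying delivers the four stated inequalities.
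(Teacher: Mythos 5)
Your proposal is correct and follows essentially the same route as the paper: the same substitution $u=t\tau^2/(1+t\tau^2)$ reducing each $I_k$ to $\tau\int_0^1 u^{k-2}L\big(\tfrac{1}{\tau^2}\tfrac{u}{1-u}\big)e^{uy}\,du$, the same use of monotonicity ($L\geqslant L(1)$ once the argument exceeds $1$, i.e.\ on $[\tau^2,1]$) and boundedness ($L\leqslant M$), and---crucially---the same treatment of the non-integrable piece $[0,\tau^2]$ in the upper bound for $I_{1/2}$ by undoing the substitution and invoking the normalization $\int_0^\infty t^{-3/2}L(t)\,dt=K^{-1}$, which is exactly how the paper produces the $e^{\tau^2 y}/(K\tau)$ term. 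The only difference is presentational: the paper outsources the remaining elementary integrals to the proof of Lemma A.1 of \citet{PKV2014}, whereas you carry out the same piecewise-constant bounding explicitly.
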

 \begin{proof}
 Note that since $L$ is nondecreasing over $(0,\infty)$ and $0<\tau^2<1$, for each fixed $\tau \in (0,1)$ we have, $L(t) \geqslant L(1)$ for all $t\geqslant \frac{1}{1-\tau^2}> 1$. Therefore,
 \begin{eqnarray}
  I_{\frac{5}{2}}
  &=& \int_{0}^{\infty} \frac{(t\tau^2)^2}{(1+t\tau^2)^{5/2}}t^{-3/2}L(t)e^{\frac{t\tau^2}{1+t\tau^2}y}dt\nonumber\\
  &=&\frac{1}{\tau}\int_{0}^{\infty}\big(\frac{t\tau^2}{1+t\tau^2×}\big)^{5/2}t^{-2}L(t)e^{\frac{t\tau^2}{1+t\tau^2}y}dt\nonumber\\
  &\geqslant& \frac{L(1)}{\tau}\int_{\frac{1}{1-\tau^2}}^{\infty}\big(\frac{t\tau^2}{1+t\tau^2×}\big)^{5/2}t^{-2}e^{\frac{t\tau^2}{1+t\tau^2}y}dt\label{INEQ_1}
 \end{eqnarray}
 Now putting $u=\frac{t\tau^2}{1+t\tau^2}$ in (\ref{INEQ_1}) we obtain,
 \begin{align*}
  I_{\frac{5}{2}}
  &\geqslant L(1)\tau \int_{\tau^2}^{1}u^{1/2}e^{uy}du\nonumber\\
  &=L(1)\tau\bigg[\frac{\tau}{y}\big(e^{y/2} - e^{\tau^2y}\big)+\frac{1}{\sqrt{2}y}\big(e^{y}-e^{y/2}\big) \bigg], \mbox{ for } \tau < \frac{1}{\sqrt{2}}\nonumber
 \end{align*}
where the last equality follows using the proof of Lemma A.1 of \citet{PKV2014}.\newline
  
Next observe that
\begin{eqnarray}
 I_{\frac{1}{2}}
 &=& \int_{0}^{\infty} \frac{1}{(1+t\tau^2)^{1/2}}t^{-3/2}L(t)e^{\frac{t\tau^2}{1+t\tau^2}y}dt\nonumber\\
 &=& \frac{1}{\tau}\int_{0}^{\infty}\big(\frac{t\tau^2}{1+t\tau^2}\big)^{1/2}t^{-2}L(t)e^{\frac{t\tau^2}{1+t\tau^2}y}dt\nonumber\\
 &=& \tau \int_{0}^{1}u^{-3/2}L\big(\frac{1}{\tau^2}\frac{u}{1-u}\big)e^{uy}du\mbox{ }[\mbox{putting }u=\frac{t\tau^2}{1+t\tau^2}] \nonumber\\
 &=& \tau \bigg[\int_{0}^{\tau^2}u^{-3/2}L\big(\frac{1}{\tau^2}\frac{u}{1-u}\big)e^{uy}du+\int_{\tau^2}^{1}u^{-3/2}L\big(\frac{1}{\tau^2}\frac{u}{1-u}\big)e^{uy}du\bigg]\label{INEQ_2}
\end{eqnarray}
Now observe that $e^{uy} \leqslant e^{\tau^2 y}$ for all $u \leqslant \tau^2$. Using this fact and applying the change of variable $t=\frac{1}{\tau^2}\frac{u}{1-u}$ in the first integral on the right hand side of (\ref{INEQ_2}) we obtain,
\begin{eqnarray}
 \int_{0}^{\tau^2}u^{-3/2}L\big(\frac{1}{\tau^2}\frac{u}{1-u}\big)e^{uy}du
 &\leqslant& e^{\tau^2 y} \int_{0}^{\tau^2}u^{-3/2}L\big(\frac{1}{\tau^2}\frac{u}{1-u}\big)du\nonumber\\
 &=& \frac{e^{\tau^2 y}}{\tau} \int_{0}^{\frac{1}{1-\tau^2}}\frac{t^{-3/2}}{\sqrt{1+t\tau^2}}L(t)dt\nonumber\\
 &\leqslant& \frac{e^{\tau^2 y}}{\tau} \int_{0}^{\infty}t^{-3/2}L(t)dt \mbox{ } [\mbox{since }\frac{1}{\sqrt{1+t\tau^2}} \leqslant 1]\nonumber\\
 &=& \frac{K^{-1}e^{\tau^2 y}}{\tau}\label{INEQ_3}
\end{eqnarray}
For the second integral on the right hand side of (\ref{INEQ_2}), we observe that the function $L$ is bounded by the constant $M > 0$. Using this observation and then apply the same arguments given in the proof of Lemma A.1 of \citet{PKV2014}, we obtain,
\begin{equation}
 \int_{\tau^2}^{1}u^{-3/2}L\big(\frac{1}{\tau^2}\frac{u}{1-u}\big)e^{uy}du
\leqslant 2M \bigg[e^{\tau y}\big(\frac{1}{\tau}-\frac{1}{\sqrt{\tau}}\big)+ e^{\frac{y}{2}}\big(\frac{1}{\sqrt{\tau}} - \sqrt{2}\big) + \frac{\sqrt{2}}{y}\big(e^{y} - e^{\frac{y}{2}}\big)\bigg] \mbox{} \mbox{ for } \tau < \frac{1}{2}\label{INEQ_4}.
\end{equation}
(\ref{INEQ_2}), (\ref{INEQ_3}) and (\ref{INEQ_4}) together immediately give the stated upper bound on $I_{\frac{1}{2}}$.\newline

Again note that since $ \tau^2 < u < 1$ we have $\frac{1}{\tau^2}\frac{u}{1-u} > \frac{1}{1-\tau^2}>1$. Hence $L\big(\frac{1}{\tau^2}\frac{u}{1-u}\big) \geqslant L\big(\frac{1}{1-\tau^2}\big) \geqslant L(1) $ as $L$ is nondecreasing. Using this observation and (\ref{INEQ_2}) and then applying the same reasoning given in the proof of Lemma A.1 of \citet{PKV2014}, we obtain,
\begin{eqnarray}
 I_{\frac{1}{2}}
 &\geqslant& \tau \int_{\tau^2}^{1}u^{-3/2}L\big(\frac{1}{\tau^2}\frac{u}{1-u}\big)e^{uy}du\nonumber\\
 &\geqslant& L(1)\tau \int_{\tau^2}^{1}u^{-3/2}e^{uy}du\nonumber\\
 &=& L(1)\tau\bigg[e^{\tau^2y}\big(\frac{1}{\tau}-\frac{1}{\sqrt{\tau}}\big)+\frac{\sqrt{2}}{y}\big(e^{y}-e^{\tau y}\big)+\frac{1}{2y}\big(e^{y}-e^{\frac{y}{2}}\big)\bigg], \mbox{ for } \tau < \frac{1}{2}.\nonumber
\end{eqnarray}
The stated upper bound for $I_{\frac{3}{2}}$ follows immediately by noting that $L$ is bounded by the constant $M > 0$ and subsequently by change the variable $u=t\tau^2/(1+t\tau^2)$ followed by the arguments used in the proof of Lemma A.1 of \citet{PKV2014}, thereby completing the proof of Lemma \ref{LEMMA_POST_VAR_LOWER_BOUND}.
 \end{proof}

 \paragraph{Proof of Theorem \ref{THM_3.4}}
 \begin{proof}
  From (\ref{VAR_IDNT_2}) we have,
 \begin{eqnarray}
   Var(\theta|x)
  &\geqslant& x^2E\bigg[(1-\kappa)^2 | x,\tau\bigg] - x^2E^2\bigg[(1-\kappa) | x,\tau\bigg] \nonumber\\
  &=& x^2 \bigg[\frac{\int_{0}^{\infty}\frac{(t\tau^2)^2}{(1+t\tau^2)^{5/2}}t^{-3/2}L(t)e^{-\frac{x^2}{2(1+t\tau^2)}}dt}{\int_{0}^{\infty}\frac{1}{(1+t\tau^2)^{1/2}}t^{-3/2}L(t)e^{-\frac{x^2}{2(1+t\tau^2)}}dt}
  - \bigg(\frac{\int_{0}^{\infty}\frac{t\tau^2}{(1+t\tau^2)^{3/2}}t^{-3/2}L(t)e^{-\frac{x^2}{2(1+t\tau^2)}}dt}{\int_{0}^{\infty}\frac{1}{(1+t\tau^2)^{1/2}}t^{-3/2}L(t)e^{-\frac{x^2}{2(1+t\tau^2)}}dt}\bigg)^2 \bigg]\nonumber\\
&=& 2y \bigg[\frac{I_{\frac{5}{2}}}{I_{\frac{1}{2}}} - \bigg(\frac{I_{\frac{3}{2}}}{I_{\frac{1}{2}}}\bigg)^2 \bigg]\nonumber
\end{eqnarray}
 by putting $y=\frac{x^2}{2}$ and rest of the proof follows by applying Lemma \ref{LEMMA_POST_VAR_LOWER_BOUND} and the same set of arguments given in the proof of Theorem 3.4 of \citet{PKV2014}.  
 \end{proof}
 
 \begin{lem}\label{MOST_IMP_LEMMA_ABOS}
Let us fix any $0 <\eta<1$ and any $0<\delta<1$. Consider the general class of shrinkage priors where the prior distribution for the local shrinkage parameters $\pi(\lambda_i^2)$ is given by (\ref{LAMBDA_PRIOR}) with $a\in (0,1)$ and the corresponding slowly varying component $L(\cdot)$ satisfies Assumption \ref{ASSUMPTION_LAMBDA_PRIOR}. Then for each fixed $x\in\mathbb{R}$ and every fixed $0<\tau<1$, the corresponding posterior shrinkage coefficients $E(\kappa|x,\tau)$  can be bounded above by a real valued function $g(x,\tau)$, depending on $\eta$ and $\delta$, and is given by,
  \begin{equation}\label{g_definition}
  \mathop{g(x,\tau)=} \left\{
  \begin{array}{ll}
 C_{\textasteriskcentered} \big[\mid x^2 \int_{0}^{s x^2}e^{-u/2}u^{a+1/2-1}du \mid\big]^{-1} + \frac{H(a,\eta,\delta)e^{-\frac{\eta(1-\delta) x^2}{2}}}{\tau^{2a}\Delta(\tau^2,\eta,\delta)}, & \mbox{if}\; |x| > 0 ,\\ \\
  1, & \mbox{if}\; x=0,
  \end{array}\right.
  \end{equation}
for some global constant $C_{\textasteriskcentered} \equiv C_{\textasteriskcentered}(a,\eta,L) > 0$ which is independent of both $x$ and $\tau$. The function $g(x,\tau)$ defined in (\ref{g_definition}) above satisfies the following:\newline
 
 Given any $\zeta > \frac{2}{\eta(1-\delta)}$,
\begin{eqnarray}
 \lim_{\tau \downarrow 0}\sup_{|x| > \sqrt{\zeta \log(\frac{1}{\tau^{2a}})}} g(x,\tau) = 0. \nonumber
 \end{eqnarray}
\end{lem}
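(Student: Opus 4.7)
My plan is to mirror the proof of Lemma~\ref{LEMMA_MOST_IMP} (which is already in the paper) almost verbatim, the only difference being that here we want to bound $E(\kappa|x,\tau)$ rather than $|x|\,E(\kappa|x,\tau)=|T_\tau(x)-x|$. Since $\kappa\in(0,1)$, we trivially have $E(\kappa|x,\tau)\leq 1$, which handles the case $x=0$ and also shows $g$ is a legitimate upper bound there. For $|x|>0$, I will rewrite $E(\kappa|x,\tau)$ as the ratio of integrals
\[
E(\kappa|x,\tau)=\frac{\int_{0}^{1}\kappa^{a+1/2}(1-\kappa)^{-a-1}L\!\bigl(\tfrac{1}{\tau^2}(\tfrac{1}{\kappa}-1)\bigr)e^{-\kappa x^{2}/2}\,d\kappa}{\int_{0}^{1}\kappa^{a-1/2}(1-\kappa)^{-a-1}L\!\bigl(\tfrac{1}{\tau^2}(\tfrac{1}{\kappa}-1)\bigr)e^{-\kappa x^{2}/2}\,d\kappa},
\]
split the numerator as $\{\kappa<\eta\}+\{\kappa>\eta\}$, and treat the two pieces separately, exactly as in the proof of Lemma~\ref{LEMMA_MOST_IMP}.

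For the $\{\kappa>\eta\}$ piece, I bound $E(\kappa 1\{\kappa>\eta\}|x,\tau)\leq \Pr(\kappa>\eta|x,\tau)$ and apply Lemma~\ref{LEM_CONCENTRATION_INEQ} directly to obtain the second summand of $g(x,\tau)$, namely $H(a,\eta,\delta)e^{-\eta(1-\delta)x^{2}/2}/(\tau^{2a}\Delta(\tau^{2},\eta,\delta))$. For the $\{\kappa<\eta\}$ piece, I perform the same two substitutions used in Lemma~\ref{LEMMA_MOST_IMP}: first $t=(1/\tau^{2})(1/\kappa-1)$, and then $u=x^{2}/(1+t\tau^{2})$. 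Restricting the denominator's $t$-integral to $[t_{0}/\tau^{2},\infty)$ (where $L\geq c_{0}$ by Assumption~\ref{ASSUMPTION_LAMBDA_PRIOR}) and using $L\leq M$ in the numerator, followed by the bounds $(1-u/x^{2})^{-a-1}\leq(1-\eta)^{-a-1}$ on the numerator's range and the crude $(1-u/x^{2})^{-a-1}\geq 1$ on the denominator's range, I obtain
\[
E(\kappa 1\{\kappa<\eta\}|x,\tau)\;\leq\;\frac{C_{\star}}{x^{2}\int_{0}^{sx^{2}}e^{-u/2}u^{a-1/2}\,du},
\]
with $s=1/(1+t_{0})\in(0,1)$ and $C_{\star}\equiv C_{\star}(a,\eta,L)>0$ a constant independent of $x$ and $\tau$, exactly matching the first summand of $g(x,\tau)$ in (\ref{g_definition}). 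This is precisely the argument in (\ref{IMP_DECOMPSOSITION_1_1})--(\ref{IMP_DECOMPSOSITION_1_2}) divided by $|x|$.

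For the limiting statement, write $\zeta_{\tau}=\sqrt{\zeta\log(1/\tau^{2a})}$. The first summand of $g$ is strictly decreasing in $|x|$, so its supremum over $|x|>\zeta_{\tau}$ equals $C_{\star}/\bigl(\zeta_{\tau}^{2}\int_{0}^{s\zeta_{\tau}^{2}}e^{-u/2}u^{a-1/2}\,du\bigr)$; since $\zeta_{\tau}\to\infty$ as $\tau\downarrow 0$ and $\int_{0}^{\infty}e^{-u/2}u^{a-1/2}\,du$ is a finite positive constant, this quantity tends to $0$. The second summand is eventually decreasing in $|x|$, so for all sufficiently small $\tau$ its supremum is at most
\[
\frac{H(a,\eta,\delta)}{\Delta(\tau^{2},\eta,\delta)}\bigl(\tau^{2a}\bigr)^{\eta(1-\delta)\zeta/2-1}.
\]
Since $\Delta(\tau^{2},\eta,\delta)$ tends to a finite positive limit (Lemma~\ref{LEM_CONCENTRATION_INEQ}), the exponent $\eta(1-\delta)\zeta/2-1$ is strictly positive exactly because $\zeta>2/(\eta(1-\delta))$, so this term also vanishes as $\tau\downarrow 0$. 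Adding the two limits gives the claim.

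This is essentially a bookkeeping exercise piggy-backing on Lemma~\ref{LEMMA_MOST_IMP} and Lemma~\ref{LEM_CONCENTRATION_INEQ}; I do not expect any genuine obstacle. The only delicate point is to verify that the choice of $\eta,\delta$ in the decomposition is the same as the one appearing in the statement, and to keep the thresholds $\eta$ inside $E(\kappa 1\{\kappa<\eta\}|x,\tau)$ consistent with the thresholds used inside $\Delta(\tau^{2},\eta,\delta)$, so that the constants $C_{\star}$ and $H(a,\eta,\delta)$ in $g$ match exactly those in (\ref{g_definition}).
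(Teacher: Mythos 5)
Your proposal is correct and takes essentially the same route as the paper: the paper's own proof of Lemma \ref{MOST_IMP_LEMMA_ABOS} is literally a one-line deferral to the argument of Lemma \ref{LEMMA_MOST_IMP}, and your write-up carries out exactly that adaptation (dropping the factor of $|x|$, which turns the $x$ in the denominator of the first summand into $x^2$ and removes the prefactor $|x|$ from the concentration-inequality term, so the threshold condition $\zeta>2/(\eta(1-\delta))$ goes through unchanged). The bookkeeping on the constants $C_{\textasteriskcentered}$, $H(a,\eta,\delta)$ and $\Delta(\tau^2,\eta,\delta)$ is consistent with (\ref{g_definition}).
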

\begin{proof}
 Proof follows using the same set of arguments as in the proof of Lemma \ref{LEMMA_MOST_IMP}.
\end{proof}
 
 \begin{remark}
 For each fixed $\tau$, the function $g(x,\tau)$ defined in (\ref{g_definition}) has exactly one point of discontinuity at the origin and hence it is measurable.
 \end{remark}

 \paragraph{Proof of Theorem \ref{THM_T1_UB_LB}}
 \begin{proof}
 First of all, it should be observed that, for each $i$, $X_i\sim N(0,1)$ under $H_{0i}$, and hence the corresponding type I error probability does not depend on $i$. Let $t_1$ denote the common value of $t_{1i}$'s, $i=1,\dots,n$. Hence, by definition,
 \begin{equation}\label{TYPE_I_DEFN}
  t_1=\Pr\big(E(1-\kappa_1|X_1,\tau) > 0.5\big|H_{01} \mbox{ is true}\big)
 \end{equation}

 Now, the upper bound for $t_1$ as given in the statement of Theorem \ref{THM_T1_UB_LB}, is a simple consequence of Theorem 6 of \citet{GTGC2015} with some appropriately chosen finite positive constant $H(a,\eta,\delta)$ which is independent of both $i$ and $n$. Hence we omit the proof. However, the corresponding proof for the lower bound of $t_1$ require some novel arguments as given below.\newline
 
 Let us fix any $\eta\in(0,1)$ and any $\delta\in(0,1)$. Then, by Lemma \ref{MOST_IMP_LEMMA_ABOS}, for each fixed $\tau < 1$, the function $E(\kappa|x,\tau)$ is bounded above by the function $g(x,\tau)$, where the function $g(\cdot,\tau)$ has already been defined in (\ref{g_definition}). Hence, for each fixed $\tau < 1$ and for every $\omega$ in the sample space, we have the following:
 \begin{eqnarray}\label{TYPE_I_ERROR_LB_EQ1}
 \bigg\{\omega:E(1-\kappa_1|X_1(\omega),\tau) > 0.5\bigg\}
 &=&\bigg\{\omega:E(\kappa_1|X_1(\omega),\tau) < 0.5\bigg\}\nonumber\\
 &\supseteq& \bigg\{\omega: g(X_1(\omega),\tau) < 0.5\bigg\} \equiv B_n^c, \mbox{ say,}
 \end{eqnarray}
 where $B_n\equiv \big\{\omega: g(X_1(\omega),\tau) \geqslant 0.5\big\}$.\newline
 
 Let us fix any $\zeta > \frac{2}{\eta(1-\delta)}$ and define the event $C_n$ as $ C_n\equiv\bigg\{\omega:|X_1(\omega)| > \sqrt{\zeta\log(\frac{1}{\tau^{2a}})}\bigg\}$.\newline
 
 Then using (\ref{TYPE_I_DEFN}) and (\ref{TYPE_I_ERROR_LB_EQ1}), we have,
 \begin{eqnarray}\label{TYPE_I_ERROR_LB_EQ2}
  t_1
  &=& \Pr(E(\kappa_1|X_1,\tau) < 0.5|H_{01} \mbox{ is true})\nonumber\\
  &\geqslant& \Pr(g(X_1,\tau)< 0.5|H_{01} \mbox{ is true})\nonumber\\
  &=& \Pr(B_n^c|H_{01} \mbox{ is true})\nonumber\\
  &\geqslant& \Pr(B_n^c \cap C_n | H_{01} \mbox{ is true})\nonumber\\
  &=& \Pr(B_n^c|C_n, H_{01} \mbox{ is true})\Pr(C_n|H_{01} \mbox{ is true})\nonumber\\
  &=& \big[1- \Pr(B_n|C_n, H_{01} \mbox{ is true})\big]\Pr(C_n|H_{01} \mbox{ is true}).
 \end{eqnarray}

  Recall that the function $g(x,\tau)$ is measurable, non-negative and is continuously decreasing in $|x|$ for $|x|\neq0$. Hence $E\big(g(X_1,\tau)||X_1| > \sqrt{\zeta\log(1/\tau^{2a})},H_{01} \mbox{ is true}\big)$ is well defined and is bounded for all sufficiently small $\tau\in(0,1)$. Using these facts and applying Markov's inequality, we obtain for all sufficiently $\tau<1$, the following:
 \begin{eqnarray}\label{TYPE_I_ERROR_LB_EQ3}
   \Pr(B_n\big|C_n, H_{01} \mbox{ is true})
  &=& \Pr\big(g(X_1,\tau) \geqslant 0.5\big||X_1|>\sqrt{\zeta\log(1/\tau^{2a})}, H_{01} \mbox{ is true}\big)\nonumber\\
  &\leqslant& 2E\big(g(X_1,\tau)\big||X_1|>\sqrt{\zeta\log(1/\tau^{2a})}, H_{01} \mbox{ is true}\big)\nonumber\\
  &\leqslant& 2\sup_{\mid x\mid>\sqrt{\zeta \log(\frac{1}{\tau^{2a}})}} g(x,\tau)\nonumber\\
  &\rightarrow& 0 \mbox{ as } \tau \rightarrow 0.\nonumber
     \end{eqnarray}
     
 Since $\tau\rightarrow 0$ as $n\rightarrow\infty$, we have,
 \begin{equation}
 \lim\limits_{n\rightarrow\infty} \Pr(B_n\big|C_n, H_{01} \mbox{ is true})=0,\nonumber
 \end{equation}
  whence it follows
  \begin{equation}
  \label{TYPE_I_ERROR_LB_EQ4}
  \Pr(B_n^c|C_n, H_{01} \mbox{ is true})= 1-o(1) \mbox{ as } n\rightarrow\infty.
 \end{equation}
 
 Again, noting that under $H_{01}$, $X_1\stackrel{d}{=}Z$, we have for all sufficiently small $\tau < 1$, the following:
 \begin{eqnarray}\label{TYPE_I_ERROR_LB_EQ5}
  \Pr(C_n|H_{01} \mbox{ is true})
  &=& \Pr\big(|Z| > \sqrt{\zeta\log(1/\tau^{2a})}\big)\nonumber\\
  &=& 2\Pr\big(Z > \sqrt{\zeta\log(1/\tau^{2a})}\big)\nonumber\\
  &\geqslant& 2\frac{\phi(\sqrt{\zeta\log(1/\tau^{2a})})}{\sqrt{\zeta\log(1/\tau^{2a})}}\big(1-\frac{1}{\zeta\log(1/\tau^{2a})}\big) [\mbox{using Mill's ratio}]\nonumber\\
  &\geqslant& 2\frac{\phi(\sqrt{\zeta\log(1/\tau^{2a})})}{\sqrt{\zeta\log(1/\tau^{2a})}}\big(1-\frac{1}{2\log(1/\tau^{2a})}\big) [\mbox{since } \zeta > 2 ]\nonumber\\
  &=& G(a,\eta,\delta)\frac{\big(\tau^{2a}\big)^{\frac{\zeta}{2}}}{\sqrt{\log(\frac{1}{\tau^2})}}(1+o(1))\mbox{ as } n\rightarrow\infty.
 \end{eqnarray}
 On combining (\ref{TYPE_I_ERROR_LB_EQ2}), (\ref{TYPE_I_ERROR_LB_EQ4}) and (\ref{TYPE_I_ERROR_LB_EQ5}), the stated result follows immediately. 
\end{proof}
 
 \paragraph{Proof of Theorem \ref{THM_T2_UB_LB}}
 \begin{proof}
  First observe that, for each $i$, $X_i \sim N(0,1+\psi^2)$ under $H_{Ai}$, and hence the corresponding type II error probability does not depend on $i$. Let $t_1$ denote the common value of $t_{2i}$'s, $i=1,\dots,n$. Hence, by definition,
 \begin{equation}\label{TYPE_I_DEFN}
  t_2=\Pr\big(E(1-\kappa_1|X_1,\tau) \leqslant 0.5\big|H_{A1} \mbox{ is true}\big)
 \end{equation}

 Now, the lower bound for $t_2$ as given in the statement of Theorem \ref{THM_T2_UB_LB}, is a simple consequence of Theorem 8 of \citet{GTGC2015}. Hence we skip the proof of this part. However, the corresponding proof for the upper bound of $t_2$ require some novel arguments as given below.\newline

 Let us fix any $\eta \in (0,1)$ and any $\delta \in (0,1)$. Choose any $\zeta > \frac{2}{\eta(1-\delta)}$. Then using Lemma \ref{MOST_IMP_LEMMA_ABOS}, it follows that, for each fixed $\tau < 1$, the function $E(\kappa|x,\tau)$ is bounded above by the function $g(x,\tau)$, where the function $g(\cdot,\tau)$ has already been defined in (\ref{g_definition}.) Hence, for each fixed $\tau < 1$, we have the following:
  \begin{eqnarray}\label{TYPE_II_UB_EQ1}
   t_2 &=& \Pr\big(E(\kappa_1|X_1,\tau) \geqslant 0.5\big|H_{A1} \mbox{ is true}\big)\nonumber\\
       &\leqslant& \Pr\big(g(X_1,\tau) \geqslant 0.5\big|H_{A1} \mbox{ is true}\big)\nonumber\\
       &=& \Pr(B_n\big|H_{A1} \mbox{ is true})\nonumber\\
       &=& \Pr(B_n \cap C_n\big|H_{A1} \mbox{ is true})+\Pr(B_n \cap C^c_n\big|H_{A1} \mbox{ is true})\nonumber\\
       &=& \Pr(B_n \big|C_n, H_{A1} \mbox{ is true})\Pr(C_n\big|H_{A1} \mbox{ is true})+\Pr(B_n \cap C^c_n\big|H_{A1} \mbox{ is true})\nonumber\\
       &\leqslant& \Pr(B_n\big|C_n, H_{A1} \mbox{ is true})+\Pr(C^c_n\big|H_{A1} \mbox{ is true}).
     \end{eqnarray}    
 where the events $B_n$ and $C_n$ are already defined in the proof of Theorem \ref{THM_T1_UB_LB}.\newline
     
     
Now applying the same reasoning as in the proof of Theorem \ref{THM_T1_UB_LB} it follows that 
 \begin{equation}
  \label{TYPE_II_UB_EQ2}
 \lim\limits_{n\rightarrow\infty} \Pr(B_n\big|C_n, H_{A1} \mbox{ is true})=0.
 \end{equation}
     
 Again, since $0<\lim_{n\rightarrow\infty}\frac{\tau}{p^{\alpha}}<\infty$, for $\alpha>0$, using condition (iii) of Assumption \ref{ASSUMPTION_ASYMP}, it follows that $\lim_{n\rightarrow\infty}\log(\frac{1}{\tau^2})/\psi^2=\alpha C$. Thus, for all sufficiently large $n$, we have the following:
  \begin{eqnarray}\label{TYPE_II_UB_EQ3}
   \Pr(C^c_n\big|H_{A1} \mbox{ is true})
   &=& \Pr\bigg(|X_1| \leqslant\sqrt{\zeta\log(\frac{1}{\tau^{2a}})} \big| H_{A1} \mbox{ is true}\bigg)\nonumber\\
   &=& \Pr\bigg(|Z| \leqslant\sqrt{\zeta a}\sqrt{\frac{\log(\frac{1}{\tau^2})}{1+\psi^2}}\bigg)\nonumber\\
   &=& \Pr\bigg(|Z| \leqslant\sqrt{\zeta a}\sqrt{\frac{\log(\frac{1}{\tau^2})}{\psi^2}}\big(1+o(1)\big)\bigg) \mbox{ as } n\rightarrow\infty\nonumber\\
   &=& \Pr\big(|Z| \leqslant\sqrt{\zeta a\alpha}\sqrt{C})(1+o(1))\big)\mbox{ as }n\rightarrow\infty \nonumber\\
   &=& \big[2\Phi(\sqrt{\zeta a\alpha}\sqrt{C})-1\big](1+o(1)) \mbox{ as }n\rightarrow\infty.
  \end{eqnarray}
 Combining (\ref{TYPE_II_UB_EQ1}), (\ref{TYPE_II_UB_EQ2}) and (\ref{TYPE_II_UB_EQ3}), it therefore follows that
 \begin{equation}
  t_2 \leqslant \big[2\Phi(\sqrt{\zeta a\alpha}\sqrt{C})-1\big](1+o(1)),\nonumber
 \end{equation}
 for all sufficiently large $n$, thereby completing the proof of Theorem \ref{THM_T2_UB_LB}.
  \end{proof}

\paragraph{Proof of Theorem \ref{THM_BAYES_RISK_EXACT_ASYMP_ORDER}}
\begin{proof}
Let us first fix any $\eta \in (0,1)$ and any $\delta \in (0,1)$. Then combining the results of Theorem \ref{THM_T1_UB_LB} and Theorem \ref{THM_T2_UB_LB}, together with (\ref{OPT_BAYES_RISK}) and the arguments employed for proving Theorem 1 of \citet{GTGC2015}, it follows easily that, that the Bayes risk $R_{OG}$ of the induced decisions, (\ref{INDUCED_DECISION}) based on the general class of one-group priors under study, with $a\in[0.5,1)$, satisfies the following:
  \begin{equation}\label{RATIO_RISK_OG_BOUNDS}
 \frac{2\Phi\big(\sqrt{2a\alpha}\sqrt{C}\big)-1}{2\Phi\big(\sqrt{C}\big)-1}\big(1+o(1)\big) \leqslant  \frac{R_{OG}}{R^{BO}_{Opt}} \leqslant \frac{2\Phi\big(\sqrt{\zeta a\alpha}\sqrt{C}\big)-1}{2\Phi\big(\sqrt{C}\big)-1}\big(1+o(1)\big)\mbox{ as } n\rightarrow\infty,
  \end{equation}
  for any arbitrary but fixed $\zeta > \frac{2}{\eta(1-\delta)}$, where the $o(1)$ terms depend on choices of $\zeta$, $\eta$ and $\delta$ only. Then taking limit inferior and limit superior in (\ref{RATIO_RISK_OG_BOUNDS}) as $n\rightarrow\infty$, it follows
  \begin{equation}\label{RATIO_RISK_LIMINF_LIMSUP}
  \frac{2\Phi\big(\sqrt{2a\alpha}\sqrt{C}\big)-1}{2\Phi\big(\sqrt{C}\big)-1}
  \leqslant \liminf_{n\rightarrow\infty}\frac{R_{OG}}{R_{Opt}^{BO}}
  \leqslant \limsup_{n\rightarrow\infty}\frac{R_{OG}}{R_{Opt}^{BO}}
  \leqslant \frac{2\Phi\big(\sqrt{\zeta a\alpha}\sqrt{C}\big)-1}{2\Phi\big(\sqrt{C}\big)-1}
 \end{equation}
 for each fixed $\eta \in (0,1)$ and each fixed $\delta \in (0,1)$ and for any $\zeta > 2/(\eta(1-\delta))$.\newline
 
 Now observe that the multiple testing rules under study do not depend on how $\eta\in (0,1)$, $\delta\in(0,1)$ and $\zeta > 2/(\eta(1-\delta))$ are chosen. Hence the ratio $R_{OG}/R_{Opt}^{BO}$ is free of any $\eta, \delta \in (0,1)$ and any $\zeta > 2/(\eta(1-\delta))$, for all $n\geqslant 1$. Thus, the limit inferior and the limit superior terms in (\ref{RATIO_RISK_LIMINF_LIMSUP}) are also independent of the choices of $\eta$, $\delta$ and $\zeta$. But $\zeta>2/(\eta(1-\delta))$ in (\ref{RATIO_RISK_LIMINF_LIMSUP}) is arbitrary. Therefore, taking infimum over all such $\zeta$'s in (\ref{RATIO_RISK_LIMINF_LIMSUP}) and using the continuity of $\Phi(\cdot)$, we obtain,
  \begin{eqnarray}
  \frac{2\Phi\big(\sqrt{2a\alpha}\sqrt{C}\big)-1}{2\Phi\big(\sqrt{C}\big)-1}
   \leqslant \liminf_{n\rightarrow\infty}\frac{R_{OG}}{R_{Opt}^{BO}}
   \leqslant \limsup_{n\rightarrow\infty}\frac{R_{OG}}{R_{Opt}^{BO}}
   \leqslant\frac{2\Phi\big(\sqrt{\frac{2 a\alpha}{\eta(1-\delta)}}\sqrt{C}\big)-1}{2\Phi\big(\sqrt{C}\big)-1}\label{RATIO_RISK_INF_ZETA}
  \end{eqnarray}
  
  Once again (\ref{RATIO_RISK_INF_ZETA}) holds for every fixed $\eta \in (0,1)$ and every fixed $\delta \in (0,1)$. Hence, using the preceding discussion and taking infimum in (\ref{RATIO_RISK_INF_ZETA}) over all possible choices of $(\eta, \delta) \in (0,1)\times(0,1)$ and using the continuity of $\Phi(\cdot)$, we finally obtain
  \begin{eqnarray}
  \frac{2\Phi\big(\sqrt{2a\alpha}\sqrt{C}\big)-1}{2\Phi\big(\sqrt{C}\big)-1}
   \leqslant \liminf_{n\rightarrow\infty}\frac{R_{OG}}{R_{Opt}^{BO}}
   \leqslant \limsup_{n\rightarrow\infty}\frac{R_{OG}}{R_{Opt}^{BO}}
   \leqslant\frac{2\Phi\big(\sqrt{2a\alpha}\sqrt{C}\big)-1}{2\Phi\big(\sqrt{C}\big)-1}\nonumber
  \end{eqnarray}
whence we have
 \begin{align}
   \lim_{n\rightarrow\infty}\frac{R_{OG}}{R_{Opt}^{BO}}=\frac{2\Phi\big(\sqrt{2a\alpha}\sqrt{C}\big)-1}{2\Phi\big(\sqrt{C}\big)-1}.\nonumber
 \end{align}
That $\lim\limits_{n\rightarrow\infty}\frac{R_{OG}}{R_{Opt}^{BO}}=1$ for $a=0.5$ and $\alpha=1$ is obvious. This completes the proof of Theorem \ref{THM_BAYES_RISK_EXACT_ASYMP_ORDER}.
\end{proof}
 
\section*{Acknowledgements}
 The authors would like to thank Professor Jayanta Kumar Ghosh for letting them aware about the work of \citet{PKV2014} on the posterior contraction properties of the horseshoe prior.

\bibliographystyle{apalike}
\bibliography{reference_contraction}

\end{document}